\newtheorem{theorem}{Theorem}[section]
\newtheorem{lemma}[theorem]{Lemma}
\newtheorem{dfn}[theorem]{Definition}
\newtheorem{Proposition}[theorem]{Proposition}
\newtheorem{cor}[theorem]{Corollary}
\newtheorem{Remark}[theorem]{Remark}
\numberwithin{equation}{section}
\providecommand{\abs}[1]{\left\vert#1\right\vert}
\providecommand{\norm}[1]{\left\Vert#1\right\Vert}
\providecommand{\Rn}[1]{\mathbb{R}^{#1}}
\providecommand{\br}[1]{\left\langle #1 \right\rangle}
\providecommand{\ns}[1]{\norm{#1}^2}
\def\nab{\nabla}
\def\dt{\partial_t}
\def\hal{\frac{1}{2}}
\def\ls{\lesssim}
\def\p{\partial}
\def\sg{\mathbb{D}}
\def\da{\Delta_{\mathcal{A}}}
\def\naba{\nab_{\mathcal{A}}}
\def\diva{\diverge_{\mathcal{A}}}
\def\H1{{_0}H^1(\Omega)}
\def\a{\mathcal{A}}
\def\n{\mathcal{N}}
\def\R{\mathbb{R}}
\DeclareMathOperator{\diverge}{div}
\providecommand{\Rn}[1]{\mathbb{R}^{#1}}
\providecommand{\norm}[1]{\left\Vert#1\right\Vert}
\title[Decay of passive scalars]{Passive scalars, moving boundaries, and Newton's law of cooling}
\author{Juhi Jang}
\address{
Department of Mathematics\\
University of California, Riverside\\
Riverside, CA 92521, USA
}
\email[J. Jang]{juhijang@math.ucr.edu}
\thanks{J. Jang was supported in part by NSF grant DMS-1212142}
\author{Ian Tice}
\address{
Department of Mathematical Sciences\\
Carnegie Mellon University\\
Pittsburgh, PA 15213, USA
}
\email[I. Tice]{iantice@andrew.cmu.edu}
\subjclass[2010]{Primary 35Q35, 35B40, 35A23; Secondary 35A23, 35Q30, 58J65}
\keywords{Passive scalars, equilibration rates, free boundary problems, Newton's law of cooling, energy estimates, calculus of variations}
\begin{document}

\begin{abstract}
We study the evolution of passive scalars in both rigid and moving slab-like domains, in both horizontally periodic and infinite contexts.  The scalar is required to satisfy Robin-type boundary conditions corresponding to Newton's law of cooling, which lead to nontrivial equilibrium configurations.  We study the equilibration rate of the passive scalar in terms of the parameters in the boundary condition and the equilibration rates of the background velocity field and moving domain.

\end{abstract}

\maketitle

\section{Introduction and motivation}

In this paper we study the passive scalar equation
\begin{equation}\label{passive_s}
\partial_t \theta + u \!\cdot\! \nabla \theta = \kappa \Delta \theta,
\end{equation}
where $\theta=\theta(t,x) \in \mathbb{R}$ measures some  scalar quantity at time $t\ge 0$ and position $x\in \Omega(t)$, $u(t,x)\in \mathbb{R}^3$ is a given divergence-free velocity field, and $\kappa > 0$ is the diffusivity.  Here $\Omega(t)$ is a three-dimensional open domain that may depend on time.  A full specification of $\Omega(t)$ will be provided later.   The scalar $\theta$ is   understood to be passive in the sense that $u$ affects the dynamics of $\theta$ through \eqref{passive_s}, but $\theta$ plays no role in the dynamics of $u$: it is passive.  We shall think of $u$ as the velocity field of an incompressible fluid evolving in the (possibly moving) domain $\Omega(t)$ and $\theta$ as the temperature of the fluid.

Passive scalars are a popular model of turbulent diffusion (see the exhaustive survey of Majda-Kramer \cite{majda_kramer} and references therein),  mixing phenomena (see for instance Lin-Thiffeault-Doering \cite{ltd} or Thiffeault \cite{thiffeault}), as well as pollution and combustion (see Warhaft \cite{warhaft} and references therein).  If we consider \eqref{passive_s} on $\mathbb{T}^3$, then, as we will show below,  the solution $\theta$ will converge exponentially as $t \to \infty$ to its average, which is an equilibrium solution of \eqref{passive_s}.  An interesting  result of Constantin-Kiselev-Ryzhik-Zlato\v{s} \cite{ckrz} shows that certain flows $u$ can actually enhance this convergence to equilibrium.  This was extended to time-periodic flows by Kiselev-Shternberg-Zlato\v{s}  \cite{ksz} and to flows on  $\Rn{2}$ by Zlato\v{s} \cite{zlatos} (in which case the equilibrium is $0$).

We aim to study the temporal decay properties of the scalar $\theta$ in terms of the decay properties of the given velocity $u$, the asymptotic behavior of $\Omega(t)$, and the the boundary conditions imposed on $\theta$ and $u$.  We investigate the decay rate of $\theta$ for long times and also show that the decay leads to global-in-time solutions for arbitrary data.

\subsection{Periodic boundary conditions}

To motivate our analysis let's first consider a relatively trivial example,  when $\Omega$ is taken to be a periodic box $\mathbb{T}^3 = \mathbb{R}^3 / \mathbb{Z}^3$ and $u(t,x)$ is a divergence-free, periodic, smooth vector field.  In this case, by using the periodicity and the incompressibility condition (i.e.  $\diverge u=0$), it is easy to see that a solution $\theta$ to \eqref{passive_s} obeys the $L^2$-energy law
\begin{equation}\label{energy_0}  
\frac12\frac{d}{dt}\int_{\mathbb{T}^3} \abs{\theta}^2 dx + \kappa \int_{\mathbb{T}^3} |\nabla\theta|^2 dx=0. 
\end{equation}
Similarly, we may integrate \eqref{passive_s} to find that
\begin{equation}
\frac{d}{dt}\int_{\mathbb{T}^3} \theta dx=0. 
\end{equation}
Thus  
\begin{equation}
\theta_{ave} =\frac{1}{| \mathbb{T}^3 |} \int_{\mathbb{T}^3} \theta dx = \int_{\mathbb{T}^3} \theta dx
\end{equation}
is a constant determined by the initial average of the temperature.  Since $\theta - \theta_{ave}$ also solves \eqref{passive_s} with periodic boundary conditions, we also know that
\begin{equation}  \label{energy_1} 
\frac12\frac{d}{dt}\int_{\mathbb{T}^3} \abs{ \theta-\theta_{ave}}^2 dx + \kappa \int_{\mathbb{T}^3} |\nabla\theta|^2 dx=0. 
\end{equation}
We now recall the (sharp) Poincar\'{e} inequality in $\mathbb{T}^3$: 
\begin{equation}\label{torus_poincare}
\int_{\mathbb{T}^3} \abs{ \theta-\theta_{ave}}^2 dx \leq \frac{1}{4\pi^2} \int_{\mathbb{T}^3} |\nabla\theta|^2 dx. 
\end{equation}
Hence, by combining  \eqref{energy_1} and \eqref{torus_poincare}, we have 
\begin{equation}
\frac12\frac{d}{dt}\int_{\mathbb{T}^3} \abs{ \theta-\theta_{ave}}^2 dx + 4\pi^2 \kappa  \int_{\mathbb{T}^3} \abs{ \theta-\theta_{ave}}^2 dx \leq 0,  
\end{equation}
which in turn implies that
\begin{equation}\label{periodic_decay}
\| \theta(t,\cdot)-\theta_{ave} \|_{L^2}\leq e^{-4\pi^2 \kappa t }  \| \theta(0,\cdot)-\theta_{ave} \|_{L^2}. 
\end{equation}
Therefore, we deduce that $\theta$ converges exponentially fast to $\theta_{ave}$.  Since the inequality \eqref{torus_poincare} is sharp, this decay result is optimal.

We would like to highlight three important features of this decay analysis.  First, the periodic boundary conditions on $\theta$ play a role in the decay through the derivation of \eqref{energy_1}.  Second, the divergence-free condition and the periodic boundary conditions on $u$ also play a role in the derivation of \eqref{energy_1}; in this particular setting $u$ actually disappears entirely from \eqref{energy_1} and has no ultimate impact on the decay rate in \eqref{periodic_decay}.  Third, the fact that $\Omega(t) = \mathbb{T}^3$ is static in time gives rise to a static constant in the Poincar\'{e} inequality \eqref{torus_poincare}, which in turn plays a serious role in determining the decay rate in \eqref{periodic_decay}.  Indeed, the Poincar\'e constant and the diffusion constant $\kappa$ both appear directly in the decay rate.

Throughout the rest of this paper we will investigate how changes in these three features affect the decay of $\theta$.  We will consider both static (and rigid) domains as well as moving domains with a range of boundary conditions for $\theta$ and $u$.    In the rest of the section we will give concrete examples of couplings between $u$ and $\theta$ in order to provide motivation for our main results.  However, in the main results of Section \ref{sec_main}, we will consider a more general and weaker coupling between $u$ and $\theta$.

\subsection{Rigid boundary}

We consider a fluid confined between two rigid horizontal plates located at $x_3=0$ and $x_3=-d$ for $d >0$.   We allow the horizontal cross-section to be either infinite, in which case we define $\Gamma = \mathbb{R}^2$, or else periodic, in which case we set $\Gamma = (L_1 \mathbb{T}) \times (L_2 \mathbb{T})$, where $L_1,L_2 >0$ are the periodicity lengths and $L \mathbb{T} = \R / (L \mathbb{Z})$.  Then the rigid domain is
\begin{equation}\label{omega_def}
 \Omega = \Gamma \times (-d,0).
\end{equation}
We shall write 
\begin{equation}\label{sigma_def}
 \Sigma_+ = \{x_3 = 0\} \text{ and } \Sigma_- = \{x_3 = -d\}
\end{equation}
to denote the top and bottom boundaries of $\Omega$.

We assume that the fluid obeys the gravity-driven incompressible Navier-Stokes equations
\begin{equation}\label{rigid_ns}
\begin{cases}
 \partial_t u + u \!\cdot \!\nabla u + \nabla p =\mu \Delta u - g e_3 & \text{in } \Omega \\
 \diverge u = 0 & \text{in } \Omega \\
 u=0 & \text{on } \Sigma_+ \text{ and } \Sigma_-.
 \end{cases}
\end{equation}
Here $\mu>0$ is the fluid viscosity, $g >0$ is the gravitational strength, and $e_3 = (0,0,1) \in \R^3$.   Actually, the gravitational force plays no real role in \eqref{rigid_ns} since we may absorb this conservative force into the pressure by swapping $p \mapsto p + g x_3$.  We have included gravity here simply to maintain a similarity to the moving boundary case, where gravity does play a role.  

We assume that the temperature of the fluid satisfies the passive scalar equation  \eqref{passive_s} in $\Omega$.   Notice that $\theta$ does not appear in \eqref{rigid_ns}: it is passive in the dynamics of $u$.  However, the velocity $u$ does play a role in \eqref{passive_s}.  It remains to specify boundary conditions for $\theta.$    

Since we are thinking of $\theta$ as the temperature of the fluid, there is a natural continuum of choices for boundary conditions at each plate.  We will assume that the space adjacent to each plate is maintained at a constant temperature, with $\theta_{ext,-}$ and $\theta_{ext,+}$ denoting the temperature below the bottom plate and the temperature above the top plate, respectively.  One possibility is to  suppose that the plate conducts heat perfectly into the fluid, which requires the temperature to match the temperature external to the plate.  This is modeled through a Dirichlet condition $\theta = \theta_{ext}$.  A more general condition supposes that the plate is an imperfect heat conductor.  In this case it is common (see for example Slattery's book \cite{slattery} for a discussion of these conditions for heat conduction and mass transfer) to model the plate with Newton's law of cooling, which requires the Robin-type boundary condition 
\begin{equation}
\kappa \nab \theta \cdot \nu = \beta(\theta_{ext} - \theta),
\end{equation}
where $\beta \in [0,\infty]$ is the heat transfer coefficient and $\nu$ is the outward-pointing unit normal on the plate.  Notice when $\beta = \infty$ we recover the Dirichlet condition $\theta = \theta_{ext}$; when $\beta=0$ we recover what is known as an insulating condition, corresponding to a homogeneous Neumann boundary condition.     

We may reduce the number of parameters to be considered by studying the evolution of $\theta - \theta_{ext,-}$ instead of $\theta$.  The equation \eqref{passive_s} is still satisfied, but this allows us to replace the temperature below the lower plate by $0$ and the temperature above the upper plate by 
\begin{equation}
\bar{\theta} := \theta_{ext,+} - \theta_{ext,-} \in \mathbb{R}. 
\end{equation}
Since the materials above and below the fluid may in principle be different, we will allow for different heat transfer coefficients in Newton's law of cooling at the top and bottom.  We summarize the equations for $\theta$ as follows: 
\begin{equation}\label{passive_rigid2}
\begin{cases}
\partial_t \theta + u \!\cdot\! \nabla \theta = \kappa \Delta \theta & \text{in } \Omega  \\
\kappa \partial_{x_3}\theta=\beta_+(\bar{\theta}-\theta) & \text{on } \Sigma_+  \\
\kappa \partial_{x_3}\theta=\beta_- \theta & \text{on } \Sigma_-.
\end{cases}
\end{equation}
Here we have used the fact that $\nu = \pm e_3$ on $\Sigma_\pm$.  Henceforth we will write $\beta = (\beta_+,\beta_-) \in [0,\infty]^2$ for the pair of heat conductivity coefficients.

\subsection{Moving boundary}

We shall also  consider a fluid evolving in a moving domain $\Omega(t)$.  Since we wish to compare the decay properties of $\theta$ in moving domains and in rigid domains, we will suppose that $\Omega(t)$ is of the same general ``slab-like'' form as the domain $\Omega$ considered above.   We again assume $\Gamma = \mathbb{R}^2$ or $\Gamma = (L_1\mathbb{T})\times (L_2 \mathbb{T})$ to allow for infinite or periodic horizontal cross sections.  We then assume that
\begin{equation}
\Omega(t) = \{ y \in \Gamma \times \mathbb{R}\ | \ -d<y_3<\eta(t,y_1, y_2)\}, 
\end{equation}
where $\eta: \mathbb{R}_+ \times \Gamma \to \R$ is the ``free surface function.''  Then 
\begin{equation}
 \Sigma(t) = \{ y_3 = \eta(t,y_1,y_2)\}
\end{equation}
is the free surface of the fluid, while 
\begin{equation}
\Sigma_- =  \{y_3 = -d\}
\end{equation}
is the rigid bottom of the fluid domain.

The motion of the fluid is governed by the gravity-driven incompressible Navier-Stokes system in $\Omega(t)$: 
\begin{equation}\label{moving_u}
\begin{cases}
 \partial_t u + u \!\cdot \!\nabla u + \nabla p =\mu \Delta u - ge_3  & \text{in } \Omega(t) \\
 \diverge u = 0 & \text{in } \Omega(t) \\
 \partial_t \eta = u_3 - u_1 \partial_{y_1}\eta- u_2\partial_{y_2}\eta & \text{on } \Sigma(t)\\
\left( p I - \mu \mathbb{D}(u)\right)\nu =  - \sigma {H}\nu & \text{on } \Sigma(t) \\
 u=0 & \text{on } \Sigma_-.
 \end{cases}
\end{equation}
Here $\mu>0$ is the fluid viscosity, $\nu$ is the outward-pointing unit normal on $\Sigma(t)$, $I$ is the $3\times 3$ identity matrix, $\left( \mathbb{D}u \right)_{ij}= \partial_i u_j +\partial_j u_i$ is the symmetric gradient of $u$, $g>0$ is the gravitational constant, $e_3 = (0,0,1)$,  $\sigma\geq 0$ is the surface tension coefficient, and $H$ is twice the mean curvature of the surface $\Sigma(t)$.  

The dynamics of the temperature of the fluid are described by the convection-diffusion equation \eqref{passive_s} with boundary conditions of the form considered above: 
\begin{equation}\label{moving}
\begin{cases}
 \partial_t \theta + u \!\cdot\! \nabla \theta = \kappa \Delta \theta & \text{in } \Omega(t), \\
 \kappa \nab \theta \cdot \nu  =\beta_+ (\bar{\theta} -\theta )  &\text{on } \Sigma(t)  \\
\kappa \partial_{x_3}\theta=\beta_- \theta & \text{on } \Sigma_-.
 \end{cases}
\end{equation}
As in the rigid boundary case, we will consider $\beta =(\beta_+,\beta_-) \in [0,\infty]^2$.  Notice that $\theta$ is passive in the dynamics of $u$: it does not appear in \eqref{moving_u}.   On the other hand, both the velocity field $u$ and the changing boundary (i.e. $\eta$) affect the dynamics of $\theta$.

\subsubsection{Reformulation in flattened coordinates}

In  moving boundary  problems, it is convenient to flatten the free surface by using a suitable coordinate transformation.  We will utilize a flattening coordinate transformation introduced by Beale \cite{B2}. To this end, we consider the fixed domain $\Omega$ given by  \eqref{omega_def}, for which we will write the coordinates as $x\in \Omega$.  We shall again write $\Sigma_+= \{x_3=0\}$ for the upper boundary and $\Sigma_- = \{x_3=-d\}$ for the lower boundary.  We view $\eta$ as a function on $\mathbb{R}_+\times \Gamma$.  We then define 
\begin{equation}
 \bar{\eta}:= \mathcal{P} \eta 
 = \text{harmonic extension of }\eta \text{ into the lower half space}
\end{equation}
where $\mathcal{P} $ is defined in the appendix: by \eqref{poisson_def_inf} when $\Gamma = \R^2$ and by \eqref{poisson_def_per} when $\Gamma = (L_1 \mathbb{T}) \times (L_2 \mathbb{T})$.   The harmonic extension $\bar{\eta}$ allows us to flatten the coordinate domain via the mapping
\begin{equation}\label{mapping_def}
 \Omega \ni x \mapsto   (x_1,x_2, x_3 +  \bar{\eta}(t,x)(1+ x_3/d)) = \Phi(t,x) = (y_1,y_2,y_3) \in \Omega(t).
\end{equation}
Note that $\Phi(t, \Sigma_+) = \{ y_3 = \eta(t,y_1,y_2) \} = \Sigma(t)$ and $\Phi(t,\cdot)\vert_{\Sigma_-} = Id_{\Sigma_-}$, i.e. $\Phi$ maps $\Sigma_+$ to the free surface $\Sigma(t)$ and keeps the lower surface fixed.   We write
\begin{equation}\label{A_def}
 \nab \Phi = 
\begin{pmatrix}
 1 & 0 & 0 \\
 0 & 1 & 0 \\
 A & B & J
\end{pmatrix}
\text{ and }
 \mathcal{A} := (\nab \Phi^{-1})^T = 
\begin{pmatrix}
 1 & 0 & -A K \\
 0 & 1 & -B K \\
 0 & 0 & K
\end{pmatrix}
\end{equation}
for 
\begin{equation}\label{ABJ_def}
\begin{split}
A &= \p_1 \bar{\eta} \tilde{d} ,\;\;\;  B = \p_2 \bar{\eta} \tilde{d} ,  \;\;\; 
J =  1+ \bar{\eta}/d + \p_3 \bar{\eta} \tilde{d},  \;\;\; K = J^{-1},\;\;\;  \tilde{d}  = (1+x_3/d).  
\end{split}
\end{equation} 
Here $J = \det{\nab \Phi}$ is the Jacobian of the coordinate transformation.

If $\eta$ is sufficiently small (in an appropriate Sobolev space), then the mapping $\Phi$ is a $C^1$ diffeomorphism.  This allows us to transform the problem to one on the fixed spatial domain $\Omega$ for $t \ge 0$.  In the new coordinates, the PDE \eqref{moving_u} becomes
\begin{equation}\label{geometric}
 \begin{cases}
  \dt u - \dt \bar{\eta} \tilde{d} K \p_3 u + u \cdot \naba u -  \mu \da u + \naba p     =0 & \text{in } \Omega \\
 \diva u = 0 & \text{in }\Omega \\
 \dt \eta = u \cdot \n & \text{on } \Sigma \\
 (p I  - \mu \sg_{\a} u) \n = g\eta \n -\sigma H \n & \text{on } \Sigma \\
 u = 0 & \text{on } \Sigma_b
 \end{cases}
\end{equation}
with  given initial data $u(0,x) = u_0(x)$, $ \eta(0,x') = \eta_0(x')$  where $x'=(x_1,x_2)$.  We will employ the Einstein convention of summing over  repeated indices for vector and tensor operations.   This allows us to define the differential operators used in \eqref{geometric} as follows.   The actions of $\naba$, $\diva$, and $\da$ are given by $(\naba f)_i := \a_{ij} \p_j f$, $\diva X := \a_{ij}\p_j X_i$, and $\da f = \diva \naba f$ for appropriate $f$ and $X$.   For $u\cdot \naba u$ we mean $(u \cdot \naba u)_i := u_j \a_{jk} \p_k u_i$.  We write   $(\sg_{\a} u)_{ij} = \a_{ik} \p_k u_j + \a_{jk} \p_k u_i$ for the symmetric $\a-$gradient.
We have also written  
\begin{equation}\label{N_def}
\n := -\p_1 \eta e_1 - \p_2 \eta e_2 + e_3 
\end{equation}
for the non-unit normal to $\Sigma$.

Similarly, in the new coordinates, the PDE \eqref{moving} reads 
\begin{equation}\label{geometric1}
 \begin{cases}
  \dt \theta - \dt \bar{\eta} \tilde{d} K \p_3 \theta + u \cdot \naba \theta -\kappa \da \theta      =0 & \text{in } \Omega \\
 \kappa \naba \theta \cdot \nu= \beta_+(\bar{\theta}-\theta)   & \text{on } \Sigma_+ \\
  \kappa \partial_{x_3} \theta = \beta_-\theta   & \text{on } \Sigma_- 
 \end{cases}
\end{equation}
with given initial data $\theta(0,x)=\theta_0(x)$. Here $\nu=\frac{\n} {|\n|}$. Notice that all the differential operators in \eqref{geometric} and \eqref{geometric1} are connected to $\eta$ and thus to  the geometry of the free surface. 

\subsection{Equilibria}

An equilibrium solution corresponds to $u =0$ and $\Omega(t) = \Omega$ given by \eqref{omega_def} (and hence $\eta =0$ in the case of a moving boundary).  In this case  \eqref{passive_rigid2} and  \eqref{geometric1} reduce to the same equations, so the equilibria coincide for the rigid and moving boundaries.  To denote the dependence on $\beta = (\beta_+,\beta_-)$ let us denote the equilibrium temperature by $\theta_{eq}^\beta$.  We need to solve $\Delta \theta_{eq}^\beta=0$ with the given boundary conditions.   

The general equilibrium solution is then 
\begin{equation}\label{eq}
 \begin{split}
\theta_{eq}^\beta &= \frac{\kappa \beta_+ \bar{\theta} + \beta_+\beta_- \bar{\theta} (x_3 + d)}{\kappa(\beta_+ + \beta_-) + \beta_+ \beta_- d}, \text{ for } \beta \in [0,\infty]^2 \backslash \{(0,0)\}, \\
\theta_{eq}^\beta &= C = \text{some constant, for } \beta = (0,0).  
 \end{split}
\end{equation}
 Notice in particular that the equilibrium with either $\beta_+ = \infty$ or $\beta_- = \infty$, but not both, is recovered from \eqref{eq} by taking a limit:
\begin{equation}
\theta_{eq}^{(\beta_+,\infty)}= \frac{\beta_+ \bar{\theta}}{\kappa+\beta_+ d} \left( x_3 + d\right) \text{ and } \theta_{eq}^{(\infty, \beta_-)}= \bar{\theta} + \frac{\beta_- \bar{\theta}}{\kappa + \beta_- d}x_3. 
\end{equation}
The equilibrium with $\beta_\pm =\infty$ (pure Dirichlet boundary conditions) is recovered through a further limit:
\begin{equation}
 \theta_{eq}^{(\infty,\infty)} = \frac{\bar{\theta}(x_3 + d)}{d}.
\end{equation}

We note that unlike in the case of a periodic box, the equilibrium temperature is in general not a constant because of the boundary conditions.  We will see that solutions of \eqref{passive_rigid2} and \eqref{geometric1} converge asymptotically to these equilibrium solutions when $\beta \neq (0,0)$.  When $\beta =(0,0)$ the equilibrium is not unique; this is related to the fact that solutions converge (at least in the periodic case) to a constant determined by the initial data.

\section{Main results}\label{sec_main}

\subsection{Well-posedness}

We begin with a discussion of the notion of weak solutions to \eqref{passive_rigid2} and \eqref{geometric1}, given as perturbations of the equilibrium state $\theta_{eq}^\beta$.  Since $\theta_{eq}^\beta$ is not in $L^2(\Omega)$ when $\Omega$ is horizontally infinite, we must formulate the weak problem in terms of the perturbation.  We will also discuss the construction of local-in-time solutions.

First we define the appropriate functional setting for solutions.  Let $H^1_\beta(\Omega)$ denote the space 
\begin{equation}\label{sobolev_beta}
H^1_\beta(\Omega)=
 \begin{cases}
  H^1(\Omega) &\text{if } \beta \in [0,\infty)^2 \\
  \{\varphi \in H^1(\Omega) \;\vert\; \varphi\vert_{\Sigma_+} =0  \}  &\text{if } \beta_+ = \infty, \beta_- \in [0,\infty)  \\
  \{\varphi \in H^1(\Omega) \;\vert\; \varphi\vert_{\Sigma_-} =0  \}  &\text{if } \beta_- = \infty, \beta_+ \in [0,\infty)  \\
  \{\varphi \in H^1(\Omega) \;\vert\; \varphi\vert_{\Sigma_+} = \varphi\vert_{\Sigma_-} =0  \}  &\text{if } \beta_+ = \beta_-= \infty.
 \end{cases}
\end{equation}

Now we give the notion of weak solutions for the rigid problem.  

\begin{dfn}\label{weak_rigid_def}
Let $T \in (0,\infty]$.  Suppose that $u: [0,T] \times \Omega \to \R^3$  is a given field satisfying 
\begin{equation}\label{rigid_assump_1}
 u \in L^\infty([0,T];L^\infty(\Omega)) \cap L^2([0,T];L^2(\Omega)).
\end{equation}
Let $\theta_0 \in L^2(\Omega)$.

We say that a a function $\theta: [0,T] \times \Omega \to \R$ is a weak solution to \eqref{passive_rigid2} if the following three conditions are satisfied.
\begin{itemize}
 \item[(i)] We have the inclusions 
\begin{equation}
 \theta - \theta_{eq}^\beta \in L^2([0,T]; H^1_\beta(\Omega)) \text{ and } \dt \theta \in L^2([0,T]; (H^1_\beta(\Omega))^\ast ).
\end{equation}

 \item[(ii)] $(\theta - \theta_{eq}^\beta) \vert_{t=0} = \theta_0$.  

 \item[(iii)] For almost every $t \in [0,T]$ we have the identity
\begin{equation}\label{weak_form_rigid}
 \br{\dt \theta,v}_\ast +  B^\beta(\theta-\theta_{eq}^\beta,v) + \int_\Omega \kappa \nab (\theta-\theta_{eq}^\beta) \cdot \nab v + u \cdot \nab (\theta-\theta_{eq}^\beta) v    =- \int_\Omega  u \cdot \nab \theta_{eq}^\beta v
\end{equation}
for all $v \in H^1_\beta(\Omega)$.  Here $\br{\cdot,\cdot}_\ast$ denotes the   pairing between $H^1_\beta(\Omega)$ and its dual, and  $B^\beta : H^1_\beta(\Omega) \times  H^1_\beta(\Omega) \to \R$ is the bilinear form defined by
\begin{equation}
B^\beta(w,v) =
\begin{cases}
\int_{\Sigma_+}\beta_+ w v + \int_{\Sigma_-} \beta_- w v &\text{if } \beta \in [0,\infty)^2 \\
 \int_{\Sigma_-} \beta_- w v &\text{if } \beta_+ = \infty, \beta_- \in [0,\infty)  \\
 \int_{\Sigma_+}\beta_+ w v &\text{if } \beta_- = \infty, \beta_+ \in [0,\infty)  \\
 0 &\text{if } \beta_+ = \beta_-= \infty.
\end{cases} 
\end{equation}

\end{itemize}
 
\end{dfn}

\begin{Remark}
 The justification of the weak formulation \eqref{weak_form_rigid} is standard: if $\theta$ is a smooth solution to \eqref{passive_rigid2} then \eqref{weak_form_rigid} may be derived through integration by parts and use of the boundary conditions satisfied by $\theta_{eq}^\beta$.  In this case we have
\begin{equation}
 \br{\dt \theta,v}_\ast = \int_\Omega \dt \theta v.
\end{equation}
\end{Remark}

\begin{Remark}
Standard arguments and the inclusions of $(i)$ imply that  
\begin{equation}\label{weak_data}
\theta - \theta_{eq}^\beta \in C^0([0,T]; L^2(\Omega)). 
\end{equation}
This is the sense in which $(ii)$ is required to hold.
\end{Remark}

\begin{Remark}
 The fact that $B^\beta$ is well-defined on $H^1_\beta(\Omega) \times H^1_\beta(\Omega)$ follows from the standard trace theory.
\end{Remark}

\begin{Remark}
We require the  inclusion \eqref{rigid_assump_1} in order to make the integrals 
\begin{equation}
\int_\Omega u \cdot \nab (\theta-\theta_{eq}^\beta) v    \text{ and } \int_\Omega  u \cdot \nab \theta_{eq}^\beta v
\end{equation}
well-defined for $\theta - \theta_{eq}^\beta, v \in H^1_\beta(\Omega)$.  Notice, though, that our definition of weak solution does not require that $\diverge{u}=0$.  We will invoke this condition only in a global setting. 
\end{Remark}

We may readily prove the following local well-posedness result for \eqref{passive_rigid2}.  The proof, which we omit for the sake of brevity, is a standard application of a Galerkin method based on the same energy estimates that we will use for our global existence and decay results.  We refer to Section \ref{sec_energy} for the energy estimates.

\begin{Proposition}\label{rigid_lwp}
Fix $T \in (0,\infty)$.  Let $u$ satisfy \eqref{rigid_assump_1} and let $\theta_0 \in L^2(\Omega)$.   Then there exists a unique weak solution $\theta: [0,T]\times \Omega \to \R$ to \eqref{passive_rigid2}.  
\end{Proposition}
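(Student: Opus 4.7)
The plan is to follow the standard Faedo--Galerkin construction applied to the perturbation $\tilde\theta = \theta - \theta_{eq}^\beta$, which must satisfy the homogeneous-in-the-equilibrium variational equation
\begin{equation*}
\br{\dt \tilde\theta, v}_\ast + B^\beta(\tilde\theta, v) + \int_\Omega \kappa \nab \tilde\theta \cdot \nab v + \int_\Omega (u\cdot \nab \tilde\theta) v = -\int_\Omega (u \cdot \nab \theta_{eq}^\beta) v
\end{equation*}
for all $v \in H^1_\beta(\Omega)$, with initial condition $\tilde\theta(0) = \theta_0 \in L^2(\Omega)$. First I would fix a countable basis $\{e_k\}_{k \ge 1}$ of $H^1_\beta(\Omega)$ that is orthonormal in $L^2(\Omega)$ (for example by applying Gram--Schmidt to any countable dense family of smooth functions compatible with the boundary trace constraints in \eqref{sobolev_beta}), set $V_n = \mathrm{span}\{e_1,\dots,e_n\}$, and seek $\tilde\theta_n(t) = \sum_{k=1}^n c_k^n(t) e_k$ solving the Galerkin projection of the variational equation with $\tilde\theta_n(0) = P_n \theta_0$. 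This produces a linear ODE system for $(c_k^n)$ whose coefficients are in $L^1_t$ via \eqref{rigid_assump_1} and the boundedness of $\nab \theta_{eq}^\beta$ on $\Omega$, so unique solutions on $[0,T]$ exist by Carathéodory theory.

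Next I would derive a uniform energy estimate by testing against $\tilde\theta_n$ itself. Since $B^\beta(\tilde\theta_n,\tilde\theta_n) \ge 0$ and $\nab \theta_{eq}^\beta \in L^\infty(\Omega)$, I obtain
\begin{equation*}
\tfrac{1}{2} \tfrac{d}{dt}\pns{\tilde\theta_n}{2} + \kappa \pns{\nab \tilde\theta_n}{2} \le \pnorm{u}{\infty} \pnorm{\nab\tilde\theta_n}{2}\pnorm{\tilde\theta_n}{2} + C \pnorm{u}{2}\pnorm{\tilde\theta_n}{2}.
\end{equation*}
Absorbing the first term on the right into $\kappa \pns{\nab\tilde\theta_n}{2}$ via Young's inequality and applying Grönwall (using $u \in L^\infty_t L^\infty_x \cap L^2_t L^2_x$) yields uniform bounds for $\tilde\theta_n$ in $L^\infty([0,T];L^2(\Omega)) \cap L^2([0,T];H^1_\beta(\Omega))$. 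A standard duality argument using the equation together with these bounds and the $L^\infty$ control on $u$ then gives a uniform bound on $\dt \tilde\theta_n$ in $L^2([0,T];(H^1_\beta(\Omega))^\ast)$. Weak-* and weak compactness furnish a subsequence converging to some $\tilde\theta$ in the appropriate topologies, and because the variational equation is linear in $\tilde\theta_n$ the limit satisfies the weak formulation against any test function in $V_n$ for each $n$, hence against all of $H^1_\beta(\Omega)$ by density. The Aubin--Lions lemma yields strong convergence in $L^2_t L^2_x$ on bounded subsets, which combined with $\tilde\theta_n(0) \to \theta_0$ in $L^2(\Omega)$ (using \eqref{weak_data}) identifies the initial trace, verifying (i)--(iii).

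For uniqueness, suppose $\theta^{(1)}, \theta^{(2)}$ are two weak solutions and set $w = \theta^{(1)} - \theta^{(2)}$. Then $w \in L^2([0,T];H^1_\beta(\Omega))$ with $\dt w \in L^2([0,T];(H^1_\beta(\Omega))^\ast)$ and $w(0) = 0$; subtracting the two variational equations, using $w$ itself as a test function (justified by \eqref{weak_data}), and applying the same Young--Grönwall scheme produces $\pns{w(t)}{2} \le 0$ on $[0,T]$, so $w \equiv 0$.

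The principal obstacle is the convective term $\int_\Omega (u \cdot \nab \tilde\theta) v$: since $\diverge u$ is not assumed to vanish, the usual antisymmetry trick $\int (u \cdot \nab w) w = 0$ is unavailable, and one must instead pay a derivative on $\tilde\theta$ and absorb it into the $\kappa \pns{\nab \tilde\theta}{2}$ dissipation. The hypothesis $u \in L^\infty_t L^\infty_x$ in \eqref{rigid_assump_1} is precisely what makes this absorption work cleanly. A secondary point to handle carefully is the case $\beta = (0,0)$, in which $B^\beta \equiv 0$ and no Poincaré inequality is available; the argument still closes on a finite interval $[0,T]$ because only $L^2$-coercivity of the time derivative is needed for Grönwall, not a spectral gap.
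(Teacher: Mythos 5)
Your proposal is correct and carries out exactly the Galerkin-plus-energy-estimates scheme that the paper indicates but omits for brevity. The one place requiring genuine care — estimating the convective term via $u \in L^\infty_t L^\infty_x$ and Young's inequality rather than by the antisymmetry trick, since Proposition~\ref{rigid_lwp} does not assume $\diverge u = 0$ (cf.\ the remark following Definition~\ref{weak_rigid_def}) — is handled correctly, and your discussion makes clear why \eqref{rigid_assump_1} includes the $L^\infty$ bound.
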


Now we turn to the moving boundary problem.  We will state the definition first and then justify the requirements afterward.  Recall that $\mathcal{A}$, $J$, $K$, and $\n$ are all defined in \eqref{A_def}, \eqref{ABJ_def}, and \eqref{N_def}.

\begin{dfn}\label{weak_moving_def}
Let $T \in (0,\infty]$.  Suppose that $u: [0,T] \times \Omega \to \R^3$ and $\eta : [0,T] \times \Gamma \to \R$ are given and satisfy the inclusions
\begin{equation}\label{weak_geo_inclusion}
\begin{split}
  u & \in L^\infty([0,T];L^\infty(\Omega)) \cap L^2([0,T];L^2(\Omega)), \\
 \eta &\in  L^\infty([0,T];H^{5/2}(\Gamma)) \cap L^2([0,T];H^{3/2}(\Gamma)),   \\
\dt \eta &\in  L^\infty([0,T];H^{3/2}(\Gamma)) \cap    L^2([0,T];L^2(\Gamma)) 
\end{split}
\end{equation}
as well as the estimate
\begin{equation}\label{weak_geo_bound}
 \norm{\eta}_{L^\infty H^{5/2}} \le \delta,
\end{equation}
where $\delta \in (0,1)$ is given in Lemma \ref{eta_small}.  Let $\theta_0 \in L^2(\Omega)$. 

We say that a a function $\theta: [0,T] \times \Omega \to \R$ is a weak solution to \eqref{geometric1} if the following three conditions are satisfied.
\begin{itemize}
 \item[(i)] We have the inclusions 
\begin{equation}
 \theta - \theta_{eq}^\beta \in L^2([0,T]; H^1_\beta(\Omega)) \text{ and } \dt \theta \in L^2([0,T]; (H^1_\beta(\Omega))^\ast ).
\end{equation}

 \item[(ii)] $(\theta - \theta_{eq}^\beta) \vert_{t=0} = \theta_0$.   
 
 \item[(iii)] For almost every $t \in [0,T]$ we have the identity
\begin{multline}\label{weak_form_geo}
 \br{\dt \theta,J v}_\ast +  C_t^\beta(\theta-\theta_{eq}^\beta,v) + \int_\Omega \kappa J \naba (\theta-\theta_{eq}^\beta) \cdot \naba v + u \cdot \naba (\theta-\theta_{eq}^\beta) v J  - \dt \bar{\eta} \tilde{d} \p_3 (\theta-\theta_{eq}^\beta) v\\
  = F_t^\beta(v) + \int_\Omega \left( \dt \bar{\eta} \tilde{d} K \p_3 \theta_{eq}^\beta  -u_j \a_{jk} \p_k\theta_{eq}^\beta +\kappa \a_{jl} \p_l \left(\a_{jk} \p_k \theta_{eq}^\beta \right) \right) v J
\end{multline}
for all $v \in H^1_\beta(\Omega)$.  Here $\br{\cdot,\cdot}_\ast$ denotes the   pairing between $H^1_\beta(\Omega)$ and its dual,   $C_t^\beta : H^1_\beta(\Omega) \times  H^1_\beta(\Omega) \to \R$ is the $t-$dependent bilinear form defined by
\begin{equation}
C_t^\beta(w,v) =
\begin{cases}
\int_{\Sigma_+}\beta_+ w v \abs{\n} + \int_{\Sigma_-} \beta_- w v K &\text{if } \beta \in [0,\infty)^2 \\
 \int_{\Sigma_-} \beta_- w v K &\text{if } \beta_+ = \infty, \beta_- \in [0,\infty)  \\
 \int_{\Sigma_+}\beta_+ w v \abs{\n} &\text{if } \beta_- = \infty, \beta_+ \in [0,\infty)  \\
 0 &\text{if } \beta_+ = \beta_-= \infty,
\end{cases} 
\end{equation}
and $F_t^\beta : H^1_\beta(\Omega) \to \R$ is the $t-$dependent force term given by
\begin{equation}\label{force_def}
F_t^\beta(v) =
\begin{cases}
\int_{\Sigma_+} \beta_+ (\bar{\theta} - \theta_{eq}^\beta) \abs{\n} (1- K \abs{\n}) v&\text{if } \beta_+ \in [0,\infty)  \\
 0 &\text{otherwise }.
\end{cases} 
\end{equation}

\end{itemize}
 
\end{dfn}

Some remarks are in order.

\begin{Remark}
 The weak formulation \eqref{weak_form_geo} is justified in the same way as \eqref{weak_form_rigid}.  The only difference is that the computations in the integration by parts are somewhat more involved.  For the sake of readability we have moved the computation to the appendix: see Lemma \ref{mov_smooth_ident}.
\end{Remark}

\begin{Remark}
The influence of the moving boundary is manifest in \eqref{weak_form_geo} through the appearance of the terms $J,$ $\a$, etc, all of which depend on $\eta$, the free surface function.  
\end{Remark}

\begin{Remark}
 The inclusions  \eqref{weak_geo_inclusion} and the bound \eqref{weak_geo_bound} allow us to employ Lemmas \ref{eta_poisson}, \ref{eta_small}, and \ref{nonlin_ests}.  Together these guarantee that all of the integrals and the dual-pairing in \eqref{weak_form_geo} are well-defined.  
\end{Remark}

\begin{Remark}
In the weak solution definition we make no assumptions about $u$ and $\eta$ satisfying any of the equations of \eqref{geometric}.  We will need some of these only in the global theory.
\end{Remark}

We may again readily deduce the existence of local weak solutions to \eqref{geometric1} by employing a standard  Galerkin method based on the energy estimates that we will use for our global existence and decay results, and Lemmas \ref{eta_poisson}, \ref{eta_small}, and \ref{nonlin_ests}.  Notice in particular that Lemma \ref{eta_small} provides for the ellipticity condition on $\a$ and for the fact that the boundary integrals are controlled via the usual $H^1(\Omega)$ trace theory.  We again refer to Section \ref{sec_energy} for the energy estimates, and again omit the proof of local existence result.

\begin{Proposition}\label{geo_lwp}
Fix $T \in (0,\infty)$.  Let $u,\eta$ satisfy \eqref{weak_geo_inclusion} and \eqref{weak_geo_bound}, and let $\theta_0 \in L^2(\Omega)$.   Then there exists a unique weak solution $\theta: [0,T]\times \Omega \to \R$ to \eqref{geometric1}.  
\end{Proposition}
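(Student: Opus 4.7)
The plan is to adapt the standard Galerkin approximation argument for linear parabolic problems to this moving-domain setting. Introducing the perturbation $\psi := \theta - \theta_{eq}^\beta$ converts \eqref{weak_form_geo} to a linear inhomogeneous problem for $\psi \in L^2([0,T];H^1_\beta(\Omega))$ with data $\psi(0) = \theta_0$ and forcing prescribed by $u$, $\eta$, and $\theta_{eq}^\beta$. Choose a countable orthonormal basis $\{e_k\}_{k=1}^\infty$ of the separable Hilbert space $H^1_\beta(\Omega)$ (for concreteness, eigenfunctions of the Laplacian on $\Omega$ with the boundary conditions dictated by $\beta$), and seek finite-dimensional approximations $\psi^m(t,x) = \sum_{k=1}^m c^m_k(t) e_k(x)$ satisfying \eqref{weak_form_geo} (with $\theta - \theta_{eq}^\beta$ replaced by $\psi^m$) tested against $v = e_1,\dots,e_m$, and with initial data $\psi^m(0) = P_m \theta_0$, where $P_m$ is $L^2$-projection onto $\mathrm{span}(e_1,\dots,e_m)$. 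The hypotheses \eqref{weak_geo_inclusion}–\eqref{weak_geo_bound} combined with Lemmas \ref{eta_poisson}, \ref{eta_small}, and \ref{nonlin_ests} render all coefficients measurable and integrable in $t$, and guarantee that the mass matrix associated to $\br{\dt\psi^m, J e_k}$ is uniformly invertible (since $J$ is bounded below). Carath\'eodory theory then yields $c^m$ on a maximal interval.

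Next obtain a uniform a priori bound. Testing against $v=\psi^m$ (i.e.\ multiplying by $c^m_k$ and summing) reproduces precisely the energy identity to be proved in Section \ref{sec_energy}. Standard Cauchy--Schwarz, Young's inequality, and the bounds on $u$, $\eta$, $\dt\eta$ absorb the transport and geometric error terms into $\kappa$-fraction of the dissipation, yielding
\begin{equation*}
\sup_{t\in[0,T]} \pns{\psi^m(t)}{2} + \int_0^T \snormspace{\psi^m(t)}{1}{\Omega}^2 \, dt \le C(T,u,\eta,\theta_0,\theta_{eq}^\beta)
\end{equation*}
uniformly in $m$. This precludes blow-up of $c^m$, extends $\psi^m$ to all of $[0,T]$, and produces weak compactness in $L^2([0,T];H^1_\beta(\Omega))$ together with weak-$\ast$ compactness in $L^\infty([0,T];L^2(\Omega))$. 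Rewriting \eqref{weak_form_geo} and using the uniform boundedness of the coefficients of $\psi^m$ furnishes a uniform bound for $\dt \psi^m$ in $L^2([0,T];(H^1_\beta(\Omega))^\ast)$.

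Extract a subsequence converging to some $\psi$ and pass to the limit in \eqref{weak_form_geo}. Since every term is linear in $\psi^m$ with coefficients independent of $m$, weak convergence suffices, including in the boundary forms $C^\beta_t$ via the continuity of the trace. The inclusions $\psi \in L^2 H^1_\beta$ and $\dt\psi \in L^2 (H^1_\beta)^\ast$ yield $\psi \in C^0([0,T];L^2(\Omega))$ as in \eqref{weak_data}, and the initial condition $\psi(0) = \theta_0$ follows from passing to the limit in $\psi^m(0) = P_m \theta_0 \to \theta_0$. For uniqueness, subtract two weak solutions: the difference $\delta \psi$ solves the weak problem with zero data and zero forcing; the distributional chain-rule identity $\frac{d}{dt}\int_\Omega |\delta\psi|^2 J = 2\br{\dt \delta\psi, J\,\delta\psi}_\ast + \int_\Omega |\delta\psi|^2 \dt J$ (justified by the inclusions above and the $L^\infty$ bound on $\dt J$) combined with Gr\"onwall's inequality gives $\delta\psi \equiv 0$.

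The principal technical obstacle is the presence of $J$ inside the dual pairing $\br{\dt\theta, Jv}_\ast$, which couples the test function to the geometry and slightly breaks the usual structure. This is resolved by the uniform two-sided bound $0 < c \le J \le C$ from Lemma \ref{eta_small}: it ensures the mass matrix in the Galerkin ODE is uniformly invertible, that the weighted $L^2$-norm $\int_\Omega |\cdot|^2 J$ is equivalent to the standard one, and that multiplication by $J$ maps $H^1_\beta(\Omega)$ to itself boundedly. A secondary point, handled by Lemma \ref{nonlin_ests}, is the time integrability of $\dt\bar\eta$, $\mathcal{A}$, and $\dt J$ as coefficients; the regularity class \eqref{weak_geo_inclusion} was chosen precisely so that these are controlled in the strongest norms needed for the Galerkin construction and the limit passage.
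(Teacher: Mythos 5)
Your overall strategy matches what the paper indicates but omits: a Galerkin scheme in $H^1_\beta(\Omega)$ driven by the energy computation of Section \ref{sec_energy}, uniform invertibility of the mass matrix from $J \ge 1/c_0$, weak compactness, linearity in the limit passage, and a Gr\"onwall argument for uniqueness. Two points, however, are imprecise in a way that matters.

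First, you write that testing against $\psi^m$ ``reproduces precisely the energy identity'' of Lemma~\ref{geo_energy}. That identity is derived under the additional structural hypotheses of Theorem~\ref{thm-mo} --- $\diva u = 0$, $\dt\eta = u\cdot\n$ on $\Sigma_+$, $u_3=0$ on $\Sigma_-$ --- none of which are assumed in Proposition~\ref{geo_lwp}; without them, the transport and geometric terms in $I_2$ (in the notation of that proof) do not cancel. Your subsequent sentence about absorbing them via Cauchy--Schwarz and Young is the right remedy, but the claim that one ``reproduces'' the identity overstates the situation. Second, and more substantively, your justification of the chain rule $\frac{d}{dt}\int_\Omega |\delta\psi|^2 J = 2\br{\dt\delta\psi, J\delta\psi}_\ast + \int_\Omega |\delta\psi|^2\dt J$ invokes ``the $L^\infty$ bound on $\dt J$,'' and you later claim $\dt J$ is controlled by Lemma~\ref{nonlin_ests} and the regularity class \eqref{weak_geo_inclusion}. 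Neither is correct: $\dt J = \p_3(\dt\bar\eta\,\tilde d)$, and $\|\p_3 \dt\bar\eta\|_{L^\infty(\Omega)} \ls \|\dt\eta\|_{H^{5/2}(\Gamma)}$ by \eqref{pe_02}, whereas \eqref{weak_geo_inclusion} only provides $\dt\eta \in L^\infty H^{3/2}(\Gamma)$; Lemma~\ref{nonlin_ests} says nothing about $\dt J$. The fix is the same device used in the proof of Lemma~\ref{geo_energy}: integrate by parts in $x_3$, using $\tilde d = 0$ on $\Sigma_-$ and $\dt\bar\eta\vert_{\Sigma_+} = \dt\eta$, to rewrite
\begin{equation*}
\int_\Omega \dt J\, |\psi|^2 = \int_{\Sigma_+} \dt\eta\, |\psi|^2 - 2\int_\Omega \dt\bar\eta\,\tilde d\, \psi\, \p_3\psi,
\end{equation*}
and then estimate the interior term via $\|\dt\bar\eta\|_{L^\infty} \ls \|\dt\eta\|_{H^{3/2}}$ and Young's inequality (absorbing into the $\kappa$-dissipation), and the boundary term via the trace inequality $\|\psi\|_{L^2(\Sigma_+)}^2 \ls \|\psi\|_{L^2(\Omega)}\|\psi\|_{H^1(\Omega)}$ together with $H^{3/2}(\Gamma)\hookrightarrow L^\infty(\Gamma)$ and Young's inequality. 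The same care is needed when multiplying the Galerkin ODE by $c^m_k$, since $\hal\frac{d}{dt}(c^m)^T M c^m - (c^m)^T M \dot c^m = \hal (c^m)^T\dot M c^m$ with $\dot M_{jk} = \int_\Omega \dt J\, e_j e_k$. With this substitution your argument goes through.
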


\subsection{Global solutions and equilibration}

We now state our main results on the existence of global solutions and their decay to equilibrium.  We begin with the case of a rigid domain.

\begin{theorem}[Rigid domain]\label{thm-ri}
Let $\Omega$ be the rigid domain defined by \eqref{omega_def} with either $\Gamma = \R^2$ or $\Gamma = (L_1 \mathbb{T}) \times (L_2 \mathbb{T})$.  Assume that $u:\R_+ \times \Omega \to \R^3$ satisfies 
\begin{equation}
 u \in L^\infty(\R_+; L^\infty(\Omega)) \cap L^2(\R_+;H^1(\Omega)),
\end{equation}
and that for a.e. $t \in \R_+$ $u(t,\cdot)$ satisfies the incompressibility condition 
\begin{equation}\label{ri_01}
\diverge u =0 
\end{equation}
 as well as the boundary conditions
\begin{equation}\label{ri_02} 
u_3 =0 \text{ on } \Sigma_+ \text{ and } \Sigma_-,   
\end{equation}
where $\Sigma_\pm$ are defined by \eqref{sigma_def}.   Let $\theta_0 \in L^2(\Omega)$.  

Then there exists a unique $\theta: \R_+ \times \Omega \to \R$ that satisfies $(\theta - \theta_{eq}^\beta)\vert_{t=0} = \theta_0$ and 
is the weak solution to \eqref{passive_rigid2} in the sense of Definition \ref{weak_rigid_def} with $T= \infty$.  Moreover, $\theta$ obeys the following estimates, where $\mu_\beta >0$ is as defined in Lemma \ref{eigenvalue} when $\beta \neq (0,0)$ and $\bar{\mu}_0 >0$ is as defined in Proposition \ref{rigid_coercive_zero} when $\beta=(0,0)$.
\begin{itemize}
\item[(i)] When $\beta_\pm =0$ (pure Neumann boundary conditions) and $\Gamma = (L_1 \mathbb{T}) \times (L_2 \mathbb{T})$, then we define 
\begin{equation}
 \theta_{avg} = \frac{1}{\abs{\Omega}} \int_\Omega \theta_0.  
\end{equation}
Then 
\begin{equation}\label{avg_cond}
 \frac{1}{\abs{\Omega}} \int_\Omega (\theta(t,\cdot) - \theta_{eq}^\beta) = \theta_{avg} \text{ for all }t\in \R_+.
\end{equation}
Also, regardless of the decay of $u$, $\theta-\theta_{eq}^\beta-\theta_{avg}$ decays to $0$ exponentially fast with the following estimate 
\begin{equation}\label{decay00}
\| \theta(t,\cdot) - \theta_{eq}^\beta  - \theta_{avg} \|_{L^2(\Omega)} \leq \exp\left(-\bar{\mu}_0 t\right) \| \theta_0 - \theta_{avg} \|_{L^2(\Omega)}. 
\end{equation}

\item[(ii)] When $\beta_\pm =0$ and $\Gamma = \R^2$, we have the estimate
\begin{equation}
 \sup_{t \ge 0} \| \theta(t,\cdot) - \theta_{eq}^\beta   \|_{L^2(\Omega)}^2 + \int_0^\infty \ns{\nab (\theta(t,\cdot) - \theta_{eq}^\beta)}_{L^2(\Omega)}dt  \le \frac{3}{2} \ns{\theta_0}_{L^2(\Omega)}. 
\end{equation}

\item[(iii)] When $\beta \in [0,\infty]^2 \backslash \{(0,0)\}$, we further assume the $L^2$ decay of $u$: 
\begin{equation}
\|u(t,\cdot)\|_{L^2(\Omega)} \leq g(t), 
\end{equation}
where $g(t) \to 0$ as $t \to \infty$. Then $\theta -\theta_{eq}^\beta$ decays to $0$ with the following estimate
\begin{equation}\label{decay1}
 \|\theta(t,\cdot) -\theta^\beta_{eq} \|_{L^2(\Omega)} \leq e^{-\mu_\beta t}  \|\theta_0  \|_{L^2(\Omega)}  + \abs{\p_3 \theta_{eq}^\beta}  \int_0^t e^{-\mu_\beta (t-s)} g(s)ds . 
\end{equation}
\end{itemize}
\end{theorem}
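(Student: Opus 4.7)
The plan is to reduce all three cases to an energy analysis for the perturbation $\phi := \theta - \theta_{eq}^\beta$, with the decay mechanism in each case dictated by the available Poincar\'e-type inequality and by the presence or absence of a source term. Global existence and uniqueness come from concatenating the local weak solutions produced by Proposition \ref{rigid_lwp} on overlapping intervals, an extension justified by the uniform a priori bound on $\norm{\phi(t)}_{L^2(\Omega)}$ established below, together with the continuous-in-time representative $\phi \in C^0([0,T];L^2(\Omega))$ of \eqref{weak_data} and the duality identity $\frac{d}{dt}\ns{\phi}_{L^2(\Omega)} = 2 \br{\dt \phi,\phi}_\ast$ for elements of the triple $L^2(H^1_\beta) \cap H^1((H^1_\beta)^\ast)$.

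The core calculation is to test the weak formulation \eqref{weak_form_rigid} against $v = \phi$. Since $\theta_{eq}^\beta$ depends only on $x_3$, the forcing on the right of \eqref{weak_form_rigid} reduces to $-\p_3 \theta_{eq}^\beta \int_\Omega u_3 \phi$, and since $u$ is divergence-free with $u_3 = 0$ on $\Sigma_\pm$, the transport contribution $\int_\Omega u \cdot \nab \phi\, \phi = \frac{1}{2}\int_\Omega u \cdot \nab(\phi^2)$ vanishes after integration by parts. These two observations give the clean identity
\begin{equation*}
\frac{1}{2}\frac{d}{dt}\ns{\phi}_{L^2(\Omega)} + \kappa \ns{\nab \phi}_{L^2(\Omega)} + B^\beta(\phi,\phi) = - \p_3 \theta_{eq}^\beta \int_\Omega u_3\, \phi.
\end{equation*}
For case (iii), the Poincar\'e-type eigenvalue inequality of Lemma \ref{eigenvalue}, namely $\mu_\beta \ns{\phi}_{L^2(\Omega)} \le \kappa \ns{\nab\phi}_{L^2(\Omega)} + B^\beta(\phi,\phi)$, combined with Cauchy--Schwarz and the hypothesis $\norm{u}_{L^2(\Omega)}\le g(t)$, reduces the identity to the scalar differential inequality $\frac{d}{dt}\norm{\phi} + \mu_\beta \norm{\phi} \le \abs{\p_3 \theta_{eq}^\beta}\, g(t)$ (with a standard $\ep$-regularization $\norm{\phi}^2 + \ep$ to justify division by $\norm{\phi}$); multiplying by $e^{\mu_\beta t}$ and integrating from $0$ to $t$ produces exactly the Duhamel representation \eqref{decay1}.

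Cases (i) and (ii) both have vanishing source because $\theta_{eq}^\beta$ is a constant when $\beta=(0,0)$. For (i), testing \eqref{weak_form_rigid} against $v \equiv 1 \in H^1_\beta(\Omega) = H^1(\Omega)$ gives $\frac{d}{dt}\int_\Omega \phi = 0$, which is \eqref{avg_cond}; setting $\psi := \phi - \theta_{avg}$ preserves the equation and imposes zero mean on $\psi$, so the coercivity estimate of Proposition \ref{rigid_coercive_zero} yields $\frac{d}{dt}\ns{\psi}_{L^2(\Omega)} + 2\bar{\mu}_0 \ns{\psi}_{L^2(\Omega)} \le 0$, and Gronwall produces \eqref{decay00}. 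For (ii) no spectral gap is available on $\R^2 \times (-d,0)$, so we simply integrate the source-free energy identity in time, obtaining $\sup_{t\ge 0} \ns{\phi}_{L^2(\Omega)} \le \ns{\phi_0}_{L^2(\Omega)}$ together with $\kappa\int_0^\infty \ns{\nab \phi}_{L^2(\Omega)}\, dt \le \frac{1}{2} \ns{\phi_0}_{L^2(\Omega)}$; adding these accounts for the factor $3/2$ in the stated bound.

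The main technical obstacle is rigorously justifying the vanishing of the transport term under the low regularity hypotheses on $u$ and $\phi$: the identity $\frac{1}{2}\diverge(u\phi^2) = u \cdot \nab\phi\, \phi$ holds in the sense of distributions thanks to $u \in L^\infty_t L^\infty_x$ and $\phi \in L^2_t H^1_\beta$, and the surface integrals appearing in the divergence theorem vanish by $u_3\vert_{\Sigma_\pm} = 0$, but care is needed in approximation when $\Gamma = \R^2$ (one cuts off in the horizontal variables and passes to the limit using the dominated convergence theorem). A secondary bookkeeping matter is the uniform treatment of the extreme cases $\beta_\pm \in \{0,\infty\}$, which requires no new ideas, only the conventions built into the space $H^1_\beta(\Omega)$, the bilinear form $B^\beta$, and the eigenvalue $\mu_\beta$, all arranged precisely so that the same energy argument closes in every case.
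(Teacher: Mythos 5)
Your argument reproduces the paper's proof step for step: test the weak formulation against $\phi = \theta - \theta_{eq}^\beta$ (and against $v\equiv 1$ for case~(i)) to obtain the $L^2$ energy identities of Lemmas~\ref{lem1} and~\ref{lem1_zero}, apply the coercivity of Propositions~\ref{rigid_coercive} and~\ref{rigid_coercive_zero}, and close by Gronwall/Duhamel, with global existence obtained from Proposition~\ref{rigid_lwp} and continuation. The only imprecision is attributing the three-dimensional Poincar\'e inequality $\mu_\beta \ns{\phi}_{L^2(\Omega)} \le \mathcal{D}_\beta[\phi]$ to Lemma~\ref{eigenvalue}; that lemma supplies the one-dimensional eigenvalue, while the slab estimate actually invoked in the differential inequality is Proposition~\ref{rigid_coercive}.
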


Some remarks are in order.

\begin{Remark}
 When $\beta = (0,0)$ and $\Gamma = \R^2$ we fail to actually show that the solutions decay to equilibrium.  This is due to the lack of coercivity induced by the infinite cross-section.  Instead we have bounds on the deviation of $\theta$ from the equilibrium.  

 When $\beta =(0,0)$ and $\Gamma = (L_1 \mathbb{T}) \times (L_2 \mathbb{T})$ we find that $\theta$ converges exponentially fast to the constant $\theta_{eq}^\beta + \theta_{avg}$.  Recall that $\theta_{eq}^\beta$ is an arbitrary constant when $\beta=(0,0)$.  This means that we can view the equilibrium as being determined by the data in the sense that  \eqref{avg_cond} implies 
\begin{equation}
 \theta_{avg} + \theta_{eq}^\beta = \frac{1}{\abs{\Omega}} \int_\Omega \theta(0,\cdot) dx.
\end{equation}

\end{Remark}

\begin{Remark}
 The appearance of $\mu_\beta$  in Proposition \ref{rigid_coercive}  and $\bar{\mu}_0$  in Proposition \ref{rigid_coercive_zero} show that the exponential decay rates are optimal.  Lemma \ref{eigenvalue} also provides some estimates of $\mu_\beta$ in terms of $\beta \in [0,\infty]^2$.
\end{Remark}

\begin{Remark}
In the case of insulating boundary conditions, $\beta =(0,0)$, the decay rate is independent of the velocity field $u$.  The same is true for any $\beta \in [0,\infty]^2$ if $\bar{\theta}=0$ since in this case $\theta_{eq}^\beta =0$.
\end{Remark}

\begin{Remark} 
 If $g(t)$ decays faster than $e^{-\mu_\beta t} $, then the decay rate of $\theta$ is dictated by $e^{-\mu_\beta t}$.  If $g(t) \sim e^{-\mu_\beta t}$, then 
\begin{equation}
 \|\theta(t,\cdot) -\theta^\beta_{eq} \|_{L^2(\Omega)} \leq  C e^{-\mu_\beta t}\left( 1+t \right)
\end{equation}
for some constant $C$.   If $g(t)$ decays slower than $e^{-\mu_\beta t}$, the decay is dominated by $g(t)$.  
\end{Remark}

In the case of moving boundary, we have the following result.

\begin{theorem}[Moving boundary]\label{thm-mo}
Let $\Omega$ be  defined by \eqref{omega_def} with either $\Gamma = \R^2$ or $\Gamma = (L_1 \mathbb{T}) \times (L_2 \mathbb{T})$. 
Assume that $u:\R_+ \times \Omega \to \R^3$ and $\eta: \R_+ \times \Gamma \to \R$ satisfy \eqref{weak_geo_inclusion} and \eqref{weak_geo_bound} and that 
\begin{equation}
 u \in L^2(\R_+;H^1(\Omega)).
\end{equation}
Assume also that for a.e. $t \in \R_+$   $u$ and $\eta$ satisfy the 
 $\mathcal{A}-$incompressibility condition (defined below \eqref{geometric})
\begin{equation}\label{mo_01}
\diva u = 0 \text{ in }\Omega, 
\end{equation}
the kinematic boundary condition 
\begin{equation}
 \dt \eta = u \cdot \n \text{ on }\Sigma_+,
\end{equation}
the boundary condition 
\begin{equation}\label{mo_02}
u_3 =0 \text{ on } \Sigma_-, 
\end{equation}
and the bound 
\begin{equation}\label{mo_03}
 1/c_0\leq J\leq c_0
\end{equation}
for some constant $c_0>1$, where $J$ is defined by \eqref{ABJ_def}.   Let $\theta_0 \in L^2(\Omega)$.  

Then there exists a unique $\theta: \R_+ \times \Omega \to \R$ that satisfies $(\theta - \theta_{eq}^\beta)\vert_{t=0} = \theta_0$ and is the weak solution to \eqref{geometric1} in the sense of Definition \ref{weak_moving_def} with $T= \infty$.  Moreover, $\theta$ obeys the following estimates, where $\mu_\beta >0$ is as defined in Lemma \ref{eigenvalue} when $\beta \neq (0,0)$ and $\bar{\mu}_0 >0$ is as defined in Proposition \ref{rigid_coercive_zero} when $\beta=(0,0)$.
\begin{itemize}
\item[(i)]  When $\beta_\pm =0$ (pure Neumann boundary conditions) and $\Gamma = (L_1 \mathbb{T}) \times (L_2 \mathbb{T})$, then we define 
\begin{equation}
 \theta_{avg} = \left(\int_\Omega J(0,\cdot)dx \right)^{-1}  \int_\Omega \theta_0 J(0,\cdot)dx.  
\end{equation}
Then 
\begin{equation} 
 \int_\Omega (\theta(t,\cdot) - \theta_{eq}^\beta - \theta_{avg}) J(t,\cdot) dx = 0 \text{ for all }t \in \R_+.
\end{equation}
Also, if we additionally assume that 
\begin{equation}\label{mo_04}
 \frac{1}{c_1} I \le  J \a^T \a \le c_1 I \text{ for some } c_1 >0,
\end{equation}
then regardless of the decay of $u$, $\theta-\theta_{eq}^\beta-\theta_{avg}$ decays to $0$ exponentially fast with the following estimate:
\begin{equation}\label{decay0}
\| (\theta(t,\cdot) - \theta_{eq}^\beta-\theta_{avg}) \sqrt{J(t,\cdot)} \|_{L^2(\Omega)} \leq \exp\left( -\frac{\bar{\mu}_0}{c_0c_1} t\right) \| (\theta_0 -\theta_{avg})\sqrt{J(0,\cdot)}\|_{L^2(\Omega)}. 
\end{equation}

\item[(ii)] When $\beta_\pm =0$ and $\Gamma = \R^2$, we have the estimate
\begin{multline}\label{decay0_2}
 \sup_{t \ge 0} \| (\theta(t,\cdot) - \theta_{eq}^\beta)\sqrt{J(t,\cdot)}   \|_{L^2(\Omega)}^2 + \int_0^\infty \ns{\sqrt{J(t,\cdot)} \naba (\theta(t,\cdot) - \theta_{eq}^\beta)}_{L^2(\Omega)} \\
 \le \frac{3}{2} \ns{(\theta_0 -\theta_{avg})\sqrt{J(0,\cdot)}}_{L^2(\Omega)}. 
\end{multline}

\item[(iii)] When $\beta_+=\infty$  we assume the decay result: 
\begin{equation}\label{decay_assump_2}
\|u(t,\cdot)\|_{L^2(\Omega)} + \|\partial_t \eta(t,\cdot)\|_{L^2(\Gamma)} + \|\nab_\ast \eta(t,\cdot)\|_{H^{1/2}(\Gamma)} \leq  g(t) 
\end{equation}
where $g(t) \to 0$ as $t \to \infty$ (here $\nab_\ast$ is the horizontal gradient on $\Gamma$).   Then $\theta -\theta^\beta_{eq}$ decays with the following estimate
\begin{equation}\label{decay2}
\begin{split}
 \|(\theta(t,\cdot) -\theta^\beta_{eq}) \sqrt{J(t,\cdot)} \|_{L^2(\Omega)} \leq & \exp\left(-\frac{\mu_\beta}{c_0^2} t\right)  \|\theta_0\sqrt{J(0,\cdot)} \|_{L^2(\Omega)}  \\&+ C  \abs{\p_3 \theta_{eq}^\beta} \int_0^t \exp\left(-\frac{\mu_\beta}{c_0^2} (t-s)\right) g(s)ds 
 \end{split}
\end{equation}
where $C>0$ is a universal constant. 

\item[(iv)] When $\beta \in [0,\infty) \times [0,\infty] \backslash \{(0,0)\}$, we assume that
\begin{equation}\label{decay_assump_3}
\|u(t,\cdot)\|_{L^2(\Omega)} + \|\partial_t \eta(t,\cdot)\|_{L^2(\Gamma)} + \| \eta(t,\cdot)\|_{H^{3/2}(\Gamma)} \leq  h(t) 
\end{equation}
where $h(t) \to 0$ as $t \to \infty$.  Then $\theta$ decays to $\theta^\beta_{eq}$ with the following estimate
\begin{multline}\label{decay3}
 \|(\theta(t,\cdot) -\theta^\beta_{eq}) \sqrt{J(t,\cdot)} \|^2_{L^2(\Omega)} \leq  \exp\left(-\frac{\mu_\beta}{2c_0^2} t\right)  \|\theta_0 \sqrt{J(0,\cdot)} \|^2_{L^2(\Omega)}  \\ 
+ C \left( \abs{\p_3 \theta_{eq}^\beta}^2  \frac{c_0^2}{\mu_\beta}  + \frac{\beta_+  }{2}\abs{\bar{\theta}-\theta_{eq}^\beta}^2 \right)   \int_0^t \exp\left( -\frac{\mu_\beta}{2 c_0^2} (t-s)\right)  h^2(s) ds 
\end{multline}
where $C>0$ is a universal constant. 
\end{itemize}
\end{theorem}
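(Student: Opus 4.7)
The plan is to run a weighted $L^2$-energy argument directly on the weak formulation \eqref{weak_form_geo} tested with $v = w := \theta - \theta_{eq}^\beta$, and then specialize the resulting differential inequality to each of the four cases. Global existence will follow from iterating the local theory of Proposition \ref{geo_lwp}, since the $L^2$ bound produced by the energy argument prevents blow-up; uniqueness is immediate from linearity of \eqref{geometric1} in $\theta$ combined with the same energy estimate applied to the difference of two solutions.

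For the energy identity I would first rewrite the duality pairing as
\begin{equation*}
\langle \partial_t \theta, Jw \rangle_\ast = \frac{d}{dt}\int_\Omega \tfrac12 w^2 J \,dx - \int_\Omega \tfrac12 w^2 \partial_t J \,dx,
\end{equation*}
and then process the transport-type terms $\int_\Omega (u \cdot \nabla_\mathcal{A} w)\, w J - \int_\Omega \partial_t \bar\eta\,\tilde d\, \partial_3 w \cdot w$ by integration by parts. The two identities that do the real work are Piola's identity $\partial_k(J\mathcal{A}_{jk}) = 0$ combined with $\mathrm{div}_\mathcal{A} u = 0$, which kills the bulk divergence produced by the convective term, and $\partial_t J = \partial_3(\partial_t \bar\eta\,\tilde d)$, which produces a bulk contribution that exactly cancels the $-\tfrac12\int w^2 \partial_t J$ coming from the time derivative. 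The surviving surface pieces then combine with the bilinear form $C_t^\beta(w,w)$ via the kinematic condition $\partial_t \eta = u \cdot \mathcal{N}$ to yield a manifestly non-negative boundary contribution.

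This leaves the forcing on the right-hand side of \eqref{weak_form_geo}. The bulk forcing $\int_\Omega(\partial_t\bar\eta\,\tilde d\,K \p_3 \theta_{eq}^\beta - u_j \mathcal{A}_{jk} \p_k \theta_{eq}^\beta + \kappa \mathcal{A}_{jl}\p_l(\mathcal{A}_{jk}\p_k\theta_{eq}^\beta))vJ$ involves only factors of $|\p_3 \theta_{eq}^\beta|$ multiplied by $u$, $\partial_t \bar\eta$, or geometric corrections $\mathcal{A} - I$ that are controlled by $\|\eta\|_{H^{3/2}}$ via Lemma \ref{nonlin_ests}. The boundary forcing $F_t^\beta(v)$ is bounded using smallness of $1 - K|\mathcal{N}|$ in terms of $\|\eta\|_{H^{3/2}}$ together with standard trace theory, which is what accounts for the $\beta_+ |\bar\theta - \theta_{eq}^\beta|^2$ factor in \eqref{decay3}. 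Each forcing term is then split by Cauchy--Schwarz so that its ``$w$-part'' is absorbed by $\kappa \int_\Omega J|\nabla_\mathcal{A} w|^2$ or by the boundary coercive term, leaving a small residual of the form $C|\p_3 \theta_{eq}^\beta|^2 g(t)^2$ (in case (iii)) or $C(|\p_3\theta_{eq}^\beta|^2 + \beta_+|\bar\theta-\theta_{eq}^\beta|^2) h(t)^2$ (in case (iv)).

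To close the differential inequality I would invoke the coercivity/Poincar\'e estimates: Lemma \ref{eigenvalue} when $\beta \neq (0,0)$, and Proposition \ref{rigid_coercive_zero} after subtracting the $J$-weighted average when $\beta = (0,0)$. Hypotheses \eqref{mo_03} and \eqref{mo_04} convert coercivity of $\int |\nabla w|^2$ into coercivity of $\kappa \int J|\nabla_\mathcal{A} w|^2$ against $\int w^2 J$, yielding the explicit $c_0$, $c_1$ factors visible in \eqref{decay0}--\eqref{decay3}. Gronwall's inequality then gives the four bounds: for case (i) one first verifies conservation of the $J$-weighted average by testing with $v \equiv 1$ and using incompressibility plus the kinematic condition; case (ii) integrates the energy identity on $\R^2$ without any Poincar\'e step to produce the uniform-in-time bound; case (iii) exploits $F_t^\beta \equiv 0$ when $\beta_+ = \infty$, leaving only the $g(t)$-forcing; and case (iv) keeps the full $F_t^\beta$ contribution, giving the extra $\beta_+|\bar\theta-\theta_{eq}^\beta|^2$ term. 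The main obstacle is the algebraic bookkeeping in the second step: ensuring that every geometric correction produced by passing between $\Omega(t)$ and $\Omega$ either cancels against $\partial_t J$ or combines cleanly with the kinematic boundary term, so that what survives is a genuinely coercive left-hand side paired with a forcing that is genuinely small in the sense controlled by $g(t)$ or $h(t)$.
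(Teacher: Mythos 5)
Your proposal follows the paper's proof almost step for step: test the weak formulation with $w=\theta-\theta_{eq}^\beta$, massage the duality pairing and transport terms using $\partial_k(J\a_{jk})=0$, $\diva u = 0$, $\dt J=\p_3(\dt\bar\eta\tilde d)$, and $\dt\eta=u\cdot\n$ (this is exactly the content of Lemma \ref{geo_energy}, giving \eqref{ene2}), estimate the forcing via Lemmas \ref{eta_poisson}, \ref{eta_small}, \ref{nonlin_ests}, and close with the coercivity of Propositions \ref{geo_coercive}/\ref{geo_coercive_zero} and Gronwall. Testing with $v\equiv 1$ for the conserved $J$-weighted average in case (i), the trivial integration in case (ii), and the extra $F_t^\beta$ bookkeeping in case (iv) are all as in the paper.

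The one place you diverge, and it is worth noting carefully, is in case (iii). You propose to absorb the forcing's $w$-dependence into the dissipation via Young's inequality, leaving a residual of order $|\p_3\theta_{eq}^\beta|^2 g^2(t)$; that produces a differential inequality for $\int|w|^2J$ and, after Gronwall and a square root, a bound of the shape
\begin{equation*}
\|w(t)\sqrt J\|_{L^2}\lesssim e^{-\frac{\mu_\beta}{2c_0^2}t}\|w(0)\sqrt J\|_{L^2}
+\Bigl(\textstyle\int_0^t e^{-\frac{\mu_\beta}{2c_0^2}(t-s)}g^2(s)\,ds\Bigr)^{1/2}.
\end{equation*}
This is a valid decay estimate but it is not \eqref{decay2}: the convolution appears against $g^2$ (not $g$), the square root destroys the linear convolution structure, and the decay rate picks up the extra factor of $2$ in the denominator. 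The paper instead applies Cauchy--Schwarz without absorbing anything: it bounds the forcing by $|\p_3\theta_{eq}^\beta|\,f(t)\,Cg(t)$ with $f(t)=\|w(t)\sqrt J\|_{L^2}$, divides through by $f$, and Gronwalls the linear inequality $f'+ \frac{\mu_\beta}{c_0^2}f\le C|\p_3\theta_{eq}^\beta|\,g$. This keeps the full rate $\mu_\beta/c_0^2$ and yields the convolution $\int_0^t e^{-\mu_\beta(t-s)/c_0^2}g(s)\,ds$ exactly as stated. Note that by Cauchy--Schwarz in $s$ your bound actually \emph{dominates} the paper's, so it is strictly weaker; moreover if $g\in L^1\setminus L^2$ the paper's bound is finite while yours need not be. The absorption strategy you describe is precisely what the paper uses in case (iv), where it is unavoidable because $F_t^\beta$ contributes a term with no $w$-factor available to divide out. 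In case (iii) the paper exploits that $F_t^\beta\equiv 0$ (since $\beta_+=\infty$) to get a cleaner, linear Gronwall argument. If you want to reproduce \eqref{decay2} verbatim, use the factor-out-$f$ route in case (iii) and reserve absorption for case (iv).
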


\begin{Remark}
 Again we fail to prove decay when $\beta =(0,0)$ and $\Gamma = \R^2$ because of the failure of the dissipation to be $L^2$-coercive in this case. 
\end{Remark}

\begin{Remark}
The conditions \eqref{mo_03} and \eqref{mo_04} are satisfied because of  Lemma \ref{eta_small}.  The bound \eqref{mo_03} shows that $\theta(t,\cdot) - \theta_{eq}^\beta$ actually decays except when $\beta=(0,0)$ and $\Gamma = \R^2$..
\end{Remark}

\begin{Remark} In the moving boundary case, the decay rate is not as explicit as the rigid case. We need the decay  of not only velocity $u$ but also moving boundary $\eta$ in order to derive the decay rate of $\theta$ for $\beta>0$. 
\end{Remark}

\begin{Remark}
 Again if $\bar{\theta}=0$ then the decay is not influenced by the asymptotics of $u$.  The asymptotics of $\eta$, and hence $\Omega(t)$, do play a role in the bounds on $J$, which contribute the $c_0$ terms to the decay rate.
\end{Remark}

\begin{Remark}
The reason we include $\norm{\nab_\ast \eta}_{H^{1/2}(\Gamma)}$ in \eqref{decay_assump_2} but $\norm{ \eta}_{H^{3/2}(\Gamma)}$ in \eqref{decay_assump_3} is that in case $(iii)$ we also need $\norm{\eta}_{H^1(\Gamma)}$ to decay, which in particular requires the $L^2$ decay of $\eta$.  This is due to the appearance of the term $1-K \abs{\n}$ in $F_t^\beta$.

This term also causes the difference in the structure of the estimates \eqref{decay3} and \eqref{decay0}, \eqref{decay2}.  The former estimates the decay of the square of the $L^2$ norm while the latter two estimate the decay of the $L^2$ norm.
\end{Remark}

We conclude our discussion of the moving boundary problem with a couple concrete of examples.  In particular, we consider cases $(iii)$ and $(iv)$ under the extra assumption that $u$ and $\eta$ satisfy the system \eqref{geometric}.  Initially suppose that $\sigma >0$ in \eqref{geometric}, which corresponds to surface tension on the moving interface.  When $\Gamma = \R^2$ the work of Beale-Nishida \cite{BN} showed that 
\begin{equation}
 g(t) \le \frac{C}{1+t} \text{ and } h(t) \le \frac{C}{\sqrt{1+t}}
\end{equation}
with the slower rate of decay for $h$ determined by the slow decay of $\norm{\eta(t,\cdot)}_{L^2}$.  When $\Gamma = (L_1 \mathbb{T}) \times (L_2 \mathbb{T})$ the work of Nishida-Teramoto-Yoshihara \cite{nishida_1} showed that 
\begin{equation}
 g(t) \le C e^{-\gamma t}  \text{ and } h(t) \le C e^{-\gamma t}
\end{equation}
for some $\gamma >0$.  Thus we see that the faster decay in the periodic case also leads to faster decay of the passive scalar.

Now consider the case $\sigma =0$.  When $\Gamma = \R^2$ the work of Guo-Tice \cite{GT_inf} showed that
\begin{equation}
 g(t) \le \frac{C}{(1+t)^{(1+\lambda)/2}} \text{ and } h(t) \le \frac{C}{(1+t)^{\lambda/2}},
\end{equation}
where $\lambda \in (0,1)$ is a measure of the negative Sobolev regularity of the initial data.  When $\Gamma = (L_1 \mathbb{T}) \times (L_2 \mathbb{T})$, Guo-Tice showed in \cite{GT_per} that for any $m \ge 4$ there exists a smallness condition on the data for \eqref{geometric} that guarantee that 
\begin{equation}
 g(t) \le \frac{C}{(1+t)^{m}} \text{ and } h(t) \le \frac{C}{(1+t)^{m}}.
\end{equation}
Again we find that the periodicity leads to faster decay rates.

\section{$L^2$-energy estimates}\label{sec_energy}

In this section we present the basic $L^2$ energy estimates.

\subsection{Rigid boundary}

We now derive the $L^2$ energy estimate in the case of a static rigid domain.   In order to state these, we must first define the dissipation functional in terms of $\beta.$   Recall the spaces $H^1_\beta(\Omega)$ defined in \eqref{sobolev_beta}.  We define $\mathcal{D}_\beta : H^1_\beta(\Omega) \to \mathbb{R}$ via
\begin{equation}\label{Dthe}
 \mathcal{D}_\beta[\varphi] = 
\begin{cases}
  \int_\Omega \kappa \abs{\nab \varphi}^2 + \beta_+ \int_{\Sigma_+} \abs{\varphi}^2 + \beta_- \int_{\Sigma_-} \abs{\varphi}^2 &\text{if } \beta \in [0,\infty)^2 \\
  \int_\Omega \kappa\abs{\nab \varphi}^2 + \beta_- \int_{\Sigma_-} \abs{\varphi}^2 &\text{if } \beta_+ = \infty, \beta_- \in [0,\infty) \\
   \int_\Omega \kappa\abs{\nab \varphi}^2 + \beta_+ \int_{\Sigma_+} \abs{\varphi}^2    &\text{if } \beta_- = \infty, \beta_+ \in [0,\infty)  \\
   \int_\Omega \kappa \abs{\nab \varphi}^2     &\text{if } \beta_+ = \beta_-= \infty.
\end{cases}
\end{equation}

\begin{lemma}\label{lem1}
Assume that $u$ satisfies the assumptions of Theorem \ref{thm-ri},  and suppose $\theta$ is a weak solution to \eqref{passive_rigid2}. Then we have 
\begin{equation}\label{ene1}
\frac12\frac{d}{dt} \int_{\Omega} |\theta-\theta^\beta_{eq}|^2 dx  + \mathcal{D}_\beta[\theta - \theta_{eq}^\beta] = -  \p_3 \theta_{eq}^\beta  \int_{\Omega} (\theta-\theta^\beta_{eq})\,  u_3 dx
\end{equation}
where $\mathcal{D}_\theta$ is given by \eqref{Dthe}.
\end{lemma}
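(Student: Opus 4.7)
My plan is to derive \eqref{ene1} by choosing the test function $v = \theta - \theta_{eq}^\beta$ in the weak formulation \eqref{weak_form_rigid}. The assumption $\theta - \theta_{eq}^\beta \in L^2([0,T];H^1_\beta(\Omega))$ from Definition \ref{weak_rigid_def}(i) makes this choice admissible for a.e.\ $t$. Term by term, I expect:
\begin{itemize}
\item The time-derivative pairing $\br{\dt \theta, \theta-\theta_{eq}^\beta}_\ast = \hal \frac{d}{dt}\ns{\theta-\theta_{eq}^\beta}_{L^2(\Omega)}$, which is the standard chain rule for functions satisfying the inclusions of Definition \ref{weak_rigid_def}(i); this uses that $\theta_{eq}^\beta$ is time-independent so that $\dt\theta = \dt(\theta-\theta_{eq}^\beta)$ in $(H^1_\beta(\Omega))^\ast$.
\item The bilinear form contributes $B^\beta(\theta-\theta_{eq}^\beta,\theta-\theta_{eq}^\beta)$, which is exactly the boundary part of $\mathcal{D}_\beta[\theta-\theta_{eq}^\beta]$.
\item The $\kappa$-gradient term gives $\kappa \int_\Omega \abs{\nab(\theta-\theta_{eq}^\beta)}^2$, the bulk part of $\mathcal{D}_\beta[\theta-\theta_{eq}^\beta]$.
\end{itemize}

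The next step is to show that the convective term $\int_\Omega u \cdot \nab(\theta-\theta_{eq}^\beta)(\theta-\theta_{eq}^\beta)$ vanishes. Writing $w = \theta - \theta_{eq}^\beta$, this integral equals $\hal \int_\Omega u\cdot \nab(w^2)$. By the regularity assumptions on $u$ (in $L^2_t H^1_x$ with appropriate trace) and on $w$ (in $L^2_t H^1_x$), I can integrate by parts to obtain
\begin{equation}
\hal \int_\Omega u \cdot \nab(w^2) = \hal \int_{\Sigma_+\cup\Sigma_-} u_3 w^2 \,\nu_3 - \hal \int_\Omega (\diverge u) w^2,
\end{equation}
both of which vanish by the hypotheses \eqref{ri_01} ($\diverge u = 0$) and \eqref{ri_02} ($u_3=0$ on $\Sigma_\pm$). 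In the horizontally periodic case, horizontal boundary contributions disappear by periodicity; in the infinite case, $H^1$-regularity and an approximation by compactly-supported functions take care of them.

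For the right-hand side, I exploit the explicit form \eqref{eq} of $\theta_{eq}^\beta$: it depends only on $x_3$, with a constant vertical derivative $\p_3 \theta_{eq}^\beta$. Therefore
\begin{equation}
-\int_\Omega u \cdot \nab \theta_{eq}^\beta \,(\theta-\theta_{eq}^\beta) = -\p_3 \theta_{eq}^\beta \int_\Omega u_3 (\theta-\theta_{eq}^\beta),
\end{equation}
which matches the right side of \eqref{ene1}. Collecting these identities yields the claimed energy equality.

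The main subtlety is justifying the chain-rule identity for $\br{\dt\theta, \theta-\theta_{eq}^\beta}_\ast$, since $\dt\theta$ lies only in $L^2_t(H^1_\beta)^\ast$ while the test function lies in $L^2_t H^1_\beta$; this is handled by the classical abstract result that functions $w \in L^2 H^1$ with $\dt w \in L^2 (H^1)^\ast$ satisfy $\frac{d}{dt}\hal\ns{w}_{L^2} = \br{\dt w, w}_\ast$ after identification with a continuous $L^2$-valued representative. A secondary concern is the rigorous integration-by-parts in the horizontally infinite case $\Gamma = \R^2$, which I would handle by truncating horizontally, applying the divergence theorem on the truncated slab, and passing to the limit using the $H^1$-integrability of $u$ and $w$.
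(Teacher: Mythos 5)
Your proof is correct and follows the same approach as the paper: test with $v = \theta - \theta_{eq}^\beta$ in \eqref{weak_form_rigid}, identify the time-derivative pairing as $\hal\frac{d}{dt}\ns{\theta-\theta_{eq}^\beta}_{L^2}$, show the convective term vanishes using $\diverge u = 0$ and $u_3 = 0$ on $\Sigma_\pm$, and reduce the remaining terms to $\mathcal{D}_\beta$ and the right-hand side via $\nab\theta_{eq}^\beta = \p_3\theta_{eq}^\beta e_3$. The additional discussion of the abstract chain rule and the horizontal truncation in the $\Gamma=\R^2$ case fills in details the paper leaves implicit but does not change the argument.
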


\begin{proof}

We may use $\theta - \theta_{eq}^\beta$ as a test function in \eqref{weak_form_rigid} to deduce that
\begin{equation}
 I + II = III,
\end{equation}
where 
\begin{equation}
 I = \br{\dt \theta,\theta - \theta_{eq}^\beta}_\ast,
\end{equation}
\begin{equation}
 II = B^\beta(\theta - \theta_{eq}^\beta,\theta - \theta_{eq}^\beta) + \int_\Omega \kappa \abs{\nab (\theta - \theta_{eq}^\beta)}^2 + u \cdot \nab(\theta - \theta_{eq}^\beta) (\theta - \theta_{eq}^\beta),
\end{equation}
and
\begin{equation}
 III = - \int_\Omega u \cdot \nab \theta_{eq}^\beta (\theta - \theta_{eq}^\beta).
\end{equation}

A standard computation shows that
\begin{equation}
 I = \hal \frac{d}{dt} \int_{\Omega} |\theta-\theta^\beta_{eq}|^2 dx.
\end{equation}
We may use the fact that $\diverge u=0$ in $\Omega$ and $u_3 =0$ on $\p \Sigma_\pm$ to compute 
\begin{equation}
 \int_\Omega  u \cdot \nab(\theta - \theta_{eq}^\beta) (\theta - \theta_{eq}^\beta) = \int_\Omega  u \cdot \nab \frac{(\theta - \theta_{eq}^\beta)^2}{2}  =0.
\end{equation}
Then 
\begin{equation}
 II = \mathcal{D}_\beta[\theta - \theta_{eq}^\beta].
\end{equation}
Finally, we compute 
\begin{equation}
 III =  -  \p_3 \theta_{eq}^\beta  \int_{\Omega} (\theta-\theta^\beta_{eq})\,  u_3 dx.
\end{equation}
The  equality  \eqref{ene1} then follows by combining these.

\end{proof}

We remark that the right-hand-side of \eqref{ene1} does not have a definite sign in general and it includes not only the temperature fluctuation but also the vertical velocity $u_3$.  

In the case of the Neumann boundary condition, when $\beta=(0,0)$,  the equilibrium is given by  $\theta_{eq}^\beta=C$.  In this case  the right-hand-side of \eqref{ene1} is zero, and hence we obtain the equality 
\begin{equation}\label{ene1_n}
\frac12\frac{d}{dt} \int_{\Omega} |\theta - \theta_{eq}^\beta|^2 dx  + \kappa   \int_{\Omega} |\nabla \theta|^2 dx =0 . 
\end{equation}
In order to deduce decay information we need a slightly different version of this equality.

\begin{lemma}\label{lem1_zero}
Let $\beta = (0,0)$ and $\Gamma = (L_1 \mathbb{T}) \times (L_2 \mathbb{T})$.  Assume that $u$ satisfies the assumptions of Theorem \ref{thm-ri},  and suppose $\theta$ is a weak solution to \eqref{passive_rigid2}.   Then we have 
\begin{equation}\label{l1z_01}
\frac12\frac{d}{dt} \int_{\Omega} |\theta-\theta^\beta_{eq} -C |^2 dx  + \mathcal{D}_\beta[\theta - \theta_{eq}^\beta -C] = 0
\end{equation}
for any $C \in \R$.  Also, 
\begin{equation}\label{l1z_02}
  \int_{\Omega} (\theta(t,\cdot)-\theta^\beta_{eq})dx = \int_\Omega \theta_0 dx
\end{equation}
 for all $t \in [0,T]$.
\end{lemma}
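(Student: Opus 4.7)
The plan is to exploit two facts specific to the case $\beta=(0,0)$ with $\Gamma=(L_1 \mathbb{T})\times(L_2\mathbb{T})$: first, that $\theta_{eq}^\beta$ is a spatial constant, so $\nab\theta_{eq}^\beta\equiv 0$ and $B^\beta\equiv 0$ in the weak formulation \eqref{weak_form_rigid}; second, that $\Omega$ is bounded, so constants lie in $H^1(\Omega) = H^1_\beta(\Omega)$ and may be used freely in shifting test functions without leaving the admissible class. Both identities then follow from judicious choices of test function in \eqref{weak_form_rigid}, along the same lines as the proof of Lemma \ref{lem1}.

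For \eqref{l1z_01}, I would test \eqref{weak_form_rigid} against $v:=\theta-\theta_{eq}^\beta-C \in H^1_\beta(\Omega)$. Since $\theta_{eq}^\beta$ and $C$ are constant in $x$, we have $\nab v=\nab(\theta-\theta_{eq}^\beta)$. The $B^\beta$-term vanishes because $\beta=(0,0)$, and the force term vanishes because $\nab\theta_{eq}^\beta=0$, leaving
\begin{equation*}
\br{\dt\theta,v}_\ast + \int_\Omega \kappa|\nab v|^2 + \int_\Omega (u\cdot\nab v)\,v = 0.
\end{equation*}
Following the proof of Lemma \ref{lem1}, the convection integral vanishes by writing $(u\cdot\nab v)v = \hal u\cdot\nab(v^2)$ and integrating by parts with $\diverge u=0$ in $\Omega$ and $u_3=0$ on $\Sigma_\pm$. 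The dual pairing becomes $\hal\tfrac{d}{dt}\|v\|_{L^2}^2$ via the standard identification, since $\theta_{eq}^\beta$ and $C$ are time-independent and hence $\dt\theta = \dt v$ in $(H^1_\beta)^\ast$. Recognizing $\mathcal{D}_\beta[v]=\int_\Omega \kappa|\nab v|^2$ for $\beta=(0,0)$ gives \eqref{l1z_01}.

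For \eqref{l1z_02}, I would test \eqref{weak_form_rigid} against $v\equiv 1 \in H^1_\beta(\Omega)$. All terms containing $\nab v$ drop out, $B^\beta$ vanishes, and the force term vanishes because $\nab\theta_{eq}^\beta=0$. The remaining convection term $\int_\Omega u\cdot\nab(\theta-\theta_{eq}^\beta) = \int_\Omega u\cdot\nab\theta$ vanishes after one integration by parts using $\diverge u=0$ and $u_3=0$ on $\Sigma_\pm$. Thus $\br{\dt\theta,1}_\ast = 0$ for a.e.\ $t$, which translates to $\tfrac{d}{dt}\int_\Omega(\theta-\theta_{eq}^\beta)=0$ by the standard identification. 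Integrating in time and using the initial condition $(\theta-\theta_{eq}^\beta)|_{t=0}=\theta_0$ yields \eqref{l1z_02}.

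I do not expect a serious obstacle. The only delicate points are that $v = \theta-\theta_{eq}^\beta-C$ and $v\equiv 1$ genuinely lie in $H^1_\beta(\Omega)$, which is exactly where the hypothesis $\Gamma=(L_1\mathbb{T})\times(L_2\mathbb{T})$ (forcing $\Omega$ bounded) is essential, and the passage from the dual pairing to a classical time derivative, which is standard for the regularity class in Definition \ref{weak_rigid_def} and was already used implicitly in Lemma \ref{lem1}.
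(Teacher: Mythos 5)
Your proposal is correct and matches the paper's proof: the paper also establishes \eqref{l1z_01} by running the argument of Lemma \ref{lem1} with the test function $\theta - \theta_{eq}^\beta - C \in H^1_\beta(\Omega) = H^1(\Omega)$, and establishes \eqref{l1z_02} by testing against $v \equiv 1$ to obtain $\frac{d}{dt}\int_\Omega(\theta - \theta_{eq}^\beta)\,dx = 0$ and integrating in time. Your remarks about boundedness of $\Omega$ (so constants lie in $H^1$) and the vanishing of $B^\beta$, the force term, and the convection term are exactly the ingredients the paper relies on.
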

\begin{proof}
 The proof  of \eqref{l1z_01} is the same as for Lemma \ref{lem1}, except that we use $\theta - \theta_{eq}^\beta -C \in H^1_\beta(\Omega) = H^1(\Omega)$ as the test function.  To prove \eqref{l1z_02} we use $1 \in H^1(\Omega)$ as the test function and argue similarly to deduce that 
\begin{equation}
 \frac{d}{dt}   \int_{\Omega} (\theta(t,\cdot)-\theta^\beta_{eq})dx  =0,
\end{equation}
which yields  \eqref{l1z_02} upon integrating in time.

\end{proof}

\subsection{Moving boundary}

We now seek to derive the $L^2$ energy estimate in the case of a moving boundary.  In this case  we define the time-dependent dissipation functional   $\mathcal{M}^t_\beta : H^1_\beta(\Omega) \to \mathbb{R}$ via
\begin{equation}\label{Dthe2}
 \mathcal{M}^t_\beta[\varphi] = 
\begin{cases}
  \int_\Omega \kappa J \abs{\naba \varphi}^2 + \beta_+ \int_{\Sigma_+} \abs{\varphi}^2 \abs{\n} + \beta_- \int_{\Sigma_-} \abs{\varphi}^2 K &\text{if } \beta \in [0,\infty)^2 \\
  \int_\Omega \kappa J \abs{\naba \varphi}^2 + \beta_- \int_{\Sigma_-} \abs{\varphi}^2 K &\text{if } \beta_+ = \infty, \beta_- \in [0,\infty) \\
   \int_\Omega \kappa J \abs{\naba \varphi}^2 + \beta_+ \int_{\Sigma_+} \abs{\varphi}^2 \abs{\n}    &\text{if } \beta_- = \infty, \beta_+ \in [0,\infty)  \\
   \int_\Omega \kappa J \abs{\naba \varphi}^2     &\text{if } \beta_+ = \beta_-= \infty.
\end{cases}
\end{equation}
Here we have written $\mathcal{M}^t_\beta$ to emphasize the dependence on time $t$: $J$, $K$, $\a$, and $\n$ are all understood to be evaluated at time $t$ in \eqref{Dthe2}

We can derive the $L^2$ energy estimate.  We will employ Lemma \ref{geometric_ids} for many of the calculations.
 
\begin{lemma}\label{geo_energy}
Suppose $u$ and $\eta$ satisfy the assumptions of Theorem \ref{thm-mo} and that  $\theta$ is a weak solution to \eqref{geometric1}.  Then 
\begin{multline}\label{ene2}
\hal \frac{d}{dt} \int_\Omega  | \theta  -\theta_{eq}^\beta|^2J dx + \mathcal{M}_\beta^t[\theta - \theta_{eq}^\beta ] = F_t^\beta(\theta- \theta_{eq}^\beta) \\
+  \p_3 \theta_{eq}^\beta
 \int_\Omega     (\theta -\theta_{eq}^\beta) \left\{   \dt \bar{\eta} \tilde{d}  -u_j J \a_{j3}  +\kappa J \a_{jl} \p_l\a_{j3}  \right\}  dx.
\end{multline}
where  $\mathcal{M}^t_\beta $ is given by \eqref{Dthe2} and $F_t^\beta$ is given by \eqref{force_def}.

\end{lemma}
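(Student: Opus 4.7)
The plan is to test the weak formulation \eqref{weak_form_geo} against $v = \theta - \theta_{eq}^\beta \in H^1_\beta(\Omega)$, interpret the resulting identity, and then simplify the right-hand side by exploiting the explicit form of the equilibrium profile. Finally, the convective and moving-frame correction terms on the left should combine to zero after an integration by parts.

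First I would set $w = \theta - \theta_{eq}^\beta$ and substitute $v=w$ into \eqref{weak_form_geo}. For the dual pairing I would invoke the standard product-rule identity for elements of $L^2 H^1_\beta$ with derivatives in $L^2 (H^1_\beta)^\ast$, namely
\begin{equation}
\br{\dt \theta, J w}_\ast = \hal \frac{d}{dt}\int_\Omega J\abs{w}^2 - \hal \int_\Omega \dt J \abs{w}^2.
\end{equation}
The bilinear form $C_t^\beta(w,w)$ together with the term $\int_\Omega \kappa J \abs{\naba w}^2$ is exactly $\mathcal{M}_\beta^t[w]$ by comparing the definitions in \eqref{Dthe2} and the definition of $C_t^\beta$.

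Next I would simplify the equilibrium contributions on the right-hand side of \eqref{weak_form_geo}. Since $\theta_{eq}^\beta$ is affine in $x_3$ by \eqref{eq}, $\p_k \theta_{eq}^\beta = \delta_{k3} \p_3 \theta_{eq}^\beta$ and $\p_3 \theta_{eq}^\beta$ is a constant. Hence
\begin{equation}
u_j \a_{jk} \p_k \theta_{eq}^\beta = u_j \a_{j3} \p_3 \theta_{eq}^\beta,\quad \kappa \a_{jl} \p_l(\a_{jk}\p_k \theta_{eq}^\beta) = \kappa \p_3\theta_{eq}^\beta\, \a_{jl} \p_l \a_{j3},
\end{equation}
and using $JK = 1$, the right-hand side integral becomes the desired expression $\p_3\theta_{eq}^\beta \int_\Omega w\{\dt \bar{\eta} \tilde{d} - u_j J \a_{j3} + \kappa J \a_{jl}\p_l \a_{j3}\}$. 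The force term $F_t^\beta(w)$ carries over unchanged.

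The main obstacle is the cancellation of the remaining three terms on the left, which are
\begin{equation}
-\hal \int_\Omega \dt J \abs{w}^2 + \int_\Omega u \cdot \naba w\, w\, J - \int_\Omega \dt \bar{\eta} \tilde{d}\, \p_3 w\, w.
\end{equation}
For the convective term I would write $u \cdot \naba w \, w = \hal u_j \a_{jk}\p_k w^2$, integrate by parts, and use the Piola identity $\p_k(J \a_{jk}) = 0$ (to be taken from Lemma \ref{geometric_ids}) together with $\diva u = 0$, so that the interior part vanishes. The boundary contributions reduce to $\Sigma_+$, because on $\Sigma_-$ one has $\tilde{d} = 0$, so $A = B = 0$, which forces $J\a_{j3}u_j = u_3 = 0$ there. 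On $\Sigma_+$ the outward normal component gives $J \a_{j3} u_j = u \cdot \n = \dt \eta$ by the kinematic condition. An analogous integration by parts in $x_3$ applied to $\int_\Omega \dt \bar{\eta} \tilde{d} \p_3 w\, w$ produces $-\hal \int_\Omega \p_3(\dt \bar{\eta} \tilde{d}) w^2 + \hal \int_{\Sigma_+} \dt \eta\, w^2$, and since $\dt J = \p_3(\dt \bar{\eta}\tilde{d})$ and $\bar{\eta}\vert_{\Sigma_+} = \eta$, the three terms cancel exactly. Combining everything yields \eqref{ene2}.
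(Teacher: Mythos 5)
Your proposal is correct and follows essentially the same route as the paper: test with $w=\theta-\theta_{eq}^\beta$, use the product-rule identity for the dual pairing, identify $\mathcal{M}_\beta^t$, simplify the equilibrium terms using $\p_k\theta_{eq}^\beta=\delta_{k3}\p_3\theta_{eq}^\beta$, and show the three remaining left-hand terms cancel via the Piola identity $\p_k(J\a_{jk})=0$, $\mathcal{A}$-incompressibility, the identities $J\a_{j3}=\n_j$ on $\Sigma_+$ and $J\a_{j3}=(e_3)_j$ on $\Sigma_-$, the kinematic condition, and $\dt J=\p_3(\dt\bar\eta\tilde d)$. The only cosmetic difference is that the paper groups the dual pairing with the convection and moving-frame corrections into a single block $I=I_1+I_2$ and shows $I_2=0$, whereas you split out the $-\tfrac12\int\dt J\abs{w}^2$ piece first; also note the small typo in writing the convective integrand as $\tfrac12 u_j\a_{jk}\p_k w^2$, which should carry the $J$ so that the Piola identity applies.
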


\begin{proof}
We use $\theta - \theta_{eq}^\beta \in H^1_\beta(\Omega)$ as a test function in \eqref{weak_form_geo} to derive the equality
\begin{equation}
I + II = III, 
\end{equation}
where 
\begin{align}
I&=  \br{\dt \theta,J (\theta - \theta_{eq}^\beta)}_\ast  + \int_\Omega     J(\theta -\theta_{eq}^\beta) \left\{  - \dt \bar{\eta} \tilde{d} K \p_3 
  \left( \theta  -\theta_{eq}^\beta \right)+ u_j \a_{jk} \p_k\left( \theta  -\theta_{eq}^\beta \right) \right\}  dx,\label{I} \\
II&= C_t^\beta(\theta-\theta_{eq}^\beta,\theta-\theta_{eq}^\beta) + \int_\Omega \kappa J \abs{\naba (\theta-\theta_{eq}^\beta)}^2 dx, \label{II} \\
III&= F_t^\beta(\theta - \theta_{eq}^\beta) +\int_\Omega     J(\theta -\theta_{eq}^\beta) \left\{   \dt \bar{\eta} \tilde{d} K \p_3 \theta_{eq}^\beta  -u_j \a_{jk} \p_k\theta_{eq}^\beta +\kappa \a_{jl} \p_l \left(\a_{jk} \p_k \theta_{eq}^\beta \right) \right\}  dx. \label{III}
\end{align}

Employing \eqref{id1} and the identity $JK=1$, we rewrite $I$ as 
\begin{equation}
\begin{split}
I=I_1+I_2& =: \frac12\frac{d}{dt} \int_\Omega J | \theta  -\theta_{eq}^\beta|^2 dx\\
&  +\int_\Omega -\frac{ \p_t J  | \theta  -\theta_{eq}^\beta|^2 }{2} -  \dt \bar{\eta} \tilde{d}\p_3 \frac{  | \theta  -\theta_{eq}^\beta|^2  }{2}   + 
u_j  \p_k\left[ J\a_{jk}   \frac{|\theta  -\theta_{eq}^\beta|^2}{2} \right]  dx. 
\end{split}
\end{equation}
Identity \eqref{id4} shows that $u_j J \mathcal{A}_{jk} e_3 \cdot e_k = u_3 =0$ on $\Sigma_d$, and we know that $\tilde{d} =0$ on $\Sigma_d$, so we may integrate by parts to see that
\begin{equation}
\begin{split}
I_2&= \int_\Omega -\frac{ \p_t J  | \theta  -\theta_{eq}^\beta|^2 }{2} -  \dt \bar{\eta} \tilde{d}\p_3 \frac{  | \theta  -\theta_{eq}^\beta|^2  }{2}   + 
u_j  \p_k\left[ J\a_{jk}   \frac{|\theta  -\theta_{eq}^\beta|^2}{2} \right]  dx \\
&=   \int_\Omega -\frac{ \p_t J  | \theta  -\theta_{eq}^\beta|^2 }{2} +\p_3( \dt \bar{\eta} \tilde{d}) \frac{  | \theta  -\theta_{eq}^\beta|^2  }{2} dx -  \int_\Omega \p_k u_j J\a_{jk}   \frac{|\theta  -\theta_{eq}^\beta|^2}{2}   dx \\
&\quad+\int_\Sigma -\p_t\eta  \frac{|\theta  -\theta_{eq}^\beta|^2}{2}   +  u_j  J\a_{jk} e_3\cdot e_k  \frac{|\theta  -\theta_{eq}^\beta|^2}{2} d\sigma_x . 
\end{split}
\end{equation}
The incompressibility condition \eqref{mo_01} is equivalent to  $ \a_{jk} \p_k u_j =0 $, and an easy computation shows that $\dt J = \p_3(\dt \bar{\eta}d)$.   Since $\dt \eta= u\cdot \n$ on $\Sigma$, we may use  \eqref{id2} and \eqref{id3} to  deduce that $I_2=0$.  Hence, 
\begin{equation}\label{I2}
I= \frac12\frac{d}{dt} \int_\Omega  | \theta  -\theta_{eq}^\beta|^2J dx. 
\end{equation}

We simply rewrite $II$ in \eqref{II} as
\begin{equation}\label{II2}
 II = \mathcal{M}^t_\beta[\theta  -\theta_{eq}^\beta]. 
\end{equation}

It remains to handle $III$ in \eqref{III}.  Since $\p_k\theta_{eq}^\beta=\delta_{3k}\p_k\theta_{eq}^\beta$ and  $\p_3\theta_{eq}^\beta$ is constant, we can rewrite 
 \begin{equation}
 \begin{split}\label{III2}
 III  &= F_t^\beta(\theta- \theta_{eq}^\beta) + \int_\Omega     J(\theta -\theta_{eq}^\beta) \left\{   \dt \bar{\eta} \tilde{d} K \p_3 \theta_{eq}^\beta  -u_j \a_{jk} \p_k\theta_{eq}^\beta +\kappa \a_{jl} \p_l \left(\a_{jk} \p_k \theta_{eq}^\beta \right) \right\}  dx \\
 =&  F_t^\beta(\theta- \theta_{eq}^\beta) + \p_3 \theta_{eq}^\beta 
 \int_\Omega     (\theta -\theta_{eq}^\beta) \left\{   \dt \bar{\eta} \tilde{d}  -u_j J \a_{j3}  +\kappa J \a_{jl} \p_l\a_{j3}  \right\}  dx.
 \end{split}
 \end{equation}
 Combining \eqref{I2}, \eqref{II2}, \eqref{III2} then yields the desired result. 
\end{proof}

As in the case of rigid boundary, if the insulating boundary condition is imposed ($\beta=(0,0)$), the $L^2$ energy estimates \eqref{ene2} reduce to 
\begin{equation}
\begin{split}\label{ene2_n}
\frac12\frac{d}{dt} \int_{\Omega} |\theta - \theta_{eq}^\beta|^2 J dx  + \kappa \int_{\Omega} | \naba \theta |^2 J dx=0 . 
\end{split}
\end{equation}
Again we need a slightly different version of this estimate for it to be useful in decay analysis.

\begin{lemma}\label{geo_energy_zero}
Let $\beta =(0,0)$ and $\Gamma = (L_1 \mathbb{T}) \times (L_2 \mathbb{T})$.  Suppose $u$ and $\eta$ satisfy the assumptions of Theorem \ref{thm-mo} and that  $\theta$ is a weak solution to \eqref{geometric1}.  Then for any constant $C \in \R$, 
\begin{equation}\label{gez_01}
\hal \frac{d}{dt} \int_\Omega  | \theta  -\theta_{eq}^\beta -C |^2J dx + \mathcal{M}_\beta^t[\theta - \theta_{eq}^\beta -C ] = 0
\end{equation}
where  $\mathcal{M}^t_\beta $ is given by \eqref{Dthe2}.  Also, if we write 
\begin{equation}\label{gez_02}
 \theta_{avg} = \left(\int_{\Omega} J(0,\cdot) dx \right)^{-1} \int_\Omega \theta_0 J(0,\cdot) dx,
\end{equation}
then 
\begin{equation}\label{gez_03}
 \int_\Omega (\theta(t,\cdot) - \theta_{eq}^\beta - \theta_{avg}) J(t,\cdot) dx = 0
\end{equation}
for all $t\in [0,T]$.
\end{lemma}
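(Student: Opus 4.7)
The plan is to mimic the proof of Lemma \ref{geo_energy}, exploiting the fact that when $\beta=(0,0)$ the boundary form $C^\beta_t$ and forcing $F^\beta_t$ vanish identically, the equilibrium $\theta^\beta_{eq}$ is spatially constant, and constants belong to $H^1_\beta(\Omega)=H^1(\Omega)$. The two claims \eqref{gez_01} and \eqref{gez_03} are obtained from two different test-function choices in the weak formulation \eqref{weak_form_geo}.

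For \eqref{gez_01}, I would insert $v=\theta-\theta^\beta_{eq}-C\in H^1(\Omega)$ into \eqref{weak_form_geo}. Since $C$ is a constant, $\naba v=\naba(\theta-\theta^\beta_{eq})$ and $\p_3 v=\p_3(\theta-\theta^\beta_{eq})$, so the splitting into $I=I_1+I_2$, $II$, $III$ from Lemma \ref{geo_energy} proceeds verbatim with $\theta-\theta^\beta_{eq}$ replaced by $\theta-\theta^\beta_{eq}-C$; the cancellation $I_2=0$ hinges only on the hypotheses \eqref{mo_01}, \eqref{mo_02}, the kinematic condition, and the geometric identities used there, all of which still apply. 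This yields $I=\tfrac12\frac{d}{dt}\int_\Omega|\theta-\theta^\beta_{eq}-C|^2J\,dx$ and $II=\mathcal{M}^t_\beta[\theta-\theta^\beta_{eq}-C]$, while $III$ vanishes because $F^\beta_t\equiv 0$ and $\p_3\theta^\beta_{eq}=0$. Combining gives \eqref{gez_01}.

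For \eqref{gez_03}, I would test instead with $v\equiv 1\in H^1(\Omega)$. Then $\naba 1=0$, $C^\beta_t(\cdot,1)=0$, $F^\beta_t(1)=0$, and the right-hand side of \eqref{weak_form_geo} vanishes by $\nab\theta^\beta_{eq}=0$, leaving
\[\br{\dt\theta}{J}_\ast + \int_\Omega\left(u_j\a_{jk}\p_k(\theta-\theta^\beta_{eq}) - \dt\bar\eta\,\tilde d\, K\,\p_3(\theta-\theta^\beta_{eq})\right)J\,dx=0.\]
The identities invoked for $I_2$ in Lemma \ref{geo_energy} (the Piola identity $\p_k(J\a_{jk})=0$, the formula $\dt J=\p_3(\dt\bar\eta\,\tilde d)$, the boundary relation $u_jJ\a_{j3}=u\cdot\n$ on $\Sigma_+$ combined with $\dt\eta=u\cdot\n$, and $u_3|_{\Sigma_-}=0$ together with $\tilde d|_{\Sigma_-}=0$) show by integration by parts that the volume integral equals $\int_\Omega(\theta-\theta^\beta_{eq})\dt J\,dx$ with all boundary contributions cancelling. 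Combined with the distributional product rule $\frac{d}{dt}\int_\Omega(\theta-\theta^\beta_{eq})J\,dx=\br{\dt\theta}{J}_\ast+\int_\Omega(\theta-\theta^\beta_{eq})\dt J\,dx$, this gives $\frac{d}{dt}\int_\Omega(\theta-\theta^\beta_{eq})J\,dx=0$. Volume conservation $\int_\Omega J(t,\cdot)\,dx=\int_\Omega J(0,\cdot)\,dx$ follows in the same spirit by integrating $\dt J$ over $\Omega$ and combining the kinematic condition with $\diva u=0$ and the Piola identity to show $\int_{\Sigma_+}u\cdot\n\,dx'=0$; together with the definition \eqref{gez_02} of $\theta_{avg}$, these two conservation laws yield \eqref{gez_03}.

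The main obstacle is the rigorous justification of the product rule for $\br{\dt\theta}{J}_\ast$: since $\dt\theta$ only lies in $(H^1_\beta(\Omega))^\ast$, one cannot freely pair it with a time-dependent element of $H^1(\Omega)$ and differentiate. This is handled by the continuity $\theta-\theta^\beta_{eq}\in C^0([0,T];L^2)$ noted after Definition \ref{weak_moving_def}, combined with the $W^{1,\infty}_t$ regularity of $J$ inherited from \eqref{weak_geo_inclusion}--\eqref{weak_geo_bound} via Lemmas \ref{eta_poisson} and \ref{eta_small}, which together permit the standard density argument extending the product rule from smooth approximations to the weak solution.
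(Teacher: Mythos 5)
Your proof is correct and follows essentially the same route as the paper: test the weak formulation \eqref{weak_form_geo} with $v=\theta-\theta^\beta_{eq}-C$ to get \eqref{gez_01}, noting that for $\beta=(0,0)$ the equilibrium is constant (so the right-hand side vanishes), $C^\beta_t\equiv 0$, and $F^\beta_t\equiv 0$; then test with $v\equiv 1$ and use the Piola identity, $\dt J=\p_3(\dt\bar\eta\tilde d)$, the kinematic condition, and $u_3|_{\Sigma_-}=0$ to obtain $\tfrac{d}{dt}\int_\Omega(\theta-\theta^\beta_{eq})J\,dx=0$ together with the volume conservation $\tfrac{d}{dt}\int_\Omega J\,dx=0$, from which \eqref{gez_03} follows via the definition \eqref{gez_02}. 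The paper states these steps tersely and does not elaborate on the distributional product rule for $\br{\dt\theta}{J}_\ast$ with a time-dependent weight, which you correctly flag as the main technical point; the regularity of $\eta$ and hence $J$ provided by \eqref{weak_geo_inclusion} and Lemmas \ref{eta_poisson}--\ref{eta_small}, together with \eqref{weak_data}, does suffice to justify it by density, as you indicate.
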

\begin{proof}
 To prove \eqref{gez_01} we use $\theta(t,\cdot) - \theta_{eq}^\beta - C$ as a test function and argue as in the proof of Lemma \ref{geo_energy}.  To prove \eqref{gez_03} we argue similarly, using $1$ as a test function to derive the equality
\begin{equation}
 \frac{d}{dt} \int_\Omega J (\theta(t,\cdot) - \theta_{eq}^\beta) = 0.
\end{equation}
We also compute, using Lemma \ref{geometric_ids},
\begin{multline}
 \frac{d}{dt} \int_\Omega J(t,\cdot) = \int_\Omega \dt J = \int_\Omega \p_3(\dt \bar{\eta} \tilde{d}) = \int_{\Sigma_+} \dt \eta 
= \int_{\Sigma_+} u \cdot \n \\
= \int_{\Sigma_+} u_j J \a_{j3} = \int_\Omega \diva u J=0.
\end{multline}
Then \eqref{gez_03} follows by integrating these identities and using the definition \eqref{gez_02}.

\end{proof}

\section{Coercivity of the dissipation}

In order for the $L^2$ energy estimates of the previous section to give rise to decay results, we need for the dissipation terms $\mathcal{D}_\beta$, defined by  \eqref{Dthe} for rigid boundaries, and $\mathcal{M}_\beta^t$, defined by \eqref{Dthe2} for moving boundaries, to be $L^2$-coercive.  In this section we pursue the proof of this coercivity estimate, which is essentially a Poincar\'e inequality.

\subsection{One-dimensional analysis}

When $\beta \neq (0,0)$, our coercivity estimates will be based on a corresponding coercivity estimate in the vertical direction.  Our aim now is to prove this estimate.

Let $H^1_\beta((-d,0))$ denote the space 
\begin{equation}
H^1_\beta((-d,0))=
 \begin{cases}
  H^1((-d,0)) &\text{if } \beta \in [0,\infty)^2 \\
  \{\zeta \in H^1((-d,0)) \;\vert\; \zeta(0) =0  \}  &\text{if } \beta_+ = \infty, \beta_- \in [0,\infty)  \\
  \{\zeta \in H^1((-d,0)) \;\vert\; \zeta(-d) =0  \}  &\text{if } \beta_- = \infty, \beta_+ \in [0,\infty)  \\
  \{\zeta \in H^1((-d,0)) \;\vert\; \zeta(0) = \zeta(-d) =0  \}  &\text{if } \beta_+ = \beta_-= \infty.
 \end{cases}
\end{equation}
We define $\mathfrak{D}_\beta : H^1_\beta((-d,0)) \to \mathbb{R}$ via
\begin{equation}
 \mathfrak{D}_\beta[\zeta] = 
\begin{cases}
  \int_{-d}^0 \kappa\abs{\zeta'}^2 +  \beta_+ \abs{\zeta(0)}^2   + \beta_- \abs{\zeta(-d)}^2 &\text{if } \beta \in [0,\infty)^2 \\
  \int_{-d}^0 \kappa\abs{\zeta'}^2   + \beta_- \abs{\zeta(-d)}^2 &\text{if } \beta_+ = \infty, \beta_- \in [0,\infty) \\
   \int_{-d}^0 \kappa\abs{\zeta'}^2 +  \beta_+ \abs{\zeta(0)}^2    &\text{if } \beta_- = \infty, \beta_+ \in [0,\infty)  \\
   \int_{-d}^0 \kappa \abs{\zeta'}^2   &\text{if } \beta_+ = \beta_-= \infty.
\end{cases}
\end{equation}

We now prove a variational principle.

\begin{lemma}\label{eigenvalue}
Let 
\begin{equation}
 \mathcal{C} = \{\zeta \in  H^1_\beta((-d,0)) \;\vert\; \norm{\zeta}_{L^2((-d,0))} =1\}.
\end{equation}
Then there exists $\zeta_\beta \in \mathcal{C}$ such that 
\begin{equation}
 \mathfrak{D}_\beta[\zeta_\beta] = \min_{\mathcal{C}} \mathfrak{D}_\beta :=\mu_\beta.
\end{equation}
  Moreover, 
\begin{equation}
 \mu_\beta \ge 
\begin{cases}
\min\left\{\frac{\kappa \pi^2}{4d^2}, \frac{\beta_+ \beta_-}{\kappa} \right\}  &\text{if } \beta \in (0,\infty)^2 \\
 \left(\frac{4d^2}{\kappa \pi^2} + \frac{d}{\beta_-}\right)^{-1} &\text{if } \beta_+ =0, \beta_- \in (0,\infty) \\
  \left(\frac{4d^2}{\kappa \pi^2} + \frac{d}{\beta_+}\right)^{-1} &\text{if } \beta_- =0, \beta_+ \in (0,\infty) \\ 
 \frac{\kappa \pi^2}{4d^2} & \text{if } \beta_+ =\infty, \beta_- \in [0,\infty) \\
 \frac{\kappa \pi^2}{4d^2} & \text{if } \beta_- =\infty, \beta_+ \in [0,\infty) \\
  \frac{\kappa \pi^2} {d^2} &\text{if } \beta_+ = \beta_- =\infty  \\
 0 & \text{if } \beta_+ = \beta_- =0,
\end{cases}
\end{equation}
and in fact this inequality is an equality when  $\beta \in \{0,\infty\}^2$.
\end{lemma}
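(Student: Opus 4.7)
The plan has two parts: existence of $\zeta_\beta$ via the direct method of the calculus of variations, and a case-by-case verification of the lower bounds. For existence, I will note that $\mathfrak{D}_\beta \ge 0$ on $H^1_\beta((-d,0))$, so $\mu_\beta \ge 0$ is well-defined, and any minimizing sequence $\{\zeta_n\} \subset \mathcal{C}$ satisfies a uniform $H^1$ bound, since $\kappa \|\zeta_n'\|_{L^2}^2 \le \mathfrak{D}_\beta[\zeta_n]$ and $\|\zeta_n\|_{L^2} = 1$. I will extract $\zeta_n \rightharpoonup \zeta_\beta$ weakly in $H^1$ and, via the 1D compact Sobolev embedding into $C^0([-d,0])$, uniformly. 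Uniform convergence preserves the $L^2$ normalization, passes to the limit in the finite-sum boundary terms of $\mathfrak{D}_\beta$, and preserves any Dirichlet constraints in the definition of $H^1_\beta$; combined with weak lower semicontinuity of $\|\cdot\|_{L^2}$ applied to $\zeta_n'$ this yields $\mathfrak{D}_\beta[\zeta_\beta] \le \liminf \mathfrak{D}_\beta[\zeta_n] = \mu_\beta$, so $\zeta_\beta$ attains the minimum.

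For the corner cases $\beta \in \{0,\infty\}^2$, I will solve the Sturm--Liouville eigenvalue problem $-\kappa \zeta'' = \mu \zeta$ with the boundary conditions dictated by $\beta$ and display the explicit eigenfunction realizing equality: the constant $1/\sqrt{d}$ for $\beta=(0,0)$ giving $\mu=0$; $\sin(\pi(x+d)/d)$ for $\beta=(\infty,\infty)$ giving $\mu = \kappa \pi^2/d^2$; and half-period sines for the mixed Dirichlet--Neumann cases giving $\mu=\kappa\pi^2/(4d^2)$. For the intermediate cases where one of $\beta_\pm$ is $\infty$ and the other lies in $[0,\infty)$, the Dirichlet trace annihilates the corresponding boundary penalty, and the mixed sharp Dirichlet--Neumann Poincar\'e inequality delivers $\mu_\beta \ge \kappa\pi^2/(4d^2)$. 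When $\beta_+=0$ and $\beta_- \in (0,\infty)$ (and symmetrically), I plan to decompose $\zeta = \zeta(-d) + g$ with $g(-d)=0$, apply the parametrized pointwise bound $\zeta(x)^2 \le (1+t)\zeta(-d)^2 + (1+t^{-1})g(x)^2$, integrate over $(-d,0)$, and invoke the sharp Dirichlet--Neumann Poincar\'e inequality $\|g\|_{L^2}^2 \le (4d^2/\pi^2)\|\zeta'\|_{L^2}^2$; the optimal choice $t = 4d\beta_-/(\kappa\pi^2)$ produces exactly the claimed constant $(4d^2/(\kappa\pi^2)+d/\beta_-)^{-1}$.

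The main obstacle is the case $\beta \in (0,\infty)^2$, whose two-branch bound $\min\{\kappa\pi^2/(4d^2),\, \beta_+\beta_-/\kappa\}$ reflects a competition between bulk diffusion and boundary transmission. My plan here is a case split on the sign pattern of the minimizer $\zeta$: after replacing $\zeta$ by $|\zeta|$, which preserves both $\mathfrak{D}_\beta$ and $\|\zeta\|_{L^2}$, either $\zeta$ vanishes at some $x_0 \in [-d,0]$---in which case splitting at $x_0$ and applying the mixed Dirichlet--Neumann sharp Poincar\'e inequality on each subinterval of length at most $d$ yields $\|\zeta\|_{L^2}^2 \le (4d^2/\pi^2)\|\zeta'\|_{L^2}^2$ and hence $\mu_\beta \ge \kappa \pi^2/(4d^2)$---or $\zeta$ is strictly positive throughout, and one must produce the complementary bound $\beta_+\beta_-/\kappa$. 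For this positive branch I would combine AM--GM on the Robin traces, $\beta_+\zeta(0)^2 + \beta_-\zeta(-d)^2 \ge 2\sqrt{\beta_+\beta_-}\,\zeta(0)\zeta(-d)$, with a fundamental-theorem-of-calculus estimate linking $\|\zeta\|_{L^2}^2$ to $\zeta(0)\zeta(-d)$ and $\kappa\|\zeta'\|_{L^2}^2$, then optimize a free parameter to arrive at $\|\zeta\|_{L^2}^2 \le (\kappa/(\beta_+\beta_-))\mathfrak{D}_\beta[\zeta]$. Identifying the sharp test-function inequality that marries the boundary and bulk terms into precisely the constant $\beta_+\beta_-/\kappa$ is the key technical hurdle of the proof.
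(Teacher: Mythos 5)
Your existence argument, corner-case eigenfunctions, and mixed Dirichlet cases are all fine and essentially match the paper in spirit; the Euler--Lagrange reduction to the one-dimensional Robin Sturm--Liouville problem and the explicit solutions when $\beta\in\{0,\infty\}^2$ are exactly how the paper closes those cases. Your treatment of $\beta_+=0$, $\beta_-\in(0,\infty)$ is a genuinely different and arguably nicer route: the paper passes to the transcendental equation $\tan(d\sqrt{\mu_\beta/\kappa})=\beta_-/\sqrt{\mu_\beta\kappa}$ and invokes the estimate $x\tan x\le x^2/(1-(2x/\pi)^2)$ for $x\in(0,\pi/2)$, whereas your decomposition $\zeta=\zeta(-d)+g$, Young with free parameter $t$, the sharp Dirichlet--Neumann Poincar\'e inequality, and the optimization $t=4d\beta_-/(\kappa\pi^2)$ produce the identical constant by purely elementary means without touching the eigenvalue equation.

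However, there is a genuine gap in your plan for $\beta\in(0,\infty)^2$. The inequality you hope to prove on the positive branch, namely
\begin{equation}
\norm{\zeta}_{L^2}^2 \le \frac{\kappa}{\beta_+\beta_-}\,\mathfrak{D}_\beta[\zeta] \quad\text{for } \zeta>0,
\end{equation}
is false. Take $\beta_+=\beta_-=\beta$ and the constant $\zeta=d^{-1/2}$, which is strictly positive and $L^2$-normalized; then $\mathfrak{D}_\beta[\zeta]=2\beta/d$, and $2\beta/d<\beta^2/\kappa$ as soon as $\beta>2\kappa/d$. So the positive branch of your dichotomy cannot deliver $\beta_+\beta_-/\kappa$ on its own; at best it could deliver $\min\{\kappa\pi^2/(4d^2),\ \beta_+\beta_-/\kappa\}$, but your outlined FTC-plus-AM--GM estimate does not obviously produce the two-branch minimum and you have not identified the closing inequality. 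The paper avoids the whole issue by working from the transcendental eigenvalue equation
\begin{equation}
\tan\!\left(d\sqrt{\mu_\beta/\kappa}\right)=\frac{\sqrt{\mu_\beta\kappa}\,(\beta_++\beta_-)}{\kappa\mu_\beta-\beta_+\beta_-},
\end{equation}
and observing that if $\mu_\beta<\kappa\pi^2/(4d^2)$ then $d\sqrt{\mu_\beta/\kappa}\in(0,\pi/2)$ forces the left side, hence the right side, to be positive, which forces $\kappa\mu_\beta>\beta_+\beta_-$. You should either import that short argument or redesign the dichotomy so the positive branch is only required to produce the minimum of the two quantities rather than the (possibly larger) transmission bound alone.
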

\begin{proof}

The existence of $\zeta_\beta$, a minimizer of $\mathfrak{D}_\beta$ over the constraint set $\mathcal{C}$, follows from a standard application of the direct methods in the calculus of variations, so we shall omit the details.  It's clear that $\mu_\beta := \mathfrak{D}_\beta[\zeta_\beta] \ge 0$.  The minimizer must then satisfy the equations
\begin{equation}\label{eigen_1}
 \begin{cases}
  -\kappa \zeta_\beta''(r) =  \mu \zeta_\beta(r) &\text{for } r\in (-d,0) \\
  \kappa \zeta_\beta'(0) + \beta_+ \zeta_\beta(0) =0 \\
  -\kappa \zeta_\beta'(-d) + \beta_- \zeta_\beta(-d) =0
 \end{cases}
\end{equation}
with the obvious reinterpretation when $\beta_\pm =\infty$.

When $\beta \in (0,\infty)^2$, the problem \eqref{eigen_1} corresponds to finding eigenfunctions of the Laplacian with Robin boundary conditions in one dimension.  Standard analysis (see for example, Chapter 4.3 of Strauss's book \cite{strauss}) shows that $\mu_\beta$ must be the smallest positive solution to 
\begin{equation}
 \tan\left(d \sqrt{\frac{\mu_\beta}{\kappa}} \right) = \frac{\sqrt{\mu_\beta \kappa}(\beta_+ + \beta_-)}{\kappa \mu_\beta - \beta_+ \beta_-}.
\end{equation}
Although this equation cannot be solved analytically in general, it is a simple matter to see that 
\begin{equation}
 \min\left\{\frac{\kappa \pi^2}{4d^2}, \frac{\beta_+ \beta_-}{\kappa} \right\}  \le \mu_\beta.
\end{equation}

When $\beta_+ =0$ and $\beta_- \in (0,\infty)$, the problem \eqref{eigen_1} requires that $\mu_\beta$ be the smallest positive solution to 
\begin{equation}
 \tan\left(d \sqrt{\frac{\mu_\beta}{\kappa}} \right) = \frac{\beta_-}{\sqrt{\mu_\beta \kappa}},
\end{equation}
which must lie in the interval $(0,\pi/2)$.  By employing the estimate 
\begin{equation}
 x \tan(x) \le \frac{x^2}{1-\left(\frac{2x}{\pi} \right)^2} \text{ for }x \in (0,\pi/2),
\end{equation}
we may show that $\mu_\beta$ satisfies the bound
\begin{equation}
\frac{1}{ \frac{4d^2}{\kappa \pi^2} + \frac{d}{\beta_-} } \le \mu_\beta.
\end{equation}
In the case $\beta_-=0$ and $\beta_+ \in (0,\infty)$ a similar analysis allows us to deduce that 
\begin{equation}
\frac{1}{ \frac{4d^2}{\kappa \pi^2} + \frac{d}{\beta_+} } \le \mu_\beta.
\end{equation}

The case $\beta_+ = \infty$ and $\beta_- \in [0,\infty)$  for problem \eqref{eigen_1} leads to the equation 
\begin{equation}
 \tan\left(d \sqrt{\frac{\mu_\beta}{\kappa}} \right) = -\frac{\sqrt{\mu_\beta \kappa}}{\beta_-}.
\end{equation}
Similarly, the case $\beta_- = \infty$ and $\beta_+ \in [0,\infty)$ leads to the equation 
\begin{equation}
 \tan\left(d \sqrt{\frac{\mu_\beta}{\kappa}} \right) = -\frac{\sqrt{\mu_\beta \kappa}}{\beta_+}.
\end{equation}
In either case we find that 
\begin{equation}
 \frac{\kappa \pi^2} {4d^2} \le \mu_\beta,
\end{equation}
with equality achieved if the finite $\beta$ term actually vanishes.  

The case $\beta_+ = \beta_- = \infty$  in \eqref{eigen_1} leads to the equation 
\begin{equation}\label{eigen_2}
 \sin\left(d \sqrt{\frac{\mu_\beta}{\kappa}} \right)=0.
\end{equation}
The smallest positive solution is then 
\begin{equation}
 \frac{\kappa \pi^2} {d^2} = \mu_\beta.
\end{equation}
The remaining cases  $\beta_+ = \beta_- = 0$ in \eqref{eigen_1} leads to the equation \eqref{eigen_2} as well, but the solution must only be non-negative, and hence $\mu_\beta =0$.

\end{proof}

As a consequence of the minimality of $\mu_\beta$ we deduce a corresponding one-dimensional coercivity estimate.

\begin{cor}\label{1d_coercive}
 Let $\mu_\beta \ge 0$ be as given in Lemma \ref{eigenvalue}.    If $\zeta \in  H^1_\beta((-d,0)) $, then 
\begin{equation}
 \mu_\beta \int_{-d}^0 \abs{\zeta}^2 \le \mathfrak{D}_\beta[\zeta].
\end{equation}
\end{cor}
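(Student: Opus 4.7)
The plan is to derive the coercivity estimate directly from the variational characterization of $\mu_\beta$ established in Lemma \ref{eigenvalue}, exploiting the quadratic homogeneity of $\mathfrak{D}_\beta$.

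First I would handle the trivial case $\zeta = 0$, in which the inequality reduces to $0 \le 0$. Now assume $\zeta \in H^1_\beta((-d,0))$ with $\zeta \neq 0$, and set $\tilde{\zeta} := \zeta/\norm{\zeta}_{L^2((-d,0))}$. Then $\tilde\zeta \in H^1_\beta((-d,0))$ since $H^1_\beta((-d,0))$ is a linear subspace, and $\norm{\tilde\zeta}_{L^2((-d,0))} = 1$, so $\tilde\zeta \in \mathcal{C}$. By the minimality property of $\mu_\beta$ from Lemma \ref{eigenvalue} we have
\begin{equation}
\mathfrak{D}_\beta[\tilde\zeta] \ge \mu_\beta.
\end{equation}

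Next I would invoke the quadratic homogeneity $\mathfrak{D}_\beta[\lambda \zeta] = \lambda^2 \mathfrak{D}_\beta[\zeta]$ for any $\lambda \in \R$, which is immediate from the definition of $\mathfrak{D}_\beta$ since each term is of the form $|\zeta'|^2$ or $|\zeta(a)|^2$. Applying this with $\lambda = \norm{\zeta}_{L^2((-d,0))}$ gives
\begin{equation}
\mathfrak{D}_\beta[\zeta] = \norm{\zeta}_{L^2((-d,0))}^2 \mathfrak{D}_\beta[\tilde\zeta] \ge \mu_\beta \norm{\zeta}_{L^2((-d,0))}^2 = \mu_\beta \int_{-d}^0 \abs{\zeta}^2,
\end{equation}
which is the desired inequality.

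There is no real obstacle here; the corollary is essentially a restatement of minimality combined with homogeneity. The only mild subtlety is to confirm that rescaling preserves membership in $H^1_\beta((-d,0))$, which is immediate since each of the defining boundary constraints ($\zeta(0) = 0$ and/or $\zeta(-d) = 0$) is linear, so that the space is a linear subspace of $H^1((-d,0))$ and closed under scalar multiplication. This covers all cases of $\beta \in [0,\infty]^2$ uniformly, including the degenerate case $\beta = (0,0)$ where $\mu_\beta = 0$ and the inequality becomes vacuous.
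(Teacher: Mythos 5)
Your proof is correct and takes essentially the same approach as the paper: both arguments rescale to the unit $L^2$ sphere and invoke the quadratic homogeneity of $\mathfrak{D}_\beta$ together with the minimality of $\mu_\beta$ from Lemma \ref{eigenvalue}. The only difference is cosmetic—you rescale $\zeta$ explicitly and handle $\zeta=0$ separately, while the paper phrases the same thing as equality of infima of the Rayleigh quotient.
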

\begin{proof}
The square of the $L^2$ norm and $\mathfrak{D}_\beta$ have the same homogeneity, and so Lemma \ref{eigenvalue} allows us to write
\begin{equation}
 \mu_\beta = \min_{\zeta \in \mathcal{C}} \mathfrak{D}_\beta[\zeta]  = \min_{\zeta \in H^1_\beta((-d,0))} \frac{\mathfrak{D}_\beta[\zeta]}{\norm{\zeta}_{L^2}^2}.
\end{equation}
Hence, for any $\zeta \in  H^1_\beta((-d,0)) $ we may estimate 
\begin{equation}
 \mu_\beta \le \frac{\mathfrak{D}_\beta[\zeta]}{\norm{\zeta}_{L^2}^2},
\end{equation}
which is the desired inequality.

\end{proof}

\subsection{Rigid boundary}

With the one-dimensional coercivity in hand, we can now derive the general coercivity estimate in the case of a rigid boundary.  We recall the definition of $H^1_\beta(\Omega)$ from \eqref{sobolev_beta} and $\mathcal{D}_\beta$ from \eqref{Dthe}.  We also define the vertical part of $\mathcal{D}_\beta$ according to
\begin{equation}\label{D_vert}
 \mathcal{V}_\beta[\varphi] = 
\begin{cases}
  \int_\Omega \kappa\abs{\p_3 \varphi}^2  + \beta_+ \int_{\Sigma_+} \abs{\varphi}^2 + \beta_- \int_{\Sigma_-} \abs{\varphi}^2 &\text{if } \beta \in [0,\infty)^2 \\
  \int_\Omega \kappa\abs{\p_3 \varphi}^2 + \beta_- \int_{\Sigma_-} \abs{\varphi}^2 &\text{if } \beta_+ = \infty, \beta_- \in [0,\infty) \\
   \int_\Omega \kappa\abs{\p_3 \varphi}^2 + \beta_+ \int_{\Sigma_+} \abs{\varphi}^2    &\text{if } \beta_- = \infty, \beta_+ \in [0,\infty)  \\
   \int_\Omega \kappa\abs{\p_3 \varphi}^2     &\text{if } \beta_+ = \beta_-= \infty.
\end{cases}
\end{equation}

\begin{Proposition}\label{rigid_coercive}
 Let $\mu_\beta \ge 0$ be as given in Lemma \ref{eigenvalue} and $\mathcal{V}_\beta$ be as defined in \eqref{D_vert}.   If $\varphi \in H^1_\beta(\Omega)$, then 
\begin{equation}\label{rc_00}
 \mu_\beta \int_\Omega \abs{\varphi}^2 \le \mathcal{V}_\beta[\varphi] \le \mathcal{D}_\beta[\varphi].
\end{equation}
Moreover, the inequality is sharp in the sense that 
\begin{equation}\label{rc_01}
 \mu_\beta = \inf\{ \mathcal{D}_\beta[\varphi] \;\vert\; \varphi \in H^1_\beta(\Omega) \text{ and } \norm{\varphi}_{L^2(\Omega)}=1\}.
\end{equation}
When $\Gamma = (L_1 \mathbb{T}) \times (L_2 \mathbb{T})$ this infimum is actually a minimum.

\end{Proposition}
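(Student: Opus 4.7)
The plan is to reduce the three-dimensional coercivity estimate to the one-dimensional estimate already established in Corollary \ref{1d_coercive} by means of Fubini's theorem in the vertical direction, then verify sharpness by plugging in extensions of the one-dimensional minimizer $\zeta_\beta$.

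The upper bound $\mathcal{V}_\beta[\varphi] \le \mathcal{D}_\beta[\varphi]$ is immediate from comparing the definitions \eqref{Dthe} and \eqref{D_vert}: in each of the four cases the boundary terms coincide, and $\abs{\p_3 \varphi}^2 \le \abs{\nab \varphi}^2$ pointwise in $\Omega$.

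For the lower bound, write $x = (x', x_3)$ with $x' \in \Gamma$. Standard Fubini/trace properties for $H^1$ on a product domain show that for $\varphi \in H^1_\beta(\Omega)$ and a.e.\ $x' \in \Gamma$, the slice $\zeta_{x'}(x_3) := \varphi(x', x_3)$ belongs to $H^1_\beta((-d,0))$, with the appropriate trace interpretation at $x_3 = 0, -d$ when $\beta_\pm = \infty$. Apply Corollary \ref{1d_coercive} to $\zeta_{x'}$ to obtain
\begin{equation*}
\mu_\beta \int_{-d}^0 \abs{\varphi(x',x_3)}^2 dx_3 \le \mathfrak{D}_\beta[\zeta_{x'}],
\end{equation*}
then integrate over $x' \in \Gamma$. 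Fubini together with the identification of the resulting boundary integrals with the $\Sigma_\pm$ boundary terms in \eqref{D_vert} yields $\mu_\beta \int_\Omega \abs{\varphi}^2 \le \mathcal{V}_\beta[\varphi]$, completing \eqref{rc_00}.

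For the sharpness claim \eqref{rc_01}, first note that \eqref{rc_00} already gives $\mu_\beta \le \inf\{\mathcal{D}_\beta[\varphi] : \varphi \in H^1_\beta(\Omega),\ \norm{\varphi}_{L^2(\Omega)} = 1\}$, so only the reverse needs attention. In the periodic case $\Gamma = (L_1 \mathbb{T}) \times (L_2 \mathbb{T})$, define $\varphi(x) = \abs{\Gamma}^{-1/2} \zeta_\beta(x_3)$, where $\zeta_\beta$ is the minimizer produced by Lemma \ref{eigenvalue}. Then $\nab \varphi$ has only a vertical component, so $\mathcal{D}_\beta[\varphi] = \mathcal{V}_\beta[\varphi] = \mathfrak{D}_\beta[\zeta_\beta] = \mu_\beta$, and $\norm{\varphi}_{L^2(\Omega)} = 1$, which realizes the infimum as a minimum. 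In the infinite case $\Gamma = \R^2$, choose a sequence of smooth horizontal cutoffs $\chi_R : \R^2 \to [0,1]$ with $\chi_R \equiv 1$ on $B_R$, $\chi_R \equiv 0$ off $B_{2R}$, and $\abs{\nab_\ast \chi_R} \le C/R$, and set $\varphi_R(x) = c_R \chi_R(x') \zeta_\beta(x_3)$ with $c_R$ chosen so that $\norm{\varphi_R}_{L^2(\Omega)} = 1$. A direct computation shows $\mathcal{D}_\beta[\varphi_R] = \mu_\beta + O(R^{-2})$, and the boundary conditions encoded in $H^1_\beta$ are inherited from $\zeta_\beta$, so $\varphi_R \in H^1_\beta(\Omega)$. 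Passing $R \to \infty$ gives the desired infimum.

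The only nontrivial point is justifying the Fubini argument when $\beta_\pm = \infty$, since one must ensure the boundary trace of $\varphi$ restricts correctly to the slice at a.e.\ $x'$; this follows from the standard description of $H^1(\Omega) = L^2(\Gamma; H^1((-d,0))) \cap H^1(\Gamma; L^2((-d,0)))$ and the compatibility of the $\Sigma_\pm$ trace with the slice traces of $\zeta_{x'}$.
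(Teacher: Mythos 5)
Your proof follows essentially the same route as the paper's: the Fubini reduction to the one-dimensional Corollary \ref{1d_coercive} for the lower bound, the identical explicit minimizer $\varphi = \abs{\Gamma}^{-1/2}\zeta_\beta(x_3)$ in the periodic case, and a slowly-varying horizontal test sequence in the $\R^2$ case (the paper uses a dilation $\alpha \psi(\alpha x')\zeta_\beta(x_3)$ of a fixed compactly supported profile rather than your cutoff $\chi_R$, but the effect — vanishing horizontal gradient contribution in the limit — is the same). The proposal is correct.
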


\begin{proof}
The inequality $\mathcal{V}_\beta[\varphi] \le \mathcal{D}_\beta[\varphi]$ is trivial, so it suffices to prove only the first inequality in \eqref{rc_00}.   By Fubini's theorem, we may write 
\begin{equation}
 \mathcal{V}_\beta[\varphi] = \int_{\Gamma} \mathfrak{D}_\beta[\varphi(x',\cdot)] dx'.
\end{equation}
For a.e. $x' \in \Gamma$ we know that $\varphi(x',\cdot) \in H^1_\beta((-d,0))$, and for such $x'$ we may use Corollary \ref{1d_coercive} to estimate 
\begin{equation}
 \mu_\beta \int_{-d}^0 \abs{\varphi(x',x_3)}^2 dx_3 \le  \mathfrak{D}_\beta[\varphi(x',\cdot)].
\end{equation}
Hence 
\begin{equation}
 \mu_\beta \int_\Omega \abs{\varphi}^2  = \mu_\beta \int_\Gamma \int_{-d}^0 \abs{\varphi(x',x_3)}^2 dx_3 dx' 
\le \int_\Gamma  \mathfrak{D}_\beta[\varphi(x',\cdot)]  dx'=  \mathcal{V}_\beta[\varphi], 
\end{equation}
which is \eqref{rc_00}.

It remains to prove \eqref{rc_01}.  We must break to cases depending on whether $\Gamma = (L_1 \mathbb{T}) \times (L_2 \mathbb{T})$ or $\Gamma = \mathbb{R}^2$.  Assume initially that $\Gamma = (L_1 \mathbb{T}) \times (L_2 \mathbb{T})$.  Set
\begin{equation}
 \varphi(x) = \frac{1}{\sqrt{L_1 L_2}} \zeta_\beta(x_3),
\end{equation}
where $\zeta_\beta$ is as constructed in Lemma \ref{eigenvalue}.  Then 
\begin{equation}
 \int_\Omega \abs{\varphi(x)}^2 dx = \left(\int_{\Gamma} \frac{1}{L_1 L_2} dx' \right) \left( \int_{-d}^0 \abs{\zeta_\beta(x_3)}^2 dx_3\right) = 1.
\end{equation}
Also, 
\begin{equation}
 \mathcal{D}_\beta[\varphi] = \mathcal{V}_\beta[\varphi] = \int_{\Gamma} \frac{1}{L_1 L_2}  \mathfrak{D}_\beta[\zeta_\beta] dx' =  \mathfrak{D}_\beta[\zeta_\beta] = \mu_\beta,
\end{equation}
which proves that \eqref{rc_01} holds and is a minimum when $\Gamma$ is periodic.

Now consider the case $\Gamma = \mathbb{R}^2$.  Choose $\psi \in C_c^\infty(\mathbb{R}^2)$ such that $\norm{\psi}_{L^2(\mathbb{R}^2)}=1$.  Let $\zeta_\beta$ again come from Lemma \ref{eigenvalue} and for $\alpha \in \R_+$ set
\begin{equation}
 \varphi_\alpha(x) = \alpha \psi(\alpha x') \zeta_\beta(x_3).  
\end{equation}
Then we compute 
\begin{equation}
 \int_\Omega \abs{\varphi_\alpha(x)}^2 dx = \left(\int_{\R^2} \alpha^2 \abs{\psi(\alpha x')}^2 dx' \right) \left( \int_{-d}^0 \abs{\zeta_\beta(x_3)}^2 dx_3\right) = 1
\end{equation}
and 
\begin{equation}
 \mathcal{D}_\beta[\varphi_\alpha] = \mathfrak{D}_\beta[\zeta_\beta] + \alpha^2 \int_{\R^2} \abs{\nab_\ast \varphi(x')}^2 dx' 
=  \mu_\beta  + \alpha^2 \int_{\R^2} \abs{\nab_\ast \varphi(x')}^2 dx'
\end{equation}
where $\nab_\ast$ denotes the horizontal gradient.  Then 
\begin{equation}
 \lim_{\alpha \to 0 }\mathcal{D}_\beta[\varphi_\alpha] = \mu_\beta,
\end{equation}
which proves \eqref{rc_01} when $\Gamma = \R^2$.

\end{proof}

We know from Lemma \ref{eigenvalue} that $\mu_\beta >0$ when $\beta \neq (0,0)$.  In this case the conclusion of the proposition is non-trivial.  When $\beta=(0,0)$ the result is actually trivial since $\mu_0 =0$.  Our next goal is to derive a non-trivial estimate when $\beta = (0,0)$, under a stronger assumption on the functions in question.  We will only be able to do so in the case of a periodic horizontal cross section.

\begin{Proposition}\label{rigid_coercive_zero}
Assume that $\Gamma = (L_1 \mathbb{T}) \times (L_2 \mathbb{T})$ and $\beta =(0,0)$.   If $\varphi \in  H_\beta^1(\Omega)$ and $\int_\Omega \varphi =0$, then
\begin{equation}
 \bar{\mu}_0 \int_\Omega \abs{\varphi}^2 \le  \mathcal{D}_\beta[\varphi],
\end{equation}
where
\begin{equation}
  \bar{\mu}_0 = \kappa \pi^2 \min\left\{\frac{1}{d^2}, \frac{4}{L_1^2},\frac{4}{L_2^2} \right\}.
\end{equation}
Moreover, the inequality is sharp in the sense that 
\begin{equation} 
 \bar{\mu}_0 = \inf\{ \mathcal{D}_\beta[\varphi] \;\vert\; \varphi \in H^1_\beta(\Omega),  \norm{\varphi}_{L^2(\Omega)}=1, \text{and } \int_\Omega \varphi =0\}.
\end{equation}

\end{Proposition}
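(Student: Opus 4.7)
The plan is to diagonalize the dissipation $\mathcal{D}_{(0,0)}$ on $H^1(\Omega)$ by separation of variables, exploiting that $\beta = (0,0)$ is pure Neumann at $\Sigma_\pm$ combined with full periodicity in $\Gamma$. The natural orthonormal basis of $L^2(\Omega)$ is the spectral basis of $-\Delta$ on $\Omega$ with these boundary conditions, namely the products $e_{k'}(x') f_n(x_3)$, where $e_{k'}(x') = (L_1 L_2)^{-1/2} e^{i k'\cdot x'}$ for $k' \in K' := (2\pi/L_1)\mathbb{Z}\times (2\pi/L_2)\mathbb{Z}$, and where $f_0 \equiv d^{-1/2}$ together with $f_n(x_3) = \sqrt{2/d}\cos(n\pi(x_3+d)/d)$ for $n \ge 1$ are the $L^2((-d,0))$-orthonormal Neumann eigenfunctions of $-\partial_3^2$ with eigenvalues $\lambda_n = (n\pi/d)^2$.

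The first step is to expand $\varphi = \sum_{k',n} \hat{\varphi}_{k',n}\, e_{k'} f_n$ and observe, by pairwise orthogonality in both $L^2(\Omega)$ and $H^1(\Omega)$, that $\|\varphi\|_{L^2(\Omega)}^2 = \sum_{k',n} |\hat{\varphi}_{k',n}|^2$ and $\mathcal{D}_{(0,0)}[\varphi] = \kappa\int_\Omega|\nabla\varphi|^2 = \kappa \sum_{k',n} (|k'|^2 + \lambda_n)|\hat{\varphi}_{k',n}|^2$. The constraint $\int_\Omega \varphi = 0$ translates exactly to the vanishing of the zero mode $\hat{\varphi}_{0,0} = 0$. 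I would then bound the Rayleigh quotient from below by $\kappa \min_{(k',n)\ne(0,0)}(|k'|^2 + \lambda_n)$. Computing this minimum is elementary: it is attained either at $n = 0$ with $k'\in\{(2\pi/L_1, 0),(0, 2\pi/L_2)\}$, giving $4\pi^2\min(1/L_1^2, 1/L_2^2)$, or at $k' = 0$, $n = 1$, giving $\pi^2/d^2$; any other nonzero mode is strictly larger. Taking the smaller of these values produces precisely $\bar{\mu}_0/\kappa$.

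For the sharpness claim, the extremal function is the minimizing mode itself: according to which of $1/d^2$, $4/L_1^2$, $4/L_2^2$ realizes the minimum, I would take $\varphi = f_1(x_3)/\sqrt{L_1 L_2}$, or $\varphi = f_0(x_3)\, e_{(2\pi/L_1,0)}(x')$, or $\varphi = f_0(x_3)\, e_{(0,2\pi/L_2)}(x')$. Each such $\varphi$ lies in $H^1(\Omega)$, has unit $L^2$ norm, integrates to zero on $\Omega$, and saturates $\mathcal{D}_{(0,0)}[\varphi] = \bar{\mu}_0$, so the infimum is in fact attained. The only mildly nontrivial ingredient is the completeness of $\{e_{k'} f_n\}$ in $H^1(\Omega)$ together with the pairwise $H^1$-orthogonality used in the diagonalization, but both follow from the standard spectral theorem for $-\Delta$ on $\Omega$ with mixed Neumann/periodic boundary conditions, so I do not anticipate any real obstacle.
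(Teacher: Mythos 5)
Your proof is correct and takes essentially the same spectral approach as the paper: both identify $\bar{\mu}_0$ as the first nonzero eigenvalue of $-\kappa\Delta$ on $\Omega$ with Neumann conditions on $\Sigma_\pm$ and periodic conditions on $\Gamma$, and the eigenvalue formula $\kappa[\pi^2 m^2/d^2 + 4\pi^2(n_1^2/L_1^2 + n_2^2/L_2^2)]$ is the same in both. The only difference is presentational: you expand directly in the explicit cosine/Fourier eigenbasis, so the Rayleigh-quotient lower bound and the attainment of the infimum are immediate from Parseval, whereas the paper first invokes the direct method of the calculus of variations to produce a minimizer, writes down the Euler--Lagrange problem, and then computes the same spectrum by separation of variables.
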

\begin{proof}
We define the constraint set 
\begin{equation}
 \mathcal{C} = \{ \psi \in H^1(\Omega) \;\vert\; \norm{\psi}_{L^2(\Omega)}=1 \text{ and } \int_\Omega\psi =0\}. 
\end{equation}
The direct method in the calculus of variations allows us to produce a minimizer of $\mathcal{D}_\beta$ over $\mathcal{C}$.  That is, we can find $\psi_0 \in \mathcal{C}$ such that 
\begin{equation}
 \mathcal{D}_\beta[\psi_0] = \min_{\psi \in \mathcal{C}} \mathcal{D}_\beta[\psi] =:\lambda.
\end{equation}
Standard arguments reveal that $\psi_0$ must satisfy
\begin{equation}
 \begin{cases}
-\kappa \Delta \psi_0 = \lambda \psi_0 &\text{in }\Omega \\
 \p_3 \psi_0 = 0 &\text{on } \Sigma_\pm.
 \end{cases}
\end{equation}
The spectrum of this problem can be computed explicitly by using a separation of variables:
\begin{equation}
 \lambda(n_1,n_2,m) = \kappa \left[ \frac{\pi^2 m^2}{d^2} + 4 \pi^2 \left( \frac{n_1^2}{L_1^2} + \frac{n_2^2}{L_2^2}\right)  \right]
\end{equation}
for $n_1,n_2,m \in \mathbb{N}$. However, the condition $\int_\Omega \psi_0$ eliminates the zero eigenvalue as a possibility, and so $\lambda$ must be the second smallest eigenvalue, which is easily computed: 
\begin{equation}
 \lambda = \kappa \pi^2 \min\left\{\frac{1}{d^2}, \frac{4}{L_1^2},\frac{4}{L_2^2} \right\}.
\end{equation}

\end{proof}

We might try something similar when $\Gamma = \R^2$.  The problems then are two-fold.  First we don't know for sure that $\varphi \in L^1(\Omega)$, and so the condition $\int_\Omega \varphi =0$ need not make sense.  Second, and more fundamental, is that when $\beta=(0,0)$
\begin{equation}
 \inf\{ \mathcal{D}_\beta[\varphi] \;\vert\; \varphi \in C_c^\infty(\R^2 \times [-d,0]),  \norm{\varphi}_{L^2(\Omega)}=1, \text{and } \int_\Omega \varphi =0\}=0.
\end{equation}
Indeed we may use $\varphi_\alpha$ from  the proof of Proposition \ref{rigid_coercive} with the extra assumption that $\int_{\R^2} \psi =0$.  Then $\varphi_\alpha \in C_c^\infty(\R^2 \times [-d,0])$ and
\begin{equation}
 \int_\Omega \abs{\varphi_\alpha}^2 =1, \int_\Omega \varphi_\alpha =0, \text{ and } \lim_{\alpha \to 0} \mathcal{D}_\beta[\varphi_\alpha] = \mu_\beta =0.
\end{equation}
The upshot of this analysis is that the dissipation functional $\mathcal{D}_\beta$ is simply not $L^2$-coercive when $\Gamma = \R^2$ and $\beta =(0,0)$.

\subsection{Moving boundary }

Now we derive a coercivity estimate in the case of a moving boundary.

\begin{Proposition}\label{geo_coercive}
Let $J$ and $\a$ be as in \eqref{A_def} and \eqref{ABJ_def} and suppose that 
\begin{equation}
1/c_0\leq J\leq c_0 \text{ for some } c_0>1. 
\end{equation}
Let $\mathcal{M}^t_\beta$ be given by \eqref{Dthe2}.  If $\varphi \in H^1_\beta(\Omega)$, then  
\begin{equation}\label{core2}
 \frac{\mu_\beta}{c_0^2} \int_{\Omega} \abs{\varphi}^2 J dx \leq  \mathcal{M}^t_\beta[\varphi].
\end{equation}
\end{Proposition}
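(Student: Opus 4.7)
The plan is to reduce the multidimensional coercivity to the one--dimensional estimate of Corollary \ref{1d_coercive} via a Fubini argument, and then to absorb the geometric quantities $J$, $K$, and $\abs{\n}$ using the pointwise bounds provided by the hypothesis.

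First I would fix $(x_1,x_2)\in\Gamma$ and apply Corollary \ref{1d_coercive} to the one--dimensional slice $\varphi(x_1,x_2,\cdot)\in H^1_\beta((-d,0))$; integrating in $(x_1,x_2)$ by Fubini's theorem yields, in the model case $\beta\in[0,\infty)^2$,
\begin{equation*}
\mu_\beta \int_\Omega \abs{\varphi}^2 \, dx \le \kappa \int_\Omega \abs{\p_3\varphi}^2\,dx + \beta_+ \int_{\Sigma_+}\abs{\varphi}^2 + \beta_- \int_{\Sigma_-}\abs{\varphi}^2.
\end{equation*}
The remaining cases, in which one or both of $\beta_\pm$ equals $\infty$, are structurally identical, with the corresponding boundary integral dropped and the Dirichlet condition enforced through $H^1_\beta(\Omega)$.

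The central algebraic step is the identity $\p_3\varphi = J\,(\naba\varphi)_3$, which I would obtain by inverting the explicit matrix $\a$ from \eqref{A_def} and reading off the last row of $\a^{-1}$. This yields the pointwise bound $\abs{\p_3\varphi}^2 \le J^2 \abs{\naba\varphi}^2$, and then the hypothesis $J\le c_0$ gives
\begin{equation*}
\int_\Omega \kappa\abs{\p_3\varphi}^2\,dx \le c_0 \int_\Omega \kappa J\abs{\naba\varphi}^2\,dx.
\end{equation*}
For the boundary terms, the trivial bound $\abs{\n}^2 = 1 + \abs{\p_1\eta}^2+\abs{\p_2\eta}^2 \ge 1$ gives $\int_{\Sigma_+}\abs{\varphi}^2 \le \int_{\Sigma_+}\abs{\varphi}^2\abs{\n}$, while $K=1/J\ge 1/c_0$ gives $\int_{\Sigma_-}\abs{\varphi}^2 \le c_0 \int_{\Sigma_-}\abs{\varphi}^2 K$. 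Inserting these into the previous display produces $\mu_\beta \int_\Omega\abs{\varphi}^2\,dx \le c_0\,\mathcal{M}^t_\beta[\varphi]$.

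Finally, one more application of $J\le c_0$ gives $\int_\Omega\abs{\varphi}^2 J\,dx \le c_0 \int_\Omega \abs{\varphi}^2\,dx$, which combined with the previous line yields
\begin{equation*}
\frac{\mu_\beta}{c_0^2}\int_\Omega \abs{\varphi}^2 J\,dx \le \mathcal{M}^t_\beta[\varphi],
\end{equation*}
as desired. The main obstacle I anticipate is purely organizational: the identity $\p_3\varphi = J\,(\naba\varphi)_3$ must be verified cleanly, and the case analysis across the four possible structures of $H^1_\beta(\Omega)$ needs to be carried out in a uniform way. Once these are in place, the proof is a straightforward chain of substitutions using $1/c_0 \le J,K \le c_0$ and $\abs{\n}\ge 1$.
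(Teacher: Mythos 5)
Your proof is correct and follows essentially the same route as the paper's: you reduce to the one-dimensional coercivity of Corollary \ref{1d_coercive} via Fubini (which is precisely the content of Proposition \ref{rigid_coercive}, which the paper simply cites), use the identity $(\naba\varphi)_3 = K\,\p_3\varphi$ (equivalently $\p_3\varphi = J(\naba\varphi)_3$) to compare $\abs{\p_3\varphi}^2$ with $J\abs{\naba\varphi}^2$, and then absorb the factors of $J$, $K$, and $\abs{\n}$ using the bounds $1/c_0 \le J \le c_0$ and $\abs{\n}\ge 1$. The only cosmetic difference is that you re-derive the geometric identity and the Fubini-type coercivity inline rather than citing Lemma \ref{geometric_ids} and Proposition \ref{rigid_coercive}.
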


\begin{proof} 
Lemma \ref{geometric_ids} tells us that $\naba \varphi \cdot e_3 = K \p_3 \varphi$, and so the equality $K= J^{-1}$ allows us to estimate
\begin{equation}
 \int_\Omega J \abs{\naba \varphi}^2 \ge \int_\Omega J \abs{K \p_3 \varphi}^2 = \int_\Omega K \abs{\p_3 \varphi}^2 \ge \frac{1}{c_0} \int_\Omega \abs{\p_3 \varphi}^2.
\end{equation}
Similarly, 
\begin{equation}
 \int_{\Sigma_-} \beta_- \abs{\varphi}^2 K \ge \frac{1}{c_0}  \int_{\Sigma_-} \beta_- \abs{\varphi}^2.
\end{equation}
Since $c_0 >1$ and  $\abs{\n} = \sqrt{1 + \abs{\p_1 \eta}^2 + \abs{\p_2 \eta}^2} \ge 1$ on $\Sigma_+$, we may estimate 
\begin{equation}
\int_{\Sigma_+} \beta_+ \abs{\varphi}^2 \abs{\n} \ge \frac{1}{c_0} \int_{\Sigma_+} \beta_+ \abs{\varphi}^2.
\end{equation}
Combining these three inequalities yields the estimate 
\begin{equation}\label{gc_1}
\frac{1}{c_0} \mathcal{V}_\beta[\varphi]  \le  \mathcal{M}^t_\beta[\varphi],
\end{equation}
where $\mathcal{V}_\beta$ is defined by \eqref{D_vert}.  The desired estimate then follows \eqref{gc_1} and from Proposition \ref{rigid_coercive}.
  
\end{proof}

Again this estimate is only useful when $\beta \neq (0,0)$.  When $\beta =(0,0)$ we must use Proposition \ref{rigid_coercive_zero} as the basis of our estimate.

\begin{Proposition}\label{geo_coercive_zero}
Assume that $\beta =(0,0)$ and that $\Gamma = (L_1 \mathbb{T}) \times (L_2 \mathbb{T})$.  Let $J$ and $\a$ be as in \eqref{A_def} and \eqref{ABJ_def} and suppose that 
\begin{equation}\label{geo_c_00}
1/c_0\leq J\leq c_0  \text{ for some } c_0>1 
\end{equation}
and
\begin{equation}\label{geo_c_01}
\frac{1}{c_1} I \le  J \a^T \a \le c_1 I \text{ for some } c_1 >0.
\end{equation}
Let $\mathcal{M}^t_\beta$ be given by \eqref{Dthe2}.  If $\varphi \in H^1_\beta(\Omega)$ satisfies $\int_\Omega J \psi =0$, then  
\begin{equation}\label{geo_c_02}
 \frac{\bar{\mu}_0}{c_0 c_1} \int_{\Omega} \abs{\varphi}^2 J dx \leq  \mathcal{M}^t_\beta[\varphi],
\end{equation}
where $\bar{\mu}_0$ is as defined in Proposition \ref{rigid_coercive_zero}.
\end{Proposition}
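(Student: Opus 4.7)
The plan is to reduce the $J$-weighted Poincaré-type inequality \eqref{geo_c_02} to the flat Poincaré inequality of Proposition \ref{rigid_coercive_zero}, paying the price $c_0 c_1$ for the two weighted/unweighted conversions. The rough chain of inequalities to realize is
\begin{equation*}
\int_\Omega J \abs{\varphi}^2 \;\lesssim_{c_0}\; \int_\Omega \abs{\varphi - c}^2 \;\lesssim_{\bar{\mu}_0^{-1}}\; \int_\Omega \kappa \abs{\nab \varphi}^2 \;\lesssim_{c_1}\; \int_\Omega \kappa J \abs{\naba \varphi}^2 = \mathcal{M}_\beta^t[\varphi],
\end{equation*}
where $c$ is a carefully chosen constant.

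First I would reduce to the case handled by Proposition \ref{rigid_coercive_zero}. Set $c = \frac{1}{\abs{\Omega}}\int_\Omega \varphi$ so that $\int_\Omega (\varphi - c)\,dx = 0$. Since $\beta=(0,0)$ we have $H^1_\beta(\Omega) = H^1(\Omega)$, and $\varphi - c \in H^1(\Omega)$ with $\nab(\varphi-c) = \nab \varphi$. Applying Proposition \ref{rigid_coercive_zero} to $\varphi - c$ yields
\begin{equation*}
\bar{\mu}_0 \int_\Omega \abs{\varphi - c}^2 \,dx \le \kappa \int_\Omega \abs{\nab \varphi}^2 \,dx.
\end{equation*}

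Next I would convert the right side into the $(J,\mathcal{A})$-weighted dissipation using the ellipticity assumption \eqref{geo_c_01}. Writing $\abs{\naba \varphi}^2 = (\mathcal{A}^T\mathcal{A})_{jk} \p_j \varphi \, \p_k \varphi$, the bound $J\mathcal{A}^T\mathcal{A} \ge c_1^{-1} I$ gives $\kappa \abs{\nab \varphi}^2 \le c_1 \kappa J \abs{\naba \varphi}^2$ pointwise, and integrating produces $\kappa \int_\Omega \abs{\nab \varphi}^2 \le c_1 \mathcal{M}^t_\beta[\varphi]$.

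The step I expect to be the main (though still short) obstacle is reconciling the unweighted zero-average condition $\int_\Omega (\varphi-c) = 0$ with the weighted condition $\int_\Omega J \varphi = 0$ in the hypothesis. The computation to do this cleanly is
\begin{equation*}
\int_\Omega J \abs{\varphi}^2 = \int_\Omega J \abs{\varphi - c}^2 + 2 c \int_\Omega J (\varphi - c) + c^2 \int_\Omega J,
\end{equation*}
and using $\int_\Omega J\varphi = 0$ yields $\int_\Omega J(\varphi - c) = -c \int_\Omega J$, so the cross term cancels one copy of the last term and we obtain
\begin{equation*}
\int_\Omega J \abs{\varphi}^2 = \int_\Omega J \abs{\varphi - c}^2 - c^2 \int_\Omega J \le \int_\Omega J \abs{\varphi - c}^2 \le c_0 \int_\Omega \abs{\varphi - c}^2,
\end{equation*}
using $J \le c_0$ in the last step. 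Chaining this with the previous two bounds gives
\begin{equation*}
\int_\Omega J \abs{\varphi}^2 \,dx \le \frac{c_0}{\bar{\mu}_0} \kappa \int_\Omega \abs{\nab \varphi}^2 \,dx \le \frac{c_0 c_1}{\bar{\mu}_0} \mathcal{M}_\beta^t[\varphi],
\end{equation*}
which rearranges to the claimed estimate \eqref{geo_c_02}.
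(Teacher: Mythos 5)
Your proof is correct, and it takes a genuinely different and more elementary route than the paper's. The paper endows $H^1(\Omega)$ and $L^2(\Omega)$ with $J$-weighted inner products, constructs the second eigenfunction $\psi_1$ of the weighted Neumann Laplacian by the direct method, proves a Courant--Fischer-style maximin characterization of the corresponding eigenvalue $\lambda_1(t)$, identifies the constraint $\int_\Omega J\varphi = 0$ with membership in $X_t(K)$, and only then chains the ellipticity bounds to compare $\lambda_1(t)$ with $\bar\mu_0$. You sidestep all of that machinery with the single observation that if $c$ is the \emph{unweighted} mean of $\varphi$, then the weighted zero-mean hypothesis forces
\begin{equation*}
\int_\Omega J\abs{\varphi}^2 = \int_\Omega J\abs{\varphi - c}^2 - c^2 \int_\Omega J \le \int_\Omega J\abs{\varphi - c}^2,
\end{equation*}
i.e.\ the cross term has the favorable sign (equivalently: the $J$-weighted projection of $\varphi$ onto constants is zero, so shifting $\varphi$ by any constant can only increase $\int_\Omega J|\cdot|^2$). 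After that you need only the elementary bounds $J \le c_0$ and $J\a^T\a \ge c_1^{-1}I$, together with the rigid result (Proposition \ref{rigid_coercive_zero}) applied to $\varphi - c$. Both routes give the same constant $\bar\mu_0/(c_0 c_1)$; yours is shorter, entirely computational, and makes transparent exactly where each factor of $c_0$ and $c_1$ enters, whereas the paper's maximin argument is heavier but would generalize more readily to situations where the weighted and unweighted constraints could not be reconciled by a sign observation alone.
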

\begin{proof}

Due to the inequalities \eqref{geo_c_00} and \eqref{geo_c_01}, we may endow the space $H^1(\Omega)$ with the inner-product 
\begin{equation}
 \br{\psi,\varphi}_{1,t} = \int_\Omega J(t,\cdot)\psi \varphi  + J(t,\cdot) \nab_{\a(t,\cdot)} \psi \cdot \nab_{\a(t,\cdot)} \varphi, 
\end{equation}
and the resulting norm is equivalent to the standard $H^1$ norm. Similarly, $L^2(\Omega)$ may be endowed with the inner-product
\begin{equation}
 \br{\psi,\varphi}_{0,t} = \int_\Omega J(t,\cdot)\psi \varphi.
\end{equation}
It's clear that 
\begin{equation}
 \min\left\{ \left. \frac{\mathcal{M}_\beta^t[\psi]}{\ns{\psi}_{0,t}} \;\right\vert\; \psi \in H^1(\Omega) \backslash\{0\}\right\} =0
\end{equation}
and that the minimizer is constant function $\varphi_0$.  We may use the direct methods in the calculus of variations to find $\varphi_1 \in H^1(\Omega)\backslash \{0\}$ such that
\begin{equation}
\frac{\mathcal{M}_\beta^t[\psi_1]}{\ns{\psi_1}_{0,t}} =
 \min\left\{ \left. \frac{\mathcal{M}_\beta^t[\psi]}{\ns{\psi}_{0,t}} \;\right\vert\; \psi \in H^1(\Omega) \backslash\{0\} \text{ and } \br{\psi,\varphi_0}_{0,t}=0 \right\} =:\lambda_1(t).
\end{equation}

For any $w \in H^1(\Omega) \backslash \{0\}$ we will  write 
\begin{equation}
 X_t(w) = \{ \psi \in H^1(\Omega) \;\vert\; \br{\psi,w}_{0,t}=0\}
\end{equation}
for the co-dimension one subspace that is perpendicular to $w$ with respect to the inner-product $\br{\cdot,\cdot}_{0,t}$.  We claim that $\psi_1$ satisfies the maximin principle 
\begin{equation}\label{geo_c_1}
\max_{w \in H^1(\Omega) \backslash \{0\}} \min_{ \psi \in X_t(w) \backslash\{0\} } \frac{\mathcal{M}_\beta^t[\psi]}{\ns{\psi}_{0,t}} = \frac{\mathcal{M}_\beta^t[\psi_1]}{\ns{\psi_1}_{0,t}}.
\end{equation}
To prove the claim we first set $V = \text{span}\{\varphi_0,\varphi_1\}$.  The definitions of $\varphi_0,\varphi_1$ easily imply that
\begin{equation}
 \max_{\psi \in V \backslash \{0\}} \frac{\mathcal{M}_\beta^t[\psi]}{\ns{\psi}_{0,t}} = \lambda_1(t).
\end{equation}
Let $w \in H^1(\Omega) \backslash \{0\}$ be arbitrary.   Since $V$ is of dimension $2$ it must hold that $V \cap X_t(w) \neq \{0\}$.  Choosing $v \in V \cap X_t(w) \backslash \{0\}$, we then find that 
\begin{equation}
 \frac{\mathcal{M}_\beta^t[v]}{\ns{v}_{0,t}} \le  \max_{\psi \in V \backslash \{0\}} \frac{\mathcal{M}_\beta^t[\psi]}{\ns{\psi}_{0,t}} = \lambda_1(t),
\end{equation}
and hence that
\begin{equation}
 \min_{ \psi \in X_t(w) \backslash\{0\} } \frac{\mathcal{M}_\beta^t[\psi]}{\ns{\psi}_{0,t}} \le \lambda_1(t).
\end{equation}
From this and the definition of $\lambda_1(t)$ as a minimizer over $X_t(\varphi_0)$, we deduce that \eqref{geo_c_1} holds, proving the claim.

Now we return to the proof of Proposition \ref{rigid_coercive_zero}; we may use the matching homogeneities of $\mathcal{D}_\beta[\cdot]$ and $\norm{\cdot}_{L^2(\Omega)}^2$ to rewrite
\begin{equation}
 \bar{\mu}_0 =  \min\left\{ \left. \frac{\mathcal{D}_\beta[\psi]}{\ns{\psi}_{L^2(\Omega)}} \;\right\vert\; \psi \in H^1(\Omega) \backslash\{0\} \text{ and } \int_\Omega \psi =0 \right\}.
\end{equation}
Since $JK=1$ we have that 
\begin{equation}
 \int_\Omega \psi =0 \Leftrightarrow \int_\Omega \psi K J=0 \Leftrightarrow \br{\psi, K}_{0,t}=0  \Leftrightarrow \psi \in X_t(K).
\end{equation}
Hence
\begin{equation}\label{geo_c_2}
  \bar{\mu}_0  = \min_{\psi \in X_t(K) \backslash \{0\}} \frac{\mathcal{D}_\beta[\psi]}{\ns{\psi}_{L^2(\Omega)}}.
\end{equation}

The inequalities  \eqref{geo_c_00} and \eqref{geo_c_01} allow us to estimate
\begin{equation}
\frac{1}{c_1} \int_\Omega \abs{\nab \psi}^2 \le \int_\Omega J \a^T \a \nab \psi \cdot \nab\psi =   \int_\Omega J \abs{\naba \psi}^2 = \mathcal{M}_\beta^t[\psi] 
\end{equation}
and 
\begin{equation}
 \frac{1}{c_0}    \int_\Omega J \abs{\psi}^2 \le \int_\Omega \abs{\psi}^2 
\end{equation}
for every $\psi \in H^1(\Omega)$.  Consequently, 
\begin{equation}\label{geo_c_3}
\frac{1}{c_0 c_1} \frac{\mathcal{D}_\beta[\psi]}{\ns{\psi}_{L^2(\Omega)} }\le  \frac{\mathcal{M}_\beta^t[\psi]}{\ns{\psi}_{0,t}} 
\end{equation}
for $\psi \in H^1(\Omega) \backslash \{0\}.$  Combining \eqref{geo_c_1}, \eqref{geo_c_2}, and \eqref{geo_c_3} then yields the bound
\begin{equation}
\begin{split}
 \frac{\bar{\mu}_0}{c_0 c_1}   &= \min_{\psi \in X_t(K) \backslash \{0\}} \frac{1}{c_0c_1}\frac{\mathcal{D}_\beta[\psi]}{\ns{\psi}_{L^2(\Omega)}} \\
&\le \min_{\psi \in X_t(K) \backslash \{0\}} \frac{\mathcal{M}_\beta^t[\psi]}{\ns{\psi}_{0,t}} \\
&\le \max_{w \in H^1(\Omega) \backslash \{0\}} \min_{ \psi \in X_t(w) \backslash\{0\} } \frac{\mathcal{M}_\beta^t[\psi]}{\ns{\psi}_{0,t}} \\
& = \frac{\mathcal{M}_\beta^t[\psi_1]}{\ns{\psi_1}_{0,t}} \\
& =  \min\left\{ \left. \frac{\mathcal{M}_\beta^t[\psi]}{\ns{\psi}_{0,t}} \;\right\vert\; \psi \in H^1(\Omega) \backslash\{0\} \text{ and } \br{\psi,\varphi_0}_{0,t}=0 \right\}.
 \end{split}
\end{equation}
Since $\psi_0$ is constant, this implies \eqref{geo_c_02} when $\psi \neq 0$.  The case $\psi=0$ is trivial.

\end{proof}

\section{Proof of Main results}

We start with the proof of Theorem \ref{thm-ri}. 

\begin{proof}[Proof of Theorem \ref{thm-ri}]
We will only prove the  estimates; with them in hand the existence of global solutions then follows from Proposition \ref{rigid_lwp} and a standard continuation argument.  The global solutions must then satisfy the decay estimates as well.  The argument for $(iii)$ is somewhat simpler, so we begin with it.

From Lemma \ref{lem1} and Proposition \ref{rigid_coercive}, we first obtain 
\begin{equation}\label{tri_1}
\begin{split}
\frac12\frac{d}{dt} \int_{\Omega} |\theta-\theta^\beta_{eq}|^2 dx  +\mu_\beta \int_{\Omega} |\theta-\theta^\beta_{eq}|^2 dx & \leq -  \p_3 \theta_{eq}^\beta  \int_{\Omega} (\theta-\theta^\beta_{eq})\,  u_3 dx \\
& \leq \abs{\p_3 \theta_{eq}^\beta}   \left( \int_{\Omega} |\theta-\theta^\beta_{eq}|^2 dx\right)^\frac12
 \left( \int_\Omega |u|^2 dx \right)^\frac12. 
\end{split}
\end{equation}
Let $f(t):= \| \theta(t,\cdot)-\theta^\beta_{eq} \|_{L^2(\Omega)}$.  Since $\norm{u(t)}_{L^2(\Omega)} \le  g(t)$,   we deduce from \eqref{tri_1} that $f$ satisfies
\begin{equation}
\frac{df}{dt} + \mu_\beta f \leq \abs{\p_3 \theta_{eq}^\beta}  g(t)
\end{equation}
which leads to 
\begin{equation}
\frac{d}{dt} \left( e^{ \mu_\beta t} f \right) \leq  \abs{\p_3 \theta_{eq}^\beta}  e^{ \mu_\beta t} g(t). 
\end{equation}
By integrating in $t$, we obtain \eqref{decay1}.

Next we prove $(i)$, in which case $\beta =(0,0)$ and $\Gamma = (L_1 \mathbb{T}) \times (L_2 \mathbb{T})$.  We use Lemma \ref{lem1_zero} with $C = \frac{1}{\abs{\Omega}} \int_\Omega \theta_0$ to find that 
\begin{equation}\label{tri_2}
\hal\frac{d}{dt} \int_{\Omega} |\theta-\theta^\beta_{eq} -C |^2 dx  + \mathcal{D}_\beta[\theta - \theta_{eq}^\beta -C] = 0.
\end{equation}
We then know from \eqref{l1z_02} of Lemma \ref{lem1_zero}   that 
\begin{equation}
 \int_\Omega (\theta(t,\cdot)-\theta^\beta_{eq} -C)dx =0
\end{equation}
for all $t$.  Proposition \ref{rigid_coercive_zero} then allows us to deduce from \eqref{tri_2} that
\begin{equation}
 \hal\frac{d}{dt} \int_{\Omega} |\theta-\theta^\beta_{eq} -C |^2 dx  + \bar{\mu}_0 \int_{\Omega} |\theta-\theta^\beta_{eq} -C |^2 dx \le 0.
\end{equation}
The estimate \eqref{decay00} then follows from this inequality as above.

Finally, to prove $(ii)$ we simply integrate the identity \eqref{ene1_n}.  

\end{proof}

We next present the proof of Theorem \ref{thm-mo}. 

\begin{proof}[Proof of Theorem \ref{thm-mo}]

As in the proof of Theorem \ref{thm-mo}, we will only prove the  estimates.  The global existence claim follows from these and a continuation argument.  

We start with $(i)$. When $\beta=(0,0)$, we have the energy identity \eqref{gez_01} and the zero-average condition \eqref{gez_03} from Lemma \ref{geo_energy_zero}.  Together with the coercivity estimate \eqref{geo_c_01} of Proposition \ref{geo_coercive_zero}, this implies that 
\begin{equation}
\frac12\frac{d}{dt} \int_{\Omega} |\theta- \theta_{eq}^\beta -\theta_{avg}|^2 J dx  + \frac{\bar{\mu}_0}{ c_0c_1} \int_{\Omega}  |\theta- \theta_{eq}^\beta - -\theta_{avg} |^2 J dx  \leq 0,
\end{equation}
which easily implies \eqref{decay0}. 

To prove $(ii)$ we use the identity \eqref{ene2_n}, which followed from Lemma \ref{geo_energy}, and simply integrate in time.

For $(iii)$, we first note that from \eqref{ene2}, \eqref{core2}, and the Cauchy-Schwarz inequality,  
\begin{equation}
\begin{split}
\frac12\frac{d}{dt}&\int_\Omega  | \theta  -\theta_{eq}^\infty|^2J dx +\frac{\mu_\beta}{c_0^2 } \int_{\Omega}  |\theta -\theta_{eq}^\infty  |^2 J dx\\
&\leq  \p_3 \theta_{eq}^\beta 
 \int_\Omega     (\theta -\theta_{eq}^\infty) \left\{   \dt \bar{\eta} K \tilde{d}  -u_j  \a_{j3}  +\kappa  \a_{jl} \p_l\a_{j3}  \right\}  J dx \\
& \le \abs{\p_3 \theta_{eq}^\beta} \left( \int_\Omega  |\theta -\theta_{eq}^\infty|^2 J dx\right)^{\frac{1}{2}} \left( \int_\Omega   \abs{\dt \bar{\eta} K\tilde{d}  -u_j  \a_{j3}  +\kappa  \a_{jl} \p_l\a_{j3}}^2 J dx\right)^{\frac{1}{2}}
 \end{split}
\end{equation}
Lemmas \ref{eta_poisson} and \ref{eta_small} allow us to estimate 
\begin{equation}
\begin{split}
 \norm{\left(\dt \bar{\eta} K\tilde{d}  -u_j  \a_{j3}  +\kappa  \a_{jl} \p_l\a_{j3} \right) \sqrt{J}}_{L^2(\Omega)} 
&\le C \left( \norm{\dt \bar{\eta}}_{L^2(\Omega)} + \norm{u}_{L^2(\Omega)} + \norm{\nab \a}_{L^2(\Omega)} \right) \\
&\le C \left( \norm{\dt \eta}_{L^2(\Gamma)} + \norm{u}_{L^2(\Omega)} + \norm{\nab_\ast \eta}_{H^{1/2}(\Gamma)} \right) \\
&\le C g(t).
\end{split}
\end{equation}
Combining these,  we obtain 
\begin{equation}
\begin{split}
\frac12\frac{d}{dt}\int_\Omega  | \theta  -\theta_{eq}^\infty|^2J dx +\frac{\mu_\beta}{c_0^2 } \int_{\Omega}  |\theta -\theta_{eq}^\infty  |^2 J dx \leq C \abs{\p_3 \theta_{eq}^\beta}\left( \int_\Omega  |\theta -\theta_{eq}^\infty|^2 J dx\right)^\frac12  g(t),
\end{split}
\end{equation}
for a universal constant $C>0$.  Let $f:= \| (\theta-\theta^\infty_{eq})\sqrt{J} \|_{L^2(\Omega)}$. Then $f$ satisfies 
\begin{equation}
\frac{df}{dt} + \frac{\mu_\beta}{c_0^2 } f \leq C \abs{\p_3 \theta_{eq}^\beta} g(t) \quad\Longrightarrow \quad \frac{d}{dt}\left( e^{\frac{\mu_\beta}{c_0^2 }t } f\right)\leq C  \abs{\p_3 \theta_{eq}^\beta}  e^{\frac{\mu_\beta}{c_0^2 }t } g(t). 
\end{equation}
By integrating in $t$, we deduce \eqref{decay2}.

In order to establish $(iv)$, we first use the energy identity \eqref{ene2}  to obtain 
\begin{equation}\label{tmo_1} 
\begin{split}
\frac12\frac{d}{dt}&\int_\Omega  | \theta  -\theta_{eq}^\beta|^2J dx +  \mathcal{M}_\beta^t[\theta - \theta_{eq}^\beta]  \\
&\leq   \p_3 \theta_{eq}^\beta  \int_\Omega     (\theta -\theta_{eq}^\beta) \left\{   \dt \bar{\eta} \tilde{d}  -u_j J \a_{j3}  +\kappa J \a_{jl} \p_l\a_{j3}  \right\}  dx \\
 &\quad + \int_{\Sigma_+} \beta_+ (\bar{\theta} - \theta_{eq}^\beta) (\theta - \theta_{eq}^\beta) \abs{\n} (1- K \abs{\n}) \\
 &=: I + II . 
\end{split}
\end{equation}
For $I$  we argue as in case $(iii)$ to bound
\begin{equation}
 \abs{I} \le C \abs{\p_3 \theta_{eq}^\beta}\left( \int_\Omega  |\theta -\theta_{eq}^\infty|^2 J dx\right)^\frac12  g(t)
\end{equation}
for a universal constant $C>0$.  For $II$ we employ Cauchy's inequality, \eqref{non_es_01} of Lemma \ref{nonlin_ests}, and \eqref{es_01} of Lemma \ref{eta_small}   to bound
\begin{equation}\label{tmo_2}
\begin{split}
 \abs{II} & \le \hal \int_{\Sigma_+} \beta_+ \abs{\theta- \theta_{eq}^\beta}^2 \abs{\n} + \frac{\beta_+  }{2}\abs{\bar{\theta}-\theta_{eq}^\beta}^2 \int_{\Sigma_+} \abs{\n} \abs{1-K \abs{\n}}^2 \\
 &\le  \hal \mathcal{M}_\beta^t[\theta-\theta_{eq}^\beta] + C \frac{\beta_+  }{2}\abs{\bar{\theta}-\theta_{eq}^\beta}^2 \norm{\eta}_{H^1(\Gamma)}^2 \\
 &\le  \hal \mathcal{M}_\beta^t[\theta-\theta_{eq}^\beta] + C \frac{\beta_+  }{2}\abs{\bar{\theta}-\theta_{eq}^\beta}^2 h^2(t)
\end{split}
\end{equation}
for a universal constant $C>0$.  From \eqref{tmo_1}--\eqref{tmo_2}  we  deduce that 
\begin{multline}
\hal \frac{d}{dt}\int_\Omega  | \theta  -\theta_{eq}^\beta|^2J dx + \hal \mathcal{M}_\beta^t[\theta-\theta_{eq}^\beta] \\
\le C \abs{\p_3 \theta_{eq}^\beta}\left( \int_\Omega  |\theta -\theta_{eq}^\infty|^2 J dx\right)^\frac12  h(t) +  C \frac{\beta_+  }{2}\abs{\bar{\theta}-\theta_{eq}^\beta}^2 h^2(t).
\end{multline}
We then employ Proposition \ref{geo_coercive} and Cauchy's inequality to estimate 
\begin{multline}
\hal \frac{d}{dt}\int_\Omega  | \theta  -\theta_{eq}^\beta|^2J dx + \frac{\mu_\beta}{2c_0^2} \int_\Omega  | \theta  -\theta_{eq}^\beta|^2J  \le \frac{\mu_\beta}{4 c_0^2 }\int_\Omega  | \theta  -\theta_{eq}^\beta|^2J   \\
 +C \left( \abs{\p_3 \theta_{eq}^\beta}^2  \frac{c_0^2}{\mu_\beta}  + \frac{\beta_+  }{2}\abs{\bar{\theta}-\theta_{eq}^\beta}^2 \right) h^2(t).
\end{multline}
Hence 
\begin{equation}\label{tmo_3}
  \frac{d}{dt}\int_\Omega  | \theta  -\theta_{eq}^\beta|^2J dx + \frac{\mu_\beta}{2c_0^2} \int_\Omega  | \theta  -\theta_{eq}^\beta|^2J  \le C \left( \abs{\p_3 \theta_{eq}^\beta}^2  \frac{c_0^2}{\mu_\beta}  + \frac{\beta_+  }{2}\abs{\bar{\theta}-\theta_{eq}^\beta}^2 \right) h^2(t)
\end{equation}
for some universal $C>0$.  Then \eqref{decay3} follows from \eqref{tmo_3} and Gronwall's inequality.

\end{proof}


\appendix

\section{Poisson extension}

In this section we define the Poisson extension operator when $\Gamma= \R^2$ and when $\Gamma =  (L_1 \mathbb{T}) \times (L_2 \mathbb{T})$, and we record some useful estimates.  

For a function $f: \R^2 \to \R$  the Poisson extension in $\Rn{2} \times (-\infty,0)$  is defined by
\begin{equation}\label{poisson_def_inf}
 \mathcal{P}f(x',x_3) = \int_{\Rn{2}} \hat{f}(\xi) e^{2\pi \abs{\xi}x_3} e^{2\pi i x' \cdot \xi} d\xi,
\end{equation}
where we have employed the Fourier transform
\begin{equation}
 \hat{f}(\xi) = \int_{\R^2} f(x') e^{-2\pi i x' \cdot \xi} dx'
\end{equation}
for $\xi \in \R^2$.  On the other hand, for a function $f: (L_1 \mathbb{T}) \times (L_2 \mathbb{T}) \to \R$ we define the Poisson extension in $\Omega = (L_1 \mathbb{T}) \times (L_2 \mathbb{T}) \times (-\infty,0)$ by
\begin{equation}\label{poisson_def_per}
\mathcal{P} f(x',x_3) = \sum_{n \in   (L_1^{-1} \mathbb{Z}) \times (L_2^{-1} \mathbb{Z}) }  e^{2\pi i n \cdot x'} e^{2\pi \abs{n}x_3} \hat{f}(n).
\end{equation}
Here, for $n \in   (L_1^{-1} \mathbb{Z}) \times (L_2^{-1} \mathbb{Z})$ we have written
\begin{equation}
 \hat{f}(n) = \int_\Gamma f(x')  \frac{e^{-2\pi i n \cdot x'}}{L_1 L_2} dx'
\end{equation}
for the Fourier coefficient.

Although $\mathcal{P} f$ is defined in all of $\Gamma \times (-\infty,0)$, we will only need bounds on its norm in the restricted domain $\Omega = \Gamma \times (-d,0)$.  This yields a couple improvements of the usual estimates of $\mathcal{P} f$ on the set $\Gamma \times (-\infty,0)$.

\begin{lemma}\label{poisson_ests}
Assume that $\Gamma = \R^2$ or else $\Gamma = (L_1 \mathbb{T}) \times (L_2 \mathbb{T})$.  Let $\mathcal{P} f$ be the Poisson integral of a function $f: \Gamma \to \R$ define by either \eqref{poisson_def_inf} or \eqref{poisson_def_per}.  Suppose that $f$ belongs to either $\dot{H}^{q}(\Gamma)$ or else $\dot{H}^{q-1/2}(\Gamma)$ for some $q \in \mathbb{N}$ (here $\dot{H}^s$ is the usual homogeneous Sobolev space of order $s$).  Then
\begin{equation}\label{pe_01} 
 \norm{\nab^q \mathcal{P}f }_{L^2(\Omega)} \ls \norm{f}_{\dot{H}^{q-1/2}(\Gamma)} \text{ and }  \norm{\nab^q \mathcal{P}f }_{L^2(\Omega)} \ls \norm{f}_{\dot{H}^{q}(\Gamma)}.
\end{equation}
Also, for $q\in \mathbb{N}$ we have the estimate 
\begin{equation}\label{pe_02}
 \norm{\nab^q \mathcal{P}f }_{L^\infty(\Omega)} \ls \norm{f}_{H^{q+3/2}(\Gamma)}.
\end{equation}
\end{lemma}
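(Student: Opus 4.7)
The plan is to use the Fourier representation of $\mathcal{P}f$ directly. Since $\nab^q$ commutes with the horizontal Fourier transform and produces a symbol bounded pointwise by $|\xi|^q e^{2\pi|\xi|x_3}$ in the infinite case (with the obvious analog involving $|n|^q$ for sums over the dual lattice in the periodic case), all three estimates will reduce to one-variable computations of the vertical integral or supremum of $e^{2\pi|\xi|x_3}$.

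For the two $L^2$ bounds, I fix $x_3\in(-d,0)$, apply Plancherel horizontally, and then integrate over $x_3\in(-d,0)$. This reduces both estimates to bounding the Fourier symbol
\[
m(\xi) \;=\; |\xi|^{2q} \int_{-d}^0 e^{4\pi|\xi|x_3}\,dx_3 \;=\; |\xi|^{2q}\,\frac{1-e^{-4\pi|\xi|d}}{4\pi|\xi|}.
\]
Using $\frac{1-e^{-4\pi|\xi|d}}{4\pi|\xi|} \le \frac{1}{4\pi|\xi|}$ gives $m(\xi)\ls |\xi|^{2q-1}$, which matches the $\dot{H}^{q-1/2}(\Gamma)$ norm. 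Using instead the elementary inequality $\frac{1-e^{-s}}{s}\le 1$ for $s\ge 0$ gives $m(\xi)\le d\,|\xi|^{2q}$, which matches the $\dot{H}^q(\Gamma)$ norm. The latter bound is the only place where the restriction to the bounded slab $(-d,0)$ plays a role: on the full half-space the $x_3$-integral would diverge at $\xi=0$, so this improvement over the usual trace-type bound genuinely requires the finite vertical extent.

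For the $L^\infty$ estimate, I invert the Fourier transform pointwise and use $e^{2\pi|\xi|x_3}\le 1$ for $x_3\le 0$ together with Cauchy--Schwarz to write
\[
|\nab^q \mathcal{P}f(x)| \;\le\; \int_{\R^2} |\xi|^q |\hat f(\xi)|\,d\xi \;\le\; \left(\int_{\R^2} \frac{|\xi|^{2q}}{(1+|\xi|^2)^{q+3/2}}\,d\xi\right)^{1/2} \|f\|_{H^{q+3/2}(\Gamma)}.
\]
The weight integral converges in two dimensions because its integrand decays like $|\xi|^{-3}$ at infinity and is integrable near the origin. The periodic case is handled by the same arguments after replacing the $\xi$-integrals by sums over $(L_1^{-1}\mathbb{Z})\times(L_2^{-1}\mathbb{Z})$; the zero mode contributes nothing to $\nab^q$ when $q\ge 1$ and is trivially controlled when $q=0$. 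I do not foresee a real obstacle; the only genuinely noteworthy step is recognising that the $\dot{H}^q$ estimate (stronger than the standard $\dot{H}^{q-1/2}$ half-space bound at low frequency) depends on the bounded vertical range.
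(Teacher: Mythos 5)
Your proof is correct. The paper itself does not give an argument for this lemma; it simply cites Lemmas A.7--A.10 of Guo--Tice \cite{GT_lwp}, which treat the infinite and periodic cases separately. You have therefore reconstructed the content of those cited lemmas rather than followed a proof in the paper. Your route is the standard one: Plancherel in the horizontal variable, then explicit computation of the vertical integral of $e^{4\pi|\xi|x_3}$ over $(-d,0)$, with the two elementary bounds $\frac{1-e^{-s}}{s}\le 1$ and $1-e^{-s}\le 1$ yielding the $\dot H^q$ and $\dot H^{q-1/2}$ estimates respectively, and Cauchy--Schwarz against the weight $(1+|\xi|^2)^{-(q+3/2)}$ for the $L^\infty$ bound. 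The key structural observation you make---that the $\dot H^q$ bound is the one place where the finite vertical extent of $\Omega$ is actually used, while the $\dot H^{q-1/2}$ and $L^\infty$ bounds hold already on the half-space---matches exactly the paper's remark that restricting to $\Gamma\times(-d,0)$ ``yields a couple improvements of the usual estimates'' on the full half-space. One could be slightly more careful in the periodic case about the $n=0$ mode (at $n=0$ the vertical integral equals $d$, not $1/(4\pi|n|)$; the $\dot H^{q-1/2}$ side of \eqref{pe_01} then only makes sense if the homogeneous norm is understood to exclude the zero mode, which is the usual convention), but your parenthetical remark about the zero mode shows you are aware of this, and it does not affect correctness.
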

\begin{proof}
 These estimates are proved in \cite{GT_lwp}: Lemmas A.7 and A.8 handle the case $\Gamma = \R^2$ and Lemmas A.9 and A.10 handle the case $\Gamma = (L_1 \mathbb{T}) \times (L_2 \mathbb{T})$.
\end{proof}

\section{Some tools for the moving boundary problem}

In this section we record a number of results that are essential the analysis of the moving boundary problem.  We begin with some applications of Lemma \ref{poisson_ests} to $\eta$ and $\dt \eta$.

\begin{lemma}\label{eta_poisson}
We have the estimates 
\begin{equation}
\begin{split}
 &\norm{\bar{\eta}}_{H^3(\Omega)} \ls  \norm{\eta}_{H^{5/2}(\Gamma)} \\
 &\norm{\dt \bar{\eta} }_{L^2(\Omega) } \ls \norm{\dt \eta}_{L^2(\Gamma)} \\
 &\norm{\dt \bar{\eta}}_{L^\infty(\Omega) } \ls \norm{\dt \eta}_{H^{3/2}(\Gamma)}.
\end{split}
\end{equation}
\end{lemma}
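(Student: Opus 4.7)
The plan is to obtain all three estimates as direct consequences of Lemma \ref{poisson_ests} together with the key observation that the Poisson operator $\mathcal{P}$ acts only in the spatial variables of $\eta(t,\cdot)$. In particular, since $\mathcal{P}$ commutes with $\dt$, we have $\dt \bar{\eta} = \mathcal{P}(\dt \eta)$, which reduces the last two estimates to estimates for a Poisson extension of a single function on $\Gamma$.

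For the first estimate, I would unfold the norm as $\norm{\bar{\eta}}_{H^3(\Omega)}^2 = \sum_{q=0}^{3} \norm{\nab^q \bar{\eta}}_{L^2(\Omega)}^2$ and then apply the first estimate in \eqref{pe_01} term by term, obtaining $\norm{\nab^q \bar{\eta}}_{L^2(\Omega)} \ls \norm{\eta}_{\dot H^{q-1/2}(\Gamma)}$ for $q=0,1,2,3$. Summing these gives
\begin{equation*}
\norm{\bar{\eta}}_{H^3(\Omega)} \ls \sum_{q=0}^{3} \norm{\eta}_{\dot H^{q-1/2}(\Gamma)} \ls \norm{\eta}_{H^{5/2}(\Gamma)},
\end{equation*}
where the last inequality uses the standard interpolation/embedding bound that the inhomogeneous Sobolev norm of top order $5/2$ controls the full chain of homogeneous norms of order $\leq 5/2$. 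For the $q=0$ piece one could also use the second variant in \eqref{pe_01} to replace the $\dot H^{-1/2}$ quantity by $\norm{\eta}_{L^2(\Gamma)}$ if preferred.

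For the second estimate I would simply apply the $q=0$ case of the second variant of \eqref{pe_01} to the function $\dt \eta : \Gamma \to \R$, using $\dt \bar{\eta} = \mathcal{P}(\dt \eta)$, which gives $\norm{\dt \bar{\eta}}_{L^2(\Omega)} \ls \norm{\dt \eta}_{\dot H^0(\Gamma)} = \norm{\dt \eta}_{L^2(\Gamma)}$. For the third estimate I would similarly invoke \eqref{pe_02} with $q=0$ applied to $\dt \eta$, obtaining $\norm{\dt \bar{\eta}}_{L^\infty(\Omega)} = \norm{\mathcal{P}(\dt \eta)}_{L^\infty(\Omega)} \ls \norm{\dt \eta}_{H^{3/2}(\Gamma)}$.

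There is no real obstacle here; the proof is essentially bookkeeping on top of Lemma \ref{poisson_ests}. The only mild point to check is the admissibility of $q=0$ in the hypothesis ``$q \in \mathbb{N}$'' of Lemma \ref{poisson_ests}, which is the expected convention here (and if one prefers, the $q=0$ case of the $L^2$ bound can be recovered independently from a direct Plancherel argument on the strip $\Omega$, using $\int_{-d}^{0} e^{4\pi |\xi| x_3}\,dx_3 \le \min\{d,\,1/(4\pi|\xi|)\}$ to handle low and high horizontal frequencies separately).
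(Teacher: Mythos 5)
Your proof is correct and takes essentially the same route as the paper, which simply cites Lemma \ref{poisson_ests} for all three bounds; you have just filled in the bookkeeping. One small caution: the claim that $\norm{\eta}_{H^{5/2}(\Gamma)}$ controls all homogeneous norms of order $\le 5/2$ is false at order $-1/2$ when $\Gamma = \R^2$ (low frequencies can make $\norm{\eta}_{\dot H^{-1/2}}$ infinite even for $\eta \in H^{5/2}$), so for the $q=0$ term you must use the second variant $\norm{\mathcal{P}\eta}_{L^2(\Omega)} \ls \norm{\eta}_{\dot H^{0}(\Gamma)} = \norm{\eta}_{L^2(\Gamma)}$, which you do flag as an alternative; it is not merely a preference but the necessary choice in the horizontally infinite setting.
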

\begin{proof}
 The first two estimates follow from \eqref{pe_01} of Lemma \ref{poisson_ests}, while the third follows from \eqref{pe_02}.  
\end{proof}

Our next result provides some useful estimates for the terms  $A,$ $B,$ $J,$ $K,$  $\mathcal{A}$, and $\n$ defined in \eqref{A_def}, \eqref{ABJ_def}, and \eqref{N_def}.

\begin{lemma}\label{eta_small}
There exists a universal $0 < \delta < 1$ so that if $\norm{\eta}_{H^{5/2}(\Gamma)} \le \delta$, then 
\begin{equation}\label{es_01}
\begin{split}
 & \norm{J-1}_{L^\infty(\Omega)} +\norm{A}_{L^\infty(\Omega)} + \norm{B}_{L^\infty(\Omega)} \le \hal, \\
 & \norm{\n-1}_{L^\infty(\Gamma)} + \norm{K-1}_{L^\infty(\Gamma)} \le \hal, \text{ and }  \\
 & \norm{K}_{L^\infty(\Omega)} + \norm{\mathcal{A}}_{L^\infty(\Omega)} \ls 1.
 \end{split}
\end{equation}
Moreover, 
\begin{equation}\label{es_02}
 \hal \int_\Omega \abs{\varphi}^2 \le \int_\Omega J \abs{ \varphi}^2 \le 2 \int_\Omega \abs{\varphi}^2,
\end{equation}
\begin{equation}
 \hal \int_\Omega \abs{\nab \varphi}^2 \le \int_\Omega J \abs{\naba \varphi}^2 \le 2 \int_\Omega \abs{\nab \varphi}^2,
\end{equation}
\begin{equation}
 \hal \int_{\Sigma_+} \abs{\varphi}^2 \le \int_{\Sigma_+}  \abs{\varphi}^2 \abs{\n} \le 2 \int_{\Sigma_+} \abs{\varphi}^2 
\end{equation}
\begin{equation}\label{es_03}
 \hal \int_{\Sigma_-} \abs{\varphi}^2 \le \int_{\Sigma_-}  \abs{\varphi}^2 K \le 2 \int_{\Sigma_-} \abs{\varphi}^2 
\end{equation}
for all $\varphi \in H^1(\Omega)$.
\end{lemma}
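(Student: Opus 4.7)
The plan is to derive everything from a single $L^\infty$ smallness estimate on $\bar\eta$ and its derivatives, and then propagate that smallness through the algebraic definitions of $A, B, J, K, \mathcal{A}, \mathcal{N}$.

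First, I would invoke Lemma \ref{eta_poisson} to bound $\norm{\bar\eta}_{H^3(\Omega)} \lesssim \norm{\eta}_{H^{5/2}(\Gamma)}$, and then apply the Sobolev embedding $H^3(\Omega) \hookrightarrow W^{1,\infty}(\Omega)$ (and the trace embedding $H^{5/2}(\Gamma) \hookrightarrow W^{1,\infty}(\Gamma)$) to obtain
\begin{equation*}
\norm{\bar\eta}_{L^\infty(\Omega)} + \norm{\nabla \bar\eta}_{L^\infty(\Omega)} + \norm{\nabla_\ast \eta}_{L^\infty(\Gamma)} \lesssim \norm{\eta}_{H^{5/2}(\Gamma)} \le \delta.
\end{equation*}
Recall the definitions $A = \partial_1 \bar\eta\, \tilde d$, $B = \partial_2 \bar\eta\, \tilde d$, $J = 1 + \bar\eta/d + \partial_3 \bar\eta\, \tilde d$, and $\tilde d \in [0,1]$ on $\Omega$. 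It follows immediately that $\norm{A}_{L^\infty} + \norm{B}_{L^\infty} + \norm{J-1}_{L^\infty} \le C\delta$ for a universal constant $C$. Similarly, $\n - e_3 = -\partial_1 \eta\, e_1 - \partial_2 \eta\, e_2$ gives $\norm{\n - e_3}_{L^\infty(\Gamma)} \le C\delta$, hence $\abs{\n} = \sqrt{1 + \abs{\nabla_\ast \eta}^2}$ satisfies $\norm{\abs{\n}-1}_{L^\infty} \le C\delta$.

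Next, I would choose $\delta$ small enough (depending only on the universal $C$) so that all of the above are at most $1/4$. Then $J \ge 3/4$, so $K = J^{-1}$ is well-defined with $\norm{K-1}_{L^\infty} \le 1/2$ by the elementary inequality $\abs{x^{-1}-1} \le 2\abs{x-1}$ for $x$ near $1$. The matrix $\mathcal{A}$ given in \eqref{A_def} then has entries $1, -AK, -BK, K$, each bounded uniformly in $L^\infty$, establishing \eqref{es_01}.

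The integral comparisons \eqref{es_02}--\eqref{es_03} are direct consequences of the pointwise bounds. Since $1/2 \le J \le 3/2$ and $1/2 \le \abs{\n} \le 3/2$ and $1/2 \le K \le 3/2$ pointwise, multiplying the integrand by these weights changes the integral by at most a factor of $2$ (after possibly shrinking $\delta$ once more so that the pointwise deviations are at most $1/2$). The only non-trivial comparison is $\int_\Omega J \abs{\naba \varphi}^2 \sim \int_\Omega \abs{\nabla \varphi}^2$. For this I would write $\mathcal{A} = I + E$ where $E$ has entries among $\{0, -AK, -BK, K-1\}$, each of $L^\infty$-norm at most $C\delta$, so that $\abs{\naba \varphi - \nabla \varphi} \le C\delta \abs{\nabla \varphi}$ pointwise; combined with $J \in [1/2, 3/2]$ and another small choice of $\delta$, this yields both the upper and lower bounds.

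The only mildly delicate step is the lower bound in the $\naba$ comparison: one must be careful that the perturbation $E$ does not destroy ellipticity. This is handled by the triangle inequality $\abs{\naba \varphi} \ge \abs{\nabla \varphi} - C\delta \abs{\nabla \varphi} \ge (1/2)\abs{\nabla \varphi}$ for $\delta$ small, after which squaring and multiplying by $J \ge 1/2$ gives the desired lower bound. No obstacle beyond bookkeeping of constants arises; the final $\delta$ is the minimum of the finitely many smallness thresholds imposed above.
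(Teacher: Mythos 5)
Your proof is correct, and it is a genuinely different route from the paper's. The paper disposes of Lemma \ref{eta_small} entirely by citation: \eqref{es_01} is attributed to Lemma 2.6 of \cite{GT_inf} (infinite case) and Lemma 2.4 of \cite{GT_per} (periodic case), and \eqref{es_02}--\eqref{es_03} are said to follow as in Lemma 2.1 of \cite{GT_lwp}. You instead give a self-contained argument: bound $\bar\eta$ in $W^{1,\infty}$ via Lemma \ref{eta_poisson} and Sobolev embedding, then propagate smallness through the algebraic formulas for $A$, $B$, $J$, $K$, $\mathcal{A}$, $\n$, invert $J$ using a quantitative lower bound, and treat the $\nab_{\mathcal{A}}$ comparison by writing $\mathcal{A} = I + E$ with $\|E\|_{L^\infty}$ small so that ellipticity is preserved. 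Every step is sound (in particular $\tilde d \in [0,1]$ on $\Omega$, and $\Omega$ is a uniformly Lipschitz slab so $H^3(\Omega) \hookrightarrow W^{1,\infty}(\Omega)$ is valid). One small stylistic note: you could bypass the Sobolev-embedding step by invoking \eqref{pe_02} of Lemma \ref{poisson_ests} directly with $q=0,1$ to get $\norm{\bar\eta}_{W^{1,\infty}(\Omega)} \lesssim \norm{\eta}_{H^{5/2}(\Gamma)}$, which is cleaner and avoids any concern about embeddings on unbounded domains. The trade-off between the two proofs is the usual one: citation is terse but opaque, while your direct argument is longer but makes the dependence of $\delta$ on universal constants fully transparent.
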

\begin{proof}
The estimate \eqref{es_01} is guaranteed by Lemma 2.6 of \cite{GT_inf}  when $\Gamma = \mathbb{R}^2$ and by Lemma 2.4 of \cite{GT_per} when $\Gamma = (L_1 \mathbb{T}) \times (L_2 \mathbb{T})$.  The estimates \eqref{es_02}--\eqref{es_03} then follow from \eqref{es_01} as in Lemma 2.1 of \cite{GT_lwp}.
\end{proof}

Next we record an estimate of $\mathcal{A}$ in $H^1(\Omega)$.

\begin{lemma}\label{nonlin_ests}
Let $\delta \in (0,1)$ be as in Lemma \ref{eta_small} and suppose that  $\norm{\eta}_{H^{5/2}(\Gamma)}^2 \le \delta$.  Then
\begin{equation}\label{non_es_00}
 \norm{\nab \mathcal{A} }_{L^2(\Omega)} \ls  \norm{\nab_\ast \eta}_{H^{1/2}(\Gamma)}  
\end{equation}
where $\nab_\ast \eta = (\p_1 \eta,\p_2 \eta)$ is the horizontal gradient; also 
\begin{equation}\label{non_es_01}
 \norm{1 - K \abs{\n}}_{L^2(\Gamma)} \ls \norm{\eta}_{H^{1}(\Gamma)}.
\end{equation}

\end{lemma}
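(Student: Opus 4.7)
My plan is to prove the two estimates by reducing each to bounds on the Poisson extension $\bar{\eta}$ via Lemma \ref{poisson_ests}, combined with the pointwise $L^\infty$ bounds from Lemma \ref{eta_small} and, for the nonlinear boundary term, 2D Sobolev embedding. The underlying identity I will lean on throughout is that Poisson extension commutes with horizontal differentiation, so $\p_i \bar{\eta} = \mathcal{P}(\p_i \eta)$ for $i=1,2$, and a symbol computation shows $\p_3 \bar{\eta} = \mathcal{P}(|\nab_\ast| \eta)$.

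For \eqref{non_es_00}, I first observe that the nontrivial entries of $\mathcal{A}$ are $-AK$, $-BK$, and $K$, so by the product rule $\nab \mathcal{A}$ is a linear combination of $\nab A$, $\nab B$, $\nab K$ and of $A\nab K$, $B \nab K$, $K\nab A$, $K\nab B$. Since $A = \p_1 \bar{\eta} \tilde{d}$ with $\tilde{d}$ and $\p_3 \tilde d = 1/d$ bounded, $\nab A$ is pointwise controlled by $|\nab \p_1 \bar{\eta}| + |\p_1 \bar{\eta}|$, and likewise for $B$. Applying \eqref{pe_01} of Lemma \ref{poisson_ests} with $q=1$ to $\p_i \eta$ yields $\norm{\nab \p_i \bar{\eta}}_{L^2(\Omega)} \ls \norm{\p_i \eta}_{\dot H^{1/2}(\Gamma)} \ls \norm{\nab_\ast \eta}_{H^{1/2}(\Gamma)}$, and similarly $\norm{\p_i \bar{\eta}}_{L^2(\Omega)} \ls \norm{\nab_\ast\eta}_{H^{1/2}}$. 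For the $K$ pieces I use $K = 1/J$ together with $J \ge 1/2$ (Lemma \ref{eta_small}) to reduce $\nab K$ pointwise to $|\nab J| \le |\nab \bar{\eta}|/d + |\nab \p_3 \bar{\eta}| + |\p_3\bar{\eta}|/d$. Each summand is bounded in $L^2(\Omega)$ by Lemma \ref{poisson_ests}, and since $\p_3 \bar{\eta}$ is the Poisson extension of $|\nab_\ast|\eta$ these all land in $\|\nab_\ast \eta\|_{H^{1/2}(\Gamma)}$. Finally, products like $A \nab K$ and $K \nab A$ are handled by pulling out $\norm{A}_{L^\infty} + \norm{K}_{L^\infty} \ls 1$ from Lemma \ref{eta_small}.

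For \eqref{non_es_01}, since $J \ge 1/2$ I have $|1 - K\abs{\n}| = |J - \abs{\n}|/J \le 2 |J - \abs{\n}|$ pointwise on $\Sigma_+$. On the upper boundary $\tilde{d}=1$ and $\bar{\eta} = \eta$, so $J\vert_{\Sigma_+} = 1 + \eta/d + \p_3 \bar{\eta}\vert_{\Sigma_+}$ and $\abs{\n} = \sqrt{1 + |\nab_\ast \eta|^2}$. Thus
\begin{equation*}
J\vert_{\Sigma_+} - \abs{\n} = \frac{\eta}{d} + \p_3 \bar{\eta}\vert_{\Sigma_+} + \bigl(1 - \sqrt{1 + |\nab_\ast \eta|^2}\bigr).
\end{equation*}
The first summand is trivially controlled by $\norm{\eta}_{L^2(\Gamma)} \le \norm{\eta}_{H^1(\Gamma)}$. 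For the second, $\p_3 \bar\eta\vert_{\Sigma_+} = |\nab_\ast|\eta$ as a Fourier multiplier, so its $L^2(\Gamma)$ norm is $\norm{\nab_\ast \eta}_{L^2(\Gamma)} \le \norm{\eta}_{H^1(\Gamma)}$. For the third I use the elementary inequality $|1 - \sqrt{1+r}| \le r/2$ for $r \ge 0$, giving $|1 - \sqrt{1+|\nab_\ast \eta|^2}| \le \tfrac12 |\nab_\ast \eta|^2$; then 2D Sobolev embedding $H^{5/2}(\Gamma) \hookrightarrow W^{1,\infty}(\Gamma)$ together with the smallness \eqref{weak_geo_bound} yields $\bigl\||\nab_\ast \eta|^2\bigr\|_{L^2} \le \norm{\nab_\ast \eta}_{L^\infty} \norm{\nab_\ast \eta}_{L^2} \ls \delta \, \norm{\eta}_{H^1(\Gamma)}$.

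The main obstacle is largely bookkeeping: for the first estimate one must match up the many Leibniz-rule terms with the correct splitting into an $L^\infty$ factor (handled by Lemma \ref{eta_small}) and an $L^2$ factor (handled by Lemma \ref{poisson_ests}) so that exactly the half-derivative loss $H^{1/2}$ of $\nab_\ast \eta$ appears on the right-hand side, rather than the a priori available $H^{3/2}$. For the second estimate the subtlety is isolating the linear-in-$\eta$ structure: the truly nonlinear piece $1 - \sqrt{1+|\nab_\ast\eta|^2}$ would only yield a quadratic bound on its own, so the smallness of $\nab_\ast \eta$ in $L^\infty$ (via Lemma \ref{eta_small} and $H^{5/2} \hookrightarrow W^{1,\infty}$) is essential to absorb one of the two copies of $\nab_\ast \eta$ and recover a linear $H^1$ bound.
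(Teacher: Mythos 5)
Your proof is correct and follows essentially the same strategy as the paper's: expand $\nab\a$ via Leibniz into pointwise products, extract $L^\infty$ factors with Lemma \ref{eta_small}, and control the remaining $L^2$ factors with the Poisson extension estimates of Lemma \ref{poisson_ests}; then for \eqref{non_es_01} decompose $1-K\abs{\n}$ and estimate piece by piece. The one genuine difference is in the $\abs{1-\abs{\n}}$ term. You use $\abs{1-\sqrt{1+r}}\le r/2$ to get a quadratic bound $\tfrac12\abs{\nab_\ast\eta}^2$, and then must spend the smallness hypothesis together with the embedding $H^{5/2}(\Gamma)\hookrightarrow W^{1,\infty}(\Gamma)$ to absorb one factor of $\nab_\ast\eta$ and recover linearity. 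The paper instead observes the elementary pointwise bound
\begin{equation*}
\abs{1-\abs{\n}} = \frac{\abs{\p_1\eta}^2+\abs{\p_2\eta}^2}{1+\sqrt{1+\abs{\p_1\eta}^2+\abs{\p_2\eta}^2}}\le\sqrt{\abs{\p_1\eta}^2+\abs{\p_2\eta}^2},
\end{equation*}
which is already linear in $\nab_\ast\eta$ and yields $\norm{1-\abs{\n}}_{L^2(\Gamma)}\le\norm{\eta}_{H^1(\Gamma)}$ with no smallness or embedding at all. Both routes close, but the paper's is cleaner and keeps that particular estimate manifestly $\delta$-independent. (Cosmetically the paper also writes $1-K\abs{\n}=K(J-1)-K(\abs{\n}-1)$ and bounds $\norm{J-1}_{L^2(\Gamma)}$, rather than expanding $J-\abs{\n}$ on $\Sigma_+$ as you do; these decompositions are equivalent.) Your handling of \eqref{non_es_00}, including applying the Poisson estimates to $\p_i\eta$ and using $\p_3\bar\eta=\mathcal{P}(\abs{\nab_\ast}\eta)$ to land exactly in $\norm{\nab_\ast\eta}_{H^{1/2}(\Gamma)}$, matches the paper's intent and is, if anything, written with slightly more care about which function the lemma is applied to.
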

\begin{proof}
According to \eqref{A_def} and \eqref{ABJ_def} we may trivially estimate 
\begin{equation}
 \begin{split}
 \abs{\nab \mathcal{A}}^2 &= \abs{\nab(AK)}^2 + \abs{\nab(BK)}^2 + \abs{\nab K}^2  \\
&\ls \left( \abs{\nab A}^2 + \abs{\nab B}^2 \right)\abs{K}^2 + (1 + \abs{A}^2 + \abs{B}^2) \abs{K}^4 \abs{\nab J}^2 \\
&\ls \left( \abs{\nab \bar{\eta} }^2 + \abs{\nab^2 \bar{\eta} }^2 \right) \left(\abs{K}^2 + \abs{K}^4(1 + \abs{A}^2 + \abs{B}^2) \right). 
 \end{split}
\end{equation}
We may then use the  $L^\infty(\Omega)$ estimates in \eqref{es_01} of Lemma \ref{eta_small} to estimate 
\begin{equation}\label{non_es_1}
 \norm{\nab \mathcal{A}}_{L^2(\Omega)}^2 \ls     \norm{\nab \bar{\eta} }_{L^2(\Omega)}^2 +  \norm{\nab^2 \bar{\eta} }_{L^2(\Omega)}^2.
\end{equation}
The estimate \eqref{non_es_00} then follows from \eqref{non_es_1} and \eqref{pe_01} of Lemma \ref{poisson_ests}.

Next, we rewrite
\begin{equation}
 1- K \abs{\n} = 1-K - K(\abs{\n}-1) = K(J-1) - K (\abs{\n}-1).
\end{equation}
Then Lemma \ref{eta_small} allows us to estimate 
\begin{equation}\label{non_es_2}
  \norm{1 - K \abs{\n}}_{L^2(\Gamma)}  \ls  \norm{J-1}_{L^2(\Gamma)}  +  \norm{1 -  \abs{\n}}_{L^2(\Gamma)}.
\end{equation}
We then use the definition of $J$ and the Poisson extension to estimate 
\begin{equation}
 \norm{J-1}_{L^2(\Gamma)}  \ls \norm{\eta}_{H^1(\Gamma)}. 
\end{equation}
On the other hand,
\begin{equation}
 \abs{1 -  \abs{\n}} = \frac{\abs{\p_1 \eta}^2 + \abs{\p_2 \eta}^2 }{1 + \sqrt{1+ \abs{\p_1 \eta}^2 + \abs{\p_2 \eta}^2}} \le \sqrt{\abs{\p_1 \eta}^2 + \abs{\p_2 \eta}^2}
\end{equation}
and hence
\begin{equation}\label{non_es_3}
 \norm{1 -  \abs{\n}}_{L^2(\Gamma)} \le \norm{\eta}_{H^1(\Gamma)}.
\end{equation}
The estimate \eqref{non_es_01} then follows by combining \eqref{non_es_2}--\eqref{non_es_3}.

\end{proof}

Next we present some identities identities for $J$ and $\a$.

\begin{lemma}\label{geometric_ids}
 Let $\a$, $J$, and $\n$ be as in \eqref{A_def}, \eqref{ABJ_def}, and \eqref{N_def}.  Then we have the following identities:
\begin{align}
\p_k (J\a_{ik})=0 \quad \text{ in }\Omega, \label{id1}\\
 \dt J  = \dt \bar{\eta}/d + \tilde{d} \dt \p_3 \bar{\eta}=\p_3(\dt \bar{\eta}   \tilde{d}  )\quad \text{ in }\Omega,\label{id2} \\
J \mathcal{A}_{j3} = \n_j\quad \text{ on } \Sigma, \label{id3} \\
J \mathcal{A}_{j3} = (e_3)_j\quad \text{ on } \Sigma. \label{id4}
\end{align}
\end{lemma}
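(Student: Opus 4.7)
The plan is to reduce all four identities to a single explicit calculation of the matrix $J\mathcal{A}$. Since $\mathcal{A} = (\nabla\Phi^{-1})^T$ and $J = \det\nabla\Phi$, Cramer's rule gives $J\mathcal{A} = \operatorname{cof}(\nabla\Phi)$. Using the explicit form of $\nabla\Phi$ from \eqref{A_def} and the identity $JK = 1$ from \eqref{ABJ_def}, I would first compute
\begin{equation}
J\mathcal{A} = \begin{pmatrix} J & 0 & -A \\ 0 & J & -B \\ 0 & 0 & 1 \end{pmatrix}.
\end{equation}
This is a short direct check (either multiply $J\mathcal{A}$ by $\nabla\Phi$ and verify one gets $J\,I$, or just use the cofactor formula on the block lower-triangular structure). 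All four identities will then follow from this formula together with the definitions $A = \p_1\bar\eta\,\tilde d$, $B = \p_2\bar\eta\,\tilde d$, $J = 1 + \bar\eta/d + \p_3\bar\eta\,\tilde d$, and $\tilde d = 1 + x_3/d$.

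For \eqref{id1} (Piola's identity), I would compute $\p_k(J\mathcal{A}_{ik})$ row by row. For $i=3$ it is trivially $0$ since the bottom row is $(0,0,1)$. For $i=1$, $\p_k(J\mathcal{A}_{1k}) = \p_1 J - \p_3 A$, and plugging in the definitions yields
\begin{equation}
\p_1 J - \p_3 A = \bigl(\p_1\bar\eta/d + \p_1\p_3\bar\eta\,\tilde d\bigr) - \bigl(\p_1\p_3\bar\eta\,\tilde d + \p_1\bar\eta/d\bigr) = 0,
\end{equation}
using $\p_3\tilde d = 1/d$; $i=2$ is identical. For \eqref{id2}, I would just differentiate the formula for $J$ in $t$ (noting $\tilde d$ is $t$-independent) and then check $\p_3(\dt\bar\eta\,\tilde d) = \dt\p_3\bar\eta\,\tilde d + \dt\bar\eta/d$ to get both equalities at once.

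For the boundary identities: on $\Sigma_+ = \{x_3 = 0\}$ we have $\tilde d = 1$ and $\bar\eta\vert_{x_3=0} = \eta$ (harmonic extension boundary value), so $A\vert_{\Sigma_+} = \p_1\eta$ and $B\vert_{\Sigma_+} = \p_2\eta$; reading off the third column of $J\mathcal{A}$ gives $(J\mathcal{A}_{j3})_{j=1,2,3} = (-\p_1\eta,-\p_2\eta,1) = \mathcal{N}$, establishing \eqref{id3}. On $\Sigma_- = \{x_3 = -d\}$ (the statement \eqref{id4} should read ``on $\Sigma_-$'', which is a clear transcription slip in the displayed equation), we have $\tilde d = 0$, so $A = B = 0$ and the third column reduces to $(0,0,1) = e_3$, giving \eqref{id4}.

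None of these steps is really an obstacle; the only place one could slip up is in bookkeeping the transpose when writing $J\mathcal{A} = \operatorname{cof}(\nabla\Phi)$, so I would double-check that first step by verifying $(J\mathcal{A})(\nabla\Phi) = J\,I$ explicitly. Once that matrix is in hand, the four identities are each a one- or two-line calculation.
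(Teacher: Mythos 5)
Your proof is correct, and it fills in details that the paper itself does not supply: the paper's ``proof'' of Lemma \ref{geometric_ids} is a one-line citation to Lemmas 2.1 and 2.2 of \cite{GT_inf}, so there is no in-paper argument to compare against. Your computation is the natural direct verification. The key reduction to the cofactor matrix $J\a = \operatorname{cof}(\nab\Phi)$ is right (with $\a = ((\nab\Phi)^{-1})^T$ and $JK=1$ this gives the displayed matrix), and each of the four identities then falls out as you describe: the Piola identity \eqref{id1} by the product rule and $\p_3\tilde d = 1/d$; \eqref{id2} by $t$-independence of $\tilde d$; \eqref{id3} by $\tilde d = 1$ and $\bar\eta\vert_{x_3=0}=\eta$ on $\Sigma_+$; and \eqref{id4} by $\tilde d = 0$ on $\Sigma_-$. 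You are also correct that \eqref{id4} must be read on $\Sigma_-$ rather than on $\Sigma$ --- this is a transcription slip in the statement, and the paper's own use of \eqref{id4} in the proof of Lemma \ref{geo_energy} (where it is invoked ``on $\Sigma_d$'', i.e.\ where $\tilde d = 0$) confirms the intended domain. Your sanity check of verifying $(J\a)(\nab\Phi) = JI$ is a good safeguard against the transpose bookkeeping.
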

\begin{proof}
 The first, third, fourth identities may be found in  Lemma 2.1 of \cite{GT_inf}.  The second is in the proof of Lemma 2.2 of the same paper. 
\end{proof}

Finally, we record an energy identity for smooth solutions of \eqref{geometric1} that motivates the definition of weak solution in Definition \ref{weak_moving_def}.

\begin{lemma}\label{mov_smooth_ident}
 Suppose $\theta$ is a smooth solution to \eqref{geometric1}.  Let $C_t^\beta$ and $F_t^\beta$ be as Definition \ref{weak_moving_def}.   Then
\begin{multline}\label{sid_00} 
\int_\Omega  J \dt \left(\theta -\theta_{eq}^\beta \right) v +  C_t^\beta(\theta-\theta_{eq}^\beta,v) + \int_\Omega \kappa J \naba (\theta-\theta_{eq}^\beta) \cdot \naba v \\
+ \int_\Omega  u \cdot \naba (\theta-\theta_{eq}^\beta) v J  - \dt \bar{\eta} \tilde{d} \p_3 (\theta-\theta_{eq}^\beta) v\\
  = F_t^\beta(v) + \int_\Omega \left( \dt \bar{\eta} \tilde{d} K \p_3 \theta_{eq}^\beta  -u_j \a_{jk} \p_k\theta_{eq}^\beta +\kappa \a_{jl} \p_l \left(\a_{jk} \p_k \theta_{eq}^\beta \right) \right) v J
\end{multline}
for every $v \in H^1_\beta(\Omega)$ (defined by \eqref{sobolev_beta}).

\end{lemma}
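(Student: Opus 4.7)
The plan is to multiply the PDE \eqref{geometric1} by $Jv$ for a test function $v \in H^1_\beta(\Omega)$, integrate over $\Omega$, and integrate by parts so that the diffusion and boundary operators produce exactly $C_t^\beta$, $F_t^\beta$, and the bilinear form $\int_\Omega \kappa J \naba(\cdot) \cdot \naba v$. The only nontrivial ingredients are the geometric identities in Lemma \ref{geometric_ids} and the boundary conditions for both $\theta$ and $\theta_{eq}^\beta$.

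First I would rewrite the equation multiplied by $Jv$, using $JK=1$ to reduce $\dt\bar{\eta}\tilde{d}KJ\p_3\theta$ to $\dt\bar{\eta}\tilde{d}\p_3\theta$. For the diffusion term $-\kappa\int_\Omega Jv\,\da\theta = -\kappa\int_\Omega Jv\,\a_{ij}\p_j(\a_{ik}\p_k\theta)$, identity \eqref{id1} lets me pull $J\a_{ij}$ inside the derivative, producing the divergence $\p_j(J\a_{ij}\a_{ik}\p_k\theta)v$. Integrating by parts in $j$ then yields the interior term $\kappa\int_\Omega J\naba v\cdot\naba\theta$ together with boundary contributions. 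On $\Sigma_+$, \eqref{id3} gives $J\a_{ij}(e_3)_j = \n_i$, so the boundary integrand becomes $v\,\n_i\a_{ik}\p_k\theta = v|\n|\naba\theta\cdot\nu$, which by the Robin condition equals $v|\n|\beta_+(\bar\theta-\theta)$. On $\Sigma_-$, \eqref{id4} gives $J\a_{ij}(-e_3)_j = -\delta_{i3}$, leaving $-v\a_{3k}\p_k\theta = -vK\p_3\theta$, which by the Robin condition becomes $-vK\beta_-\theta$.

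Next I would split $\theta = (\theta - \theta_{eq}^\beta) + \theta_{eq}^\beta$. The $(\theta-\theta_{eq}^\beta)$ part already produces the desired $C_t^\beta(\theta-\theta_{eq}^\beta,v)$ and $\kappa\int J\naba(\theta-\theta_{eq}^\beta)\cdot\naba v$ on the left-hand side. The equilibrium part produces $\kappa\int_\Omega J\naba v\cdot\naba\theta_{eq}^\beta$ plus boundary terms from $\theta_{eq}^\beta$, which I would integrate by parts back in the other direction (again using \eqref{id1} and the fact that $\p_3\theta_{eq}^\beta$ is constant) to generate the interior term $+\kappa\int_\Omega J\a_{jl}\p_l(\a_{jk}\p_k\theta_{eq}^\beta)v$ on the right-hand side of \eqref{sid_00}. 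On $\Sigma_-$ this reverse integration gives $-\int_{\Sigma_-}vK\beta_-\theta_{eq}^\beta$, which combines with the earlier $-\int_{\Sigma_-}vK\beta_-\theta$ into $+\int_{\Sigma_-}vK\beta_-(\theta-\theta_{eq}^\beta)$, matching the $\Sigma_-$ part of $C_t^\beta$.

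The part I expect to be trickiest is producing $F_t^\beta$ on $\Sigma_+$. There, the reverse integration yields $\kappa\int_{\Sigma_+}v\,\n_i\a_{i3}\p_3\theta_{eq}^\beta$; a direct computation shows $\n_i\a_{i3}|_{\Sigma_+}=K(1+|\p_1\eta|^2+|\p_2\eta|^2)=K|\n|^2$, so this equals $\int_{\Sigma_+}vK|\n|^2\beta_+(\bar\theta-\theta_{eq}^\beta)$. Writing $K|\n|^2 = |\n|-|\n|(1-K|\n|)$ cleanly separates this into $\int_{\Sigma_+}v|\n|\beta_+(\bar\theta-\theta_{eq}^\beta)$, which combines with the earlier $-\int_{\Sigma_+}v|\n|\beta_+(\bar\theta-\theta)$ into the $\Sigma_+$ portion of $C_t^\beta(\theta-\theta_{eq}^\beta,v)$, and the leftover piece $-\int_{\Sigma_+}v|\n|\beta_+(\bar\theta-\theta_{eq}^\beta)(1-K|\n|) = -F_t^\beta(v)$, which moves to the right-hand side as $+F_t^\beta(v)$. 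The Dirichlet cases $\beta_\pm=\infty$ are handled identically, with the corresponding boundary integrals vanishing because $v=0$ on that face, and $F_t^\beta$ vanishing in the $\beta_+=\infty$ case for the same reason. Reassembling the time-derivative, convection, and diffusion contributions then produces exactly \eqref{sid_00}.
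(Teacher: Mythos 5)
Your proposal is correct and follows essentially the same route as the paper's proof: multiply by $Jv$, use \eqref{id1} to write the diffusion term as a divergence, integrate by parts, apply \eqref{id3}/\eqref{id4} and the Robin conditions for both $\theta$ and $\theta_{eq}^\beta$, and isolate the $1 - K\abs{\n}$ factor on $\Sigma_+$; the paper simply does the split $\theta = (\theta-\theta_{eq}^\beta)+\theta_{eq}^\beta$ \emph{before} integrating by parts rather than after, and your explicit identity $\n_i\a_{i3}\vert_{\Sigma_+} = K\abs{\n}^2 = \abs{\n} - \abs{\n}(1-K\abs{\n})$ makes the origin of $F_t^\beta$ a bit more transparent than the paper's inline manipulation in its equation labeled \eqref{sid_5}. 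Be careful with the $\Sigma_-$ sign bookkeeping, however: with outward normal $-e_3$ the boundary term from the integration by parts comes out as $+\kappa\int_{\Sigma_-}K\,\p_3\theta\,v = +\int_{\Sigma_-}K\beta_-\theta\,v$ (not $-$), and once you also track the $\theta_{eq}^\beta$ contribution from the right-hand side the two $\Sigma_-$ pieces do reassemble to $+\int_{\Sigma_-}K\beta_-(\theta-\theta_{eq}^\beta)v$, consistent with $C_t^\beta$, but the signs as you've written them do not.
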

\begin{proof}
  We rewrite the $\theta$-equation in \eqref{geometric1} in a perturbed form around $\theta_{eq}^\beta$: 
\begin{equation}\label{sid_1}
\begin{split}
  \dt \left(\theta -\theta_{eq}^\beta \right)- \dt \bar{\eta} \tilde{d} K \p_3 
  \left( \theta  -\theta_{eq}^\beta \right)&+ u\cdot \naba \left( \theta  -\theta_{eq}^\beta \right) -\kappa 
\a_{jl} \p_l \left(\a_{jk} \p_k\left( \theta  -\theta_{eq}^\beta \right)\right) \\
&=    \dt \bar{\eta} \tilde{d} K \p_3 \theta_{eq}^\beta  -u_j \a_{jk} \p_k\theta_{eq}^\beta +\kappa \a_{jl} \p_l \left(\a_{jk} \p_k \theta_{eq}^\beta \right).
\end{split}
\end{equation}
We multiply \eqref{sid_1} by $J v$ and integrate over $\Omega$ to obtain the identity
\begin{equation}
 I + II + III = IV,
\end{equation}
where 
\begin{equation}
 I = \int_\Omega  J \dt \left(\theta -\theta_{eq}^\beta \right) v,
\end{equation}
\begin{equation}
 II =     \int_\Omega  u \cdot \naba (\theta-\theta_{eq}^\beta) v J - \dt \bar{\eta} \tilde{d} \p_3 (\theta-\theta_{eq}^\beta) v,
\end{equation}
\begin{equation}
 III = \int_\Omega - \kappa \a_{jl} \p_l \left(\a_{jk} \p_k\left( \theta  -\theta_{eq}^\beta \right)\right)  Jv,
\end{equation}
and
\begin{equation}
 IV = \int_\Omega \left( \dt \bar{\eta} \tilde{d} K \p_3 \theta_{eq}^\beta  -u_j \a_{jk} \p_k\theta_{eq}^\beta +\kappa \a_{jl} \p_l \left(\a_{jk} \p_k \theta_{eq}^\beta \right) \right) v J.
\end{equation}

To prove the desired equality it then suffices to rewrite $III$ by integrating by parts and employing Lemma \ref{geometric_ids}.  From \eqref{id1} we find that 
\begin{equation}\label{sid_2}
\begin{split}
III &=  \int_\Omega - \kappa \p_l \left(J\a_{jl}   \a_{jk} \p_k\left( \theta  -\theta_{eq}^\beta \right)\right)  v \\
& = \int_\Omega \kappa J\a_{jl}   \a_{jk} \p_k\left( \theta  -\theta_{eq}^\beta \right)  \p_l v  \\
& \quad + \int_{\Sigma_+} - \kappa J\a_{j3}   \a_{jk} \p_k\left( \theta  -\theta_{eq}^\beta \right)   v + \int_{\Sigma_-}  \kappa J\a_{j3}   \a_{jk} \p_k\left( \theta  -\theta_{eq}^\beta \right)   v.
\end{split}
\end{equation}
Then we use \eqref{id3} and \eqref{id4}  to rewrite 
\begin{equation}\label{sid_3}
 \int_{\Sigma_+} - \kappa J\a_{j3}   \a_{jk} \p_k\left( \theta  -\theta_{eq}^\beta \right)   v = \int_{\Sigma_+} - \kappa  \n \cdot \naba\left( \theta  -\theta_{eq}^\beta \right)   v
\end{equation}
and
\begin{equation}\label{sid_4}
 \int_{\Sigma_-}  \kappa J\a_{j3}   \a_{jk} \p_k\left( \theta  -\theta_{eq}^\beta \right)   v  = \int_{\Sigma_-}  \kappa   \a_{3k} \p_k\left( \theta  -\theta_{eq}^\beta \right)   v  = \int_{\Sigma_-}  \kappa   K \p_3 \left( \theta  -\theta_{eq}^\beta \right)   v 
\end{equation}
When $\beta \in [0,\infty)^2$ we may use the boundary conditions of \eqref{geometric_ids} to compute 
\begin{equation}\label{sid_5}
\begin{split}
 \kappa \frac{\n_j}{|\n|} \a_{jk}\p_k (\theta-\theta_{eq}^\beta) & = -\beta_+(\theta-\theta_{eq}^\beta)- \beta_+ |\n| K (\bar{\theta}-\theta_{eq}^\beta) + \beta_+(\bar{\theta}-\theta_{eq}^\beta) \\
 &=  -\beta_+(\theta-\theta_{eq}^\beta)- \beta_+\left( |\n| K -1\right)(\bar{\theta}-\theta_{eq}^\beta)  
 \end{split}
\end{equation}
on $\Sigma_+$ and 
\begin{equation}\label{sid_6}
 \kappa \p_3 \left( \theta  -\theta_{eq}^\beta \right)  = \beta_- \left( \theta  -\theta_{eq}^\beta \right)
\end{equation}
on $\Sigma_-$.  Plugging \eqref{sid_5} and \eqref{sid_6} into \eqref{sid_3} and \eqref{sid_4} and then replacing in \eqref{sid_2}  then yields \eqref{sid_00} when $\beta \in [0,\infty)^2$.  The remaining cases may be handled with similar computations.

\end{proof}

\end{document}